\documentclass[11pt,leqno]{amsart}

\usepackage{epsfig}
\usepackage{amssymb}
\usepackage{amscd}
\usepackage[matrix,arrow]{xy}
\usepackage{graphicx}
\usepackage{xcolor}
\usepackage{enumitem}
\usepackage{mathtools}
\usepackage{comment}

\setlength{\textheight}{8.5in} \setlength{\textwidth}{6in}
\setlength{\topmargin}{0in} \setlength{\oddsidemargin}{0.25in}
\setlength{\evensidemargin}{0.25in}

\mathsurround=2pt

\raggedbottom
\bibcite{...}{...}

\newtheorem*{Thm*}{Theorem}
\newtheorem*{Prop*}{Proposition}
\newtheorem*{Cor*}{Corollary}
\newtheorem{Thm}{Theorem}[section]
\newtheorem{Prop}[Thm]{Proposition}

\newtheorem{Lem}[Thm]{Lemma}
\numberwithin{equation}{section}

\theoremstyle{definition}
\newtheorem{Conj}[Thm]{Conjecture}
\newtheorem{Def}[Thm]{Definition}
\newtheorem{eg}[Thm]{Example}
\newtheorem{Ques}[Thm]{Question}
\newtheorem{Rmk}[Thm]{Remark}

\def\N{\mathbb{N}}
\def\R{\mathbb{R}}
\def\C{\mathbb{C}}
\def\Z{\mathbb{Z}}
\def\Q{\mathbb{Q}}
\def\P{\mathbb{P}}
\def\H{\mathbb{H}}

\def\D{\mathcal{D}}
\def\O{\mathcal{O}}
\def\cC{\mathcal{C}}
\def\cM{\mathcal{M}}
\def\cA{\mathcal{A}}
\def\mc {\mathcal}

\def\Stab{\mathrm{Stab}}
\def\Aut{\mathrm{Aut}}

\def\Hom{\mathrm{Hom}}
\def\Ob{\mathrm{Ob}}
\def\GL{\mathrm{GL}}
\def\SL{\mathrm{SL}}
\def\Coh{\mathrm{Coh}}

\def\cat{\mathrm{cat}}

\def\cl{\mathrm{cl}}
\def\Mod{\mathrm{Mod}}
\def\Teich{\mathrm{Teich}}
\def\Cone{\mathrm{Cone}}
\def\Pic{\mathrm{Pic}}

\def\ra{\rightarrow}

\def\bs{\backslash}

\title[On pseudo-Anosov autoequivalences]{On pseudo-Anosov autoequivalences}

\author{Yu-Wei Fan}
\email{yuweifanx@gmail.com}
\author{Simion Filip}
\email{sfilip@math.uchicago.edu}
\author{Fabian Haiden}
\email{Fabian.Haiden@maths.ox.ac.uk}
\author{Ludmil Katzarkov}
\email{l.katzarkov@miami.edu}
\author{Yijia Liu}
\email{yijial@ksu.edu}

\subjclass[2010]{Primary 18E30; Secondary 16G20, 14F05, 32G15}
\keywords{triangulated categories, autoequivalences, Bridgeland stability conditions, pseudo-Anosov maps}

\begin{document}

\begin{abstract}
Motivated by results of Thurston, we prove that any autoequivalence of a triangulated category induces a filtration by triangulated subcategories, provided the existence of Bridgeland stability conditions.
The filtration is given by the exponential growth rate of masses under iterates of the autoequivalence, and only depends on the choice of a connected component of the stability manifold.
We then propose a new definition of pseudo-Anosov autoequivalences,
and prove that our definition is more general than the one previously proposed by Dimitrov, Haiden, Katzarkov, and Kontsevich.
We construct new examples of pseudo-Anosov autoequivalences on the derived categories of quintic Calabi--Yau threefolds and quiver Calabi--Yau categories.
Finally, we prove that certain pseudo-Anosov autoequivalences on quiver 3-Calabi--Yau categories  act hyperbolically on the space of Bridgeland stability conditions.
\end{abstract}

\maketitle

\section{Introduction}

Let $\Sigma_g$ be a closed orientable surface of genus $g\geq2$.
The mapping class group $\Mod(\Sigma_g)$ is the group of isotopy classes of orientation-preserving diffeomorphisms of $\Sigma_g$.
Thurston proved a far-reaching classification theorem for elements of $\Mod(\Sigma_g)$, showing that each must be either periodic, reducible, or pseudo-Anosov \cite{Th}.
Among these three types, pseudo-Anosov maps are dynamically the most interesting ones.
Moreover, a generic element in $\Mod(\Sigma_g)$ is pseudo-Anosov \cite{Rivin}.
Pseudo-Anosov maps are pervasive throughout low-dimensional topology, geometry, and dynamics.
For instance, a classical theorem of Thurston \cite{Th2} says that the mapping torus constructed from a diffeomorphism $f\colon \Sigma_g\ra\Sigma_g$ has a hyperbolic structure if and only if $f$ is isotopic to a pseudo-Anosov map.

More recently, a striking series of papers by Gaiotto--Moore--Neitzke \cite{GMN},
Bridgeland--Smith \cite{BS}, Haiden--Katzarkov--Kontsevich \cite{HKK}, and Dimitrov--Haiden--Katzarkov--Kontsevich \cite{DHKK},
established connections between Teichm\"uller theory and the theory of Bridgeland stability conditions on triangulated categories.
A series of analogies emerged, such as the one between geodesics (for flat metrics) and stable objects,
with lengths of the former giving the masses of the later.
Further correspondences are summarized in Table~\ref{Table}.
Based on these connections, the categorical analogue of the notion of systoles on Riemann surfaces has been studied in  \cite{Fan2, Hai}.

\begin{table}[h!] \label{Table}
\begin{tabular}{c|c}
Surface & Triangulated category \\ \hline\hline
diffeomorphisms & autoequivalences \\ \hline
closed curve $C$ & object $E$ \\ \hline
intersections $C_1\cap C_2$  & morphisms $\Hom^\bullet(E_1, E_2)$ \\ \hline
flat metrics & stability conditions \\ \hline
geodesics & stable objects \\ \hline
length($C$) & mass($E$) \\ \hline
slope($C$) & phase($E$)
\end{tabular}
\vspace{5pt}
\caption{Analogies between smooth surfaces and triangulated categories.}
\end{table}

Since generic elements in mapping class groups are of pseudo-Anosov type,
it is then natural to ask, under the correspondence  described in Table \ref{Table},
\begin{center}
\vspace{5pt}
What is the categorical analogue of pseudo-Anosov maps?
\vspace{5pt}
\end{center}
In other words,
\begin{center}
\vspace{5pt}
What properties are satisfied by  \emph{generic} autoequivalences on triangulated categories?
\vspace{5pt}
\end{center}

We study the categorifications of several different characterizations of pseudo-Anosov maps in this article.
Recall that a map $f\colon\Sigma_g\ra\Sigma_g$ is called \emph{pseudo-Anosov} if there exists a pair of transverse measured foliations on $\Sigma_g$ and a real number $\lambda>1$ such that
the foliations are preserved by $f$ and their transverse measures are multiplied by $\lambda$ and $1/\lambda$ respectively.
The number $\lambda$  is called the \emph{stretch factor} of the pseudo-Anosov map.
Thurston gives another characterization of pseudo-Anosov maps:

\begin{Thm}[Thurston \cite{Th}, Theorem 5] \label{Thm:Thurston}
For any diffeomorphism $f$ of a surface $\Sigma$, there is a finite set of algebraic integers
$1\leq\lambda_1<\lambda_2<\cdots<\lambda_k$ such that for any homotopically nontrivial simple closed curve $\alpha$,
there is a $\lambda_i$ such that for any Riemannian metric $g$ on $\Sigma$,
$$
\lim_{n\ra\infty}(l_g(f^n\alpha))^{1/n}=\lambda_i.
$$
Here $l_g$ denotes the length of a shortest representative in the homotopy class.
Moreover, $f$ is isotopic to a pseudo-Anosov map if and only if $k=1$ and $\lambda_1>1$.
In this case, $\lambda_1$ is the stretch factor of the pseudo-Anosov map.
\end{Thm}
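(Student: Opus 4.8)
The plan is to reduce the assertion to the Nielsen--Thurston classification and then compute the growth rate in each of its three cases. The first observation is that the limit $\lim_{n\to\infty}(l_g(f^n\alpha))^{1/n}$ is independent of $g$: any two Riemannian metrics on the compact surface $\Sigma$ are bi-Lipschitz equivalent, so $l_{g_1}$ and $l_{g_2}$ differ by a multiplicative constant that is uniform over all homotopy classes, and this constant disappears after taking $n$-th roots. By the same token one may replace $l_g(\gamma)$ by any uniformly comparable quantity, for instance the word length of $\gamma$ in $\pi_1(\Sigma)$ or the total geometric intersection number $\sum_{j=1}^{m} i(\gamma,\beta_j)$ with a fixed filling system of simple closed curves $\beta_1,\dots,\beta_m$. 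It therefore suffices to analyze the action of $f$ on geometric intersection numbers, i.e.\ the piecewise-linear action of $f$ on the space of measured laminations $\mathcal{ML}(\Sigma)$; in particular the limit will be seen to exist as a by-product of the case analysis.

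Invoke now the trichotomy: up to isotopy, $f$ is periodic, reducible, or pseudo-Anosov. If $f$ is periodic, some power $f^{N}$ is isotopic to the identity, so $f^{n}\alpha$ runs through only finitely many isotopy classes and $l_g(f^{n}\alpha)$ stays bounded; every curve then has growth rate $1$, hence $k=1$ and $\lambda_1=1$. If $f$ is pseudo-Anosov, equip $\Sigma$ with the flat metric $q$ of the invariant quadratic differential, with invariant measured foliations $\mathcal{F}^{s},\mathcal{F}^{u}$ whose transverse measures are multiplied by $\lambda$ and $\lambda^{-1}$ respectively under $f$. Since $\mathcal{F}^{s}\cup\mathcal{F}^{u}$ fills $\Sigma$, both $i(\alpha,\mathcal{F}^{s})$ and $i(\alpha,\mathcal{F}^{u})$ are positive for every essential simple closed curve $\alpha$, so $l_q(f^n\alpha)\asymp i(f^n\alpha,\mathcal{F}^{s})+i(f^n\alpha,\mathcal{F}^{u})\asymp\lambda^{n}$; comparing $q$ with an arbitrary Riemannian metric, every curve has growth rate exactly $\lambda$, so $k=1$ and $\lambda_1=\lambda>1$, the stretch factor by definition.

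The reducible case is where the real work lies, and where I expect the main obstacle. Here some power $f^{m}$ fixes, up to isotopy, an essential multicurve $C$ whose complementary subsurfaces $\Sigma_1,\dots,\Sigma_r$ are permuted by $f^{m}$ and, after passing to a further power, are individually preserved with the induced maps periodic or pseudo-Anosov, say with stretch factors $\mu_1,\dots,\mu_s>1$ on the pseudo-Anosov pieces. Placing $\alpha$ in quasi-transverse position with respect to $C$ and tracking its iterates piece by piece, one shows that the growth rate of $\alpha$ equals the largest of the $\mu_j^{1/m}$ over the pseudo-Anosov pieces that $\alpha$ essentially enters, and equals $1$ if $\alpha$ enters no such piece; the Dehn twists performed by $f^{m}$ along the curves of $C$ inflate intersection numbers only linearly in $n$, which contributes nothing to the exponential rate. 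This exhibits the spectrum as the finite set $\{1,\mu_1^{1/m},\dots,\mu_s^{1/m}\}$ (after discarding repetitions and relabeling in increasing order), and each $\mu_j$ is an algebraic integer because it is the Perron--Frobenius eigenvalue of the integral transition matrix of an $f^{m}$-invariant train track. Finally, for the ``moreover'': a periodic $f$ gives $k=1$, $\lambda_1=1$; a reducible $f$ has a component of its reducing multicurve that is fixed up to isotopy by a power of $f$ and hence has growth rate $1$, again forcing $\lambda_1=1$. So $k=1$ together with $\lambda_1>1$ excludes both the periodic and the reducible alternatives, whence $f$ is isotopic to a pseudo-Anosov map and, by the previous paragraph, $\lambda_1$ is its stretch factor.
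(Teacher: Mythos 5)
This statement is quoted from Thurston's paper \cite{Th} and is not proved in the present article, so there is no in-paper argument to compare against. Your sketch is, in substance, the standard proof of Thurston's theorem and is essentially correct: metric-independence via bi-Lipschitz equivalence, reduction to geometric intersection numbers with a filling system, the Nielsen--Thurston trichotomy, the quadratic-differential estimate $l_q(f^n\alpha)\asymp i(f^n\alpha,\mathcal F^s)+i(f^n\alpha,\mathcal F^u)$ in the pseudo-Anosov case, the canonical reduction system with only linear inflation from the boundary twisting in the reducible case, and Perron--Frobenius eigenvalues of integral train-track matrices for algebraicity are all the right ingredients. Two small points deserve more care in a full write-up: in the reducible case the relevant set of pseudo-Anosov pieces is the one met by the whole $f$-orbit $\alpha, f\alpha,\dots,f^{m-1}\alpha$ (since $f$ permutes the pieces and one computes the rate for $f$, not $f^m$, via $n=qm+r$), and the existence of the honest limit, rather than just a $\limsup$, in the reducible case requires the uniform two-sided intersection-number estimates you allude to but do not carry out.
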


There is yet another characterization of pseudo-Anosov maps.
Let $(X,d)$ be a metric space and $f\colon X\ra X$ be an isometry.
The \emph{translation length} of $f$ is defined by
\[
\tau(f) \coloneqq \inf_{x\in X} \{d(x, f(x)) \}.
\]
The isometry $f$ is said to be \emph{hyperbolic} if $\tau(f)>0$ and
$\tau(f)=d(x, f(x))$ for some $x\in X$.
Now let $f\in\Mod(\Sigma_g)$ and recall that $f$ acts on Teichm\"uller space $\Teich(\Sigma_g)$ as an isometry for the Teichm\"uller metric.
Moreover, $f$ is pseudo-Anosov if and only if $f$ acts hyperbolically on $\Teich(\Sigma_g)$.
In this case, the translation length and the stretch factor are related by
$\tau(f) = \log\lambda(f)$.

Motivated by the definition of pseudo-Anosov maps and the correspondence in Table \ref{Table},
Dimitrov, Haiden, Katzarkov, and Kontsevich \cite{DHKK}
proposed  a definition of ``pseudo-Anosov autoequivalence" of a triangulated category.
We recall their definition in Section \ref{pA:auto} and will refer to such autoequivalences as ``DHKK pseudo-Anosov autoequivalences'' throughout this article.
Some properties of these autoequivalences are studied in a recent work of Kikuta \cite{Kikuta}.
In particular, it was proved in \cite{Kikuta} that DHKK pseudo-Anosov autoequivalences act hyperbolically on $\Stab^\dagger(\D)/\C$,
and also satisfy the relation $\tau(f)=\log\lambda(f)$ between translation lengths and stretch factors.

However, a drawback of the definition of DHKK pseudo-Anosov autoequivalences, which we will discuss in Section \ref{eg:quiver}, is that it is too restrictive in some cases.
For instance, the set of DHKK pseudo-Anosov autoequivalences does not contain the autoequivalences 
induced by certain pseudo-Anosov maps 
constructed by Thurston \cite{Th} which act trivially on the homology of $\Sigma_g$.

We propose a new definition of \emph{pseudo-Anosov autoequivalences} on triangulated categories.
Our definition is motivated by the characterization given by Theorem \ref{Thm:Thurston}.
We show in Section \ref{sec:fil} that any autoequivalence on a triangulated category $\D$ induces a filtration by triangulated subcategories,
provided the following two assumptions on $\D$: \\
\textbf{Assumption (A)}: There exists a Bridgeland stability condition on $\D$. \\
\textbf{Assumption (B)}: Under Assumption (A), a connected component $\Stab^\dagger(\D)\subset\Stab(\D)$ is fixed once and for all.

\emph{Throughout the article, we consider triangulated categories satisfying these two assumptions.}
The filtration of $\D$ that we associated to an autoequivalence is the categorical analogue of the growth rates of lengths 
$1\leq\lambda_1<\lambda_2<\cdots<\lambda_k$
in Theorem \ref{Thm:Thurston}.
We then make the following definition.

\begin{Def}[see Definition \ref{Def:pAauto}] \label{Def:pAintro}
Let $\D$ be a triangulated category satisfying Assumptions (A) and (B).
An autoequivalence $\Phi\colon\D\ra\D$ is said to be \emph{pseudo-Anosov} if its associated growth filtration has only one step:
$$
0\subset\D_{\lambda}=\D \text{ and }\lambda>1.
$$
\end{Def}

We prove that this new definition is more general than the definition of DHKK pseudo-Anosov autoequivalences.

\begin{Thm}[see Theorem \ref{Thm:more_general}]
Let $\D$ be a triangulated category satisfying Assumptions (A) and (B),
and let $\Phi\in\Aut(\D)$ be an autoequivalence.
If $\Phi$ is DHKK pseudo-Anosov, then it is pseudo-Anosov.
\end{Thm}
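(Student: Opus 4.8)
The plan is to unwind what it means for $\Phi$ to be DHKK pseudo-Anosov, and then to compute the mass growth rate of \emph{every} nonzero object of $\D$ directly, showing it equals the stretch factor $\lambda$; since $\lambda>1$ by hypothesis, the growth filtration of Section~\ref{sec:fil} then collapses to the single step $0\subsetneq\D_\lambda=\D$, which is exactly Definition~\ref{Def:pAintro}. By the definition recalled in Section~\ref{pA:auto}, being DHKK pseudo-Anosov supplies a stability condition $\sigma=(Z,\mathcal P)\in\Stab^\dagger(\D)$ and an element $g\in\widetilde{GL}^+(2,\R)$ whose image in $GL^+(2,\R)$ is conjugate to $\mathrm{diag}(\lambda,\lambda^{-1})$, with $\Phi\cdot\sigma=\sigma\cdot g$. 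Since the growth filtration depends only on the component $\Stab^\dagger(\D)$, I would compute all mass growth rates with respect to this particular $\sigma$.

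The first step is to extract the structural consequences of $\Phi\cdot\sigma=\sigma\cdot g$. Iterating gives $\Phi^n\cdot\sigma=\sigma\cdot g^n$, and comparing central charges yields $Z(\Phi^nX)=g^{n}Z(X)$ for every object $X$ (fixing a convention for the $\widetilde{GL}^+(2,\R)$-action; the other convention just replaces $g$ by $g^{-1}$). Moreover the $\widetilde{GL}^+(2,\R)$-action does not change which objects are semistable, so $\Phi$ carries the set of $\sigma$-semistable objects bijectively onto itself; and if $A_1,\dots,A_k$ are the Harder--Narasimhan factors of a nonzero $E$ with respect to $\sigma$, then $\Phi^nA_1,\dots,\Phi^nA_k$ are again $\sigma$-semistable, with $\sigma$-phases reparametrised by the increasing homeomorphism of $\R$ attached to $g^n$ and hence still strictly decreasing, so they are precisely the Harder--Narasimhan factors of $\Phi^nE$. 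This gives the clean formula
\[
m_\sigma(\Phi^nE)=\sum_{i=1}^{k}\bigl|g^{n}Z(A_i)\bigr|.
\]
The upper bound is then immediate: as $g$ is conjugate to $\mathrm{diag}(\lambda,\lambda^{-1})$ one has $\|g^{n}\|\le C\lambda^{n}$, whence $m_\sigma(\Phi^nE)\le C\lambda^n m_\sigma(E)$, so every nonzero object has growth rate at most $\lambda$, and it remains only to prove the matching lower bound, i.e.\ that $\D_{<\lambda}=0$.

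For the lower bound, decompose $\C=\R^2=L^+\oplus L^-$ into the eigenlines of $g$ (eigenvalues $\lambda$ and $\lambda^{-1}$) and write $Z(A_i)=v_i^++v_i^-$; then $|g^nZ(A_i)|=|\lambda^nv_i^++\lambda^{-n}v_i^-|$ is comparable to $\lambda^n$ as soon as $v_i^+\ne0$, i.e.\ as soon as $Z(A_i)\notin L^-$. The decisive input is that this occurs for at least one $i$, which follows from: \emph{no nonzero $\sigma$-semistable object has central charge on the eigenline $L^-$ contracted by the forward iterates $\Phi^n$}. Granting this, choose such an $A_i$; then $m_\sigma(\Phi^nE)\ge|g^nZ(A_i)|\ge c\lambda^n$ for $n\gg0$, so the growth rate of $E$ is at least $\lambda$, and combined with the upper bound every nonzero object has growth rate exactly $\lambda>1$, so the growth filtration is $0\subsetneq\D_\lambda=\D$ and $\Phi$ is pseudo-Anosov in the sense of Definition~\ref{Def:pAintro}. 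The main obstacle is exactly this italicised claim: if some nonzero $\sigma$-semistable $A$ had $Z(A)\in L^-$, then $m_\sigma(\Phi^nA)=\lambda^{-n}|Z(A)|\to0$, so $A$ would have growth rate $\lambda^{-1}<\lambda$ and would lie in a nonzero proper step $\D_{<\lambda}$. This is the categorical shadow of the elementary fact that an essential simple closed curve meets each of the (minimal) invariant foliations of a pseudo-Anosov map with positive intersection number, and it is where the precise form of the DHKK definition must be used: the intertwining $Z\circ\Phi_*=g\circ Z$ on the finite-rank lattice through which $\sigma$ factors forces $\lambda$ to be an eigenvalue of the integral automorphism $\Phi_*$, hence an algebraic unit with $\lambda\ne\pm1$, so its eigen-directions are irrational with respect to the image lattice and no nonzero class — in particular no nonzero semistable object — can be sent into $L^-$ by $Z$. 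Once this is secured, the remaining steps (the upper bound, the $\Phi$-invariance of Harder--Narasimhan filtrations, and the final identification with Definition~\ref{Def:pAintro}) are routine, and I anticipate no serious difficulty there.
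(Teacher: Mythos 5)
Your overall strategy is the one the paper uses: from $\Phi\cdot\sigma=\sigma\cdot g$ you correctly deduce that $\Phi$ permutes the $\sigma$-semistable objects preserving the order of phases, hence sends Harder--Narasimhan filtrations to Harder--Narasimhan filtrations, so that $m_\sigma(\Phi^nE)=\sum_i|Z(\Phi^nA_i)|$ and everything reduces to showing that each semistable $A$ has $|Z(\Phi^nA)|\sim\lambda^n$ rather than $\lambda^{-n}$. The gap is in how you rule out the second alternative. You argue that $\lambda$ is an irrational algebraic unit, so the eigen-directions of $\Phi_*$ are ``irrational with respect to the image lattice'' and therefore no nonzero class lands in $L^-$ under $Z$. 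This implication is not valid. The lattice $\Gamma$ may have rank greater than $2$, in which case $Z^{-1}(L^-)$ is a $\Phi_*$-invariant \emph{hyperplane} in $\Gamma\otimes\R$ (containing $\ker Z$), not an eigenline, and irrationality of the eigenline $L^-\subset\R^2$ does not prevent this hyperplane from containing nonzero integral classes with $Z\neq0$. For a concrete linear-algebra obstruction: take $\Gamma=\Z^4$, $\Phi_*=\mathrm{diag}(M,M)$ for a hyperbolic $M\in\SL(2,\Z)$ with contracting eigenvector $(1,\beta)$, and $Z(v,w)=v+\beta w$; then $Z\circ\Phi_*=M\circ Z$, yet the quadratic relation satisfied by $\beta$ produces nonzero $(v,w)\in\Z^4$ with $Z(v,w)\in L^-\setminus\{0\}$. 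So irrationality alone cannot carry the argument; you would first have to know that such a class is not the class of a semistable object, which is exactly what needs proving. (Your phrase ``no nonzero class'' is also literally false whenever $\ker Z$ meets the lattice nontrivially, though those classes are at least not semistable.)

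The missing ingredient is axiom (5) of Definition \ref{def:stab}, the support property, and it replaces your entire arithmetic discussion by one line: if some semistable $A$ had $Z(A)\in L^-$, then the objects $\Phi^nA$ would be semistable with $|Z(\Phi^nA)|=\lambda^{-n}|Z(A)|\to0$, whence $\|\cl([\Phi^nA])\|\le C\,|Z(\Phi^nA)|\to0$; but the $\cl([\Phi^nA])$ are non-torsion elements of the finite-rank group $\Gamma$ (non-torsion because $Z$ is nonzero on them), so their norms are bounded below by a positive constant --- a contradiction. This is the paper's argument, and it is also the correct categorical shadow of the geometric fact you allude to: the support property is what forbids semistable classes from being asymptotically contracted. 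With that substitution the rest of your proof (the upper bound via $\|g^n\|\le C\lambda^n$, the invariance of HN filtrations, and the identification of the growth filtration with $0\subsetneq\D_\lambda=\D$) goes through as written.
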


Note that there is a distinguished choice of connected component of $\Stab(\D)$ for several examples of triangulated categories $\D$ that admit Bridgeland stability conditions.
In this case, we choose $\Stab^\dagger(\D)$ in Assumption (B) to be the distinguished connected component.
Examples of such categories $\D$ include:
\begin{itemize}
\item The bounded derived category of coherent sheaves on a smooth complex projective variety $X$, where $X$ is a curve, surface, abelian threefold, Fano threefold, or a quintic threefold \cite{Bri,MacriCurve,BriK3,AB13,BMT14,BMS16,MP15,BMSZ17,Li}.
For these triangulated categories, there is a distinguished connected component of $\Stab(\D^b\Coh(X))$ which contains \emph{geometric stability conditions}.
\item $\D(\Gamma_NQ)$, the finite dimensional derived category of the $N$-Calabi--Yau Ginzburg dg algebra $\Gamma_NQ$ associated to an acyclic quiver $Q$, where $N\geq2$ is an integer \cite{Gin,Keller,KellerSurvey}.
For these triangulated categories, there is a distinguished connected component of $\Stab(\D(\Gamma_NQ))$ which contains stability conditions whose heart $P((0,1[)$ coincides with the canonical heart $\mathcal{H}_{\Gamma_NQ}\subset\D(\Gamma_NQ)$ associated to the quiver.
See Section \ref{eg:quiver} for more details in the case of $A_2$-quiver.
\end{itemize}

We construct new examples of \emph{weak} pseudo-Anosov autoequivalences on the derived categories of coherent sheaves on quintic threefolds and on certain quiver Calabi--Yau categories (see Definition~\ref{Def:pAauto} for the definition of \emph{weak pseudo-Anosov autoequivalences}).
Note that it is not known whether there exists any DHKK pseudo-Anosov autoequivalence on a Calabi--Yau category of dimension greater than one.

\begin{Thm}[see Theorem \ref{Thm:pA_quiver} and Proposition \ref{Prop:no_DHKK}]
Let $\D(\Gamma_NA_2)$ be the $N$-Calabi--Yau category associated to the $A_2$-quiver,
where $N\geq3$ is an odd integer.
Then
\begin{itemize}
	\item Any composition of spherical twists $T_1$ and $T_2^{-1}$ that is neither $T_1^a$ nor $T_2^{-b}$ is a weak pseudo-Anosov autoequivalence of $\D(\Gamma_NA_2)$.
	\item There is no DHKK pseudo-Anosov autoequivalence of $\D(\Gamma_NA_2)$.
\end{itemize}
\end{Thm}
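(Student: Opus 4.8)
The plan is to reduce both statements to elementary facts about the induced action $\Phi_*$ on $K_0(\D(\Gamma_NA_2))\cong\Z^2$; since by Section~\ref{sec:fil} the growth filtration of $\Phi$ depends only on the component $\Stab^\dagger$, it may be computed with any convenient stability condition. For the first statement, because $N$ is odd the Euler form on $K_0=\Z[S_1]\oplus\Z[S_2]$ is the standard symplectic form, so the twist formula $[T_SE]=[E]-\chi(S,E)[S]$ shows that $T_1$ and $T_2^{-1}$ act on $K_0$ by $\left(\begin{smallmatrix}1&-1\\0&1\end{smallmatrix}\right)$ and $\left(\begin{smallmatrix}1&0\\-1&1\end{smallmatrix}\right)$. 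These are conjugate to the generators $\left(\begin{smallmatrix}1&1\\0&1\end{smallmatrix}\right),\left(\begin{smallmatrix}1&0\\1&1\end{smallmatrix}\right)$ of a free rank-two submonoid of $\SL_2(\Z)$, and $\Phi_*$ is then either a power of one of them --- exactly when $\Phi$ equals $T_1^a$ or $T_2^{-b}$ --- or else a hyperbolic matrix (trace $\ge 3$ in absolute value) with irrational eigenvalues $\lambda>1>\lambda^{-1}$. Fix $\sigma\in\Stab^\dagger$ with heart $\mathcal H_{\Gamma_NA_2}$. If $E$ is indecomposable then $[E]\ne 0$ in $K_0$ (every indecomposable of $\D(\Gamma_NA_2)$ is reachable and rigid, with nonzero class), and since the eigenlines of $\Phi_*$ are irrational $[E]$ has nonzero component along the expanding eigenline of $\Phi_*^{-1}$; hence $m_\sigma(\Phi^nE)\ge|Z_\sigma(\Phi_*^{-n}[E])|$ grows at exponential rate $\lambda$. (Without invoking the structure of indecomposables one argues instead that for $\mu<\log\lambda$ the subcategory $\D_\mu$ of the growth filtration is a proper $\Phi$-invariant thick subcategory, of which $\D(\Gamma_NA_2)$ admits none nonzero, since $\Phi_*$ fixes no rational line.)

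For the matching upper bound I would iterate the defining triangle $\Hom^\bullet(S_i,-)\otimes S_i\to(-)\to T_i^{\pm1}(-)$, giving $m_\sigma(\Phi^{k+1}E)\le m_\sigma(\Phi^kE)+\dim\Hom^\bullet(S_{i_k},\Phi^kE)\cdot m_\sigma(S_{i_k})$, and use that in $\D(\Gamma_NA_2)$ the dimensions $\dim\Hom^\bullet(S_i,\Phi^kE)$ grow at most linearly in $\|\Phi_*^k[E]\|$, so that $m_\sigma(\Phi^nE)\le C\lambda^n$; equivalently, this is the Gromov--Yomdin-type identity $h(\Phi)=\log\rho(\Phi_*)$ for these categories together with the general bound that mass growth is at most the categorical entropy. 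Since a general object is a direct sum of indecomposables, it follows that \emph{every} nonzero object has exponential mass-growth rate exactly $\log\lambda>0$, so the growth filtration of $\Phi$ is $0\subset\D_\lambda=\D$ with $\lambda>1$, and $\Phi$ is pseudo-Anosov. By contrast $T_1^a$ and $T_2^{-b}$ act unipotently on $K_0$, their masses grow only polynomially, and the single step of their filtration has $\lambda=1$ --- which is exactly why the statement excludes them. I expect this upper bound --- having in hand the control of $\Hom$-dimensions, equivalently the categorical entropy computation, for $\D(\Gamma_NA_2)$ --- to be the main technical point.

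For the second statement, suppose for contradiction that $\Phi$ is DHKK pseudo-Anosov: by the definition recalled in Section~\ref{pA:auto} there are $\sigma\in\Stab^\dagger$, a real number $\lambda>1$, and a lift $g$ of a hyperbolic element $\bar g\in\GL_2^+(\R)$ with eigenvalues $\lambda,\lambda^{-1}$ such that $\Phi\cdot\sigma=g\cdot\sigma$. The heart $\mathcal P_\sigma((0,1])$ is equivalent to $\mathrm{rep}(A_2)$, whose indecomposables are the two simples $A,B$ and the nonsplit extension $P$, so $\sigma$ has at most three stable objects up to shift. Now $\Phi$ carries $\sigma$-stable objects to $(g\cdot\sigma)$-stable objects, which are again the $\sigma$-stable objects since the $\GL_2^+(\R)$-action only relabels phases; hence $\Phi$ permutes this finite set, and comparing phases shows that the discrete set $D\subset\R$ of phases of $\sigma$-stable objects (and their shifts) is invariant under the action of $\bar g$ on the circle of phases. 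Since $\bar g$ acts there with hyperbolic North--South dynamics whose fixed set consists of the phases lying on the two eigenlines of $\bar g$, a discrete invariant set must lie in those (at most two, modulo $\Z$) fixed phase values. Thus the phases of $A,B,P$ take at most two values; but if $P$ were stable its phase would lie strictly between those of $A$ and $B$, requiring a third value, so $P$ is unstable and $A,B$ are the only stable objects. Moreover $\phi(A)\ne\phi(B)$ --- otherwise the whole heart would be $\sigma$-semistable of one phase and the support property would fail --- so $\{\phi(A),\phi(B)\}$ is exactly the set of $\bar g$-fixed phase values, i.e.\ $Z_\sigma([A])$ and $Z_\sigma([B])$ lie on the two eigenlines of $\bar g$. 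Finally, $\Phi\cdot\sigma=g\cdot\sigma$ forces $\bar g$ to be conjugate, via the real isomorphism $Z_\sigma\colon K_0\otimes\R\iso\C$, to $\Phi_*^{\pm1}$, so these eigenlines pull back to the eigenlines of $\Phi_*$; hence the primitive lattice vectors $[A],[B]$ are eigenvectors of the integer matrix $\Phi_*\in\GL_2(\Z)$ for the eigenvalues $\lambda^{\pm1}$. This is impossible, since a matrix in $\GL_2(\Z)$ with an eigenvalue greater than $1$ has irrational eigenvalues and an integer matrix has no rational eigenvector for an irrational eigenvalue. Therefore $\D(\Gamma_NA_2)$ admits no DHKK pseudo-Anosov autoequivalence.
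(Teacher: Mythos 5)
Your overall strategy for the first bullet matches the paper's --- reduce to the $K_0$-action, identify the spectral radius $\lambda$, obtain the lower bound from $m_\sigma(\Phi^nE)\geq|Z_\sigma(\Phi^nE)|$ together with the irrationality of the eigenvalues, and obtain the upper bound by reducing to a split generator via Ikeda's theorem --- but the step you yourself flag as ``the main technical point,'' namely $m_\sigma(\Phi^n(S_1\oplus S_2))\leq C\lambda^n$, is precisely where the paper's real work lies, and your proposed substitutes do not hold up. The claim that $\dim\Hom^\bullet(S_i,\Phi^kE)$ is controlled linearly by $\|\Phi_*^k[E]\|$ is false: $\Hom$-dimensions are not governed by $K_0$-classes (already $\dim\Hom^\bullet(S_1,S_1\oplus S_1[1])=4$ while $[S_1\oplus S_1[1]]=0$). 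The appeal to a Gromov--Yomdin-type identity $h_\cat(\Phi)=\log\rho(\Phi_*)$ is circular: no such identity is available off the shelf for $\D(\Gamma_NA_2)$, and proving it for these $\Phi$ is essentially the bound you need. The paper instead tracks the semistable factors themselves, not just their classes: Lemma~\ref{lem:sphericaltwist} shows $T_1$ and $T_2^{-1}$ send $S_1,S_2$ to iterated extensions of copies of $S_1,S_2$ shifted by multiples of $N-1$, so $\Phi^n(S_1\oplus S_2)$ is an iterated extension of objects $S_i^{\oplus q}[(N-1)j]$ with multiplicities computed by the nonnegative matrix $M_\Phi^n$, and Lemma~\ref{lem:swapHNfactors} shows (using $N\geq3$) that this tower reorders into the Harder--Narasimhan filtration without cancellation. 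This is where the hypotheses that $\Phi$ is a \emph{positive} word in $T_1,T_2^{-1}$ and that $N\geq3$ enter; your upper bound never uses them, which signals the gap. Two smaller points: your lower bound for general $E$ rests on the unproved assertion that every indecomposable of $\D(\Gamma_NA_2)$ has nonzero class, and the alternative you offer does not follow ($\D_\mu$ being proper and invariant contradicts nothing, since the $K_0$-image of a thick subcategory need not be a $\Phi_*$-invariant line --- it could be $0$ or all of $K_0$); also your matrices for $T_1,T_2^{-1}$ have the wrong sign (with $\chi(S_1,S_2)=-1$ one gets $\left(\begin{smallmatrix}1&1\\0&1\end{smallmatrix}\right)$ and $\left(\begin{smallmatrix}1&0\\1&1\end{smallmatrix}\right)$), though this does not affect the spectral radius.

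For the second bullet your argument is genuinely different from the paper's and essentially sound. Proposition~\ref{Prop:no_DHKK} argues that, the set of stable objects up to shift being finite and $\Phi$-invariant, some power $\Phi^N$ fixes them all up to even shift and hence acts trivially on $K_0$ (stable classes generate), contradicting the stretching condition; you instead run North--South dynamics on the circle of phases to force the stable classes onto the eigenlines of $\Phi_*$ and then contradict the irrationality of those eigenlines. Both routes rely on the same unstated input --- that every $\sigma$ in the distinguished component has finitely many (in fact at most three) stable objects up to shift; your identification of $P_\sigma((0,1])$ with $\mathrm{rep}(A_2)$ holds for the canonical heart but for general $\sigma$ requires the classification of hearts in $\Stab^\dagger(\D(\Gamma_NA_2))$ --- and yours additionally needs a word on why $Z_\sigma$ is a real isomorphism $K_0\otimes\R\to\C$. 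The paper's version is shorter and bypasses the phase analysis entirely.
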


We also prove that certain ``palindromic" pseudo-Anosov autoequivalences of $\D(\Gamma_NA_2)$ act hyperbolically on $\Stab^\dagger(\D(\Gamma_NA_2))/\C$ and satisfy the relation between translation lengths and stretch factors
(see Theorem \ref{Thm:hyperbolic} for the precise statement).

\begin{Thm}[see Theorem \ref{Thm:pA_quintic}]
Let $X$ be a quintic Calabi--Yau hypersurface in $\C\P^4$.
Then
\[
\Phi:=T_\O\circ(-\otimes\O(-1))
\]
is a weak pseudo-Anosov autoequivalence on $\D^b(X)$.
Here $T_\O$ denotes the spherical twist with respect to the structure sheaf $\O_X$.
\end{Thm}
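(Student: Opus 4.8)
The plan is to verify Definition~\ref{Def:pAintro} directly: we must show that the growth filtration associated to $\Phi = T_\O\circ(-\otimes\O(-1))$ on $\D^b(X)$, computed with respect to the distinguished component $\Stab^\dagger(\D^b(X))$ containing geometric stability conditions (which exists by \cite{Li}), has a single step $0\subset\D_\lambda = \D^b(X)$ with $\lambda > 1$. Since the filtration depends only on the component, it suffices to estimate mass growth rates for a single conveniently chosen $\sigma\in\Stab^\dagger(\D^b(X))$. First I would record the action of $\Phi$ on the numerical Grothendieck group $\mathcal{N}(X)\cong\Z^4$ (generated by powers of the hyperplane class, or equivalently by $\ch_0,\ch_1,\ch_2,\ch_3$): tensoring by $\O(-1)$ is a unipotent shear, and the spherical twist $T_\O$ acts by $v\mapsto v + \langle \O, v\rangle\,[\O]$ where $\langle\,,\,\rangle$ is the Euler pairing. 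The composite is an explicit integer matrix $M_\Phi\in\GL_4(\Z)$; I would compute its characteristic polynomial and show it has a real eigenvalue $\lambda>1$ (equivalently, spectral radius $>1$). This gives the lower bound: for any $\sigma$, $\liminf_n (m_\sigma(\Phi^n E))^{1/n}\geq \lambda$ for a generic class $E$, because masses dominate (up to a constant depending on $\sigma$) the norm of the central charge, hence the norm of $M_\Phi^n[E]$, which grows like $\lambda^n$.

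The matching upper bound — that \emph{every} object has mass growth at most $\lambda$ — is the main obstacle, and it is where one uses the geometric nature of the chosen stability condition together with the explicit form of $\Phi$. Here I would exploit that $-\otimes\O(-1)$ acts on $\Stab^\dagger$, and $T_\O$ acts on $\Stab^\dagger$, so $\Phi$ preserves the distinguished component; thus for a fixed $\sigma_0$, the points $\Phi^{-n}\sigma_0$ stay in $\Stab^\dagger$. One then compares $m_{\sigma_0}(\Phi^n E) = m_{\Phi^{-n}\sigma_0}(E)$ and needs to control how fast $m_{\tau}(E)$ can grow as $\tau$ ranges over the orbit. The cleanest route is to invoke the general structure from Section~\ref{sec:fil}: the top step $\lambda_k$ of the growth filtration is always bounded above by the spectral radius of the induced action on $K$-theory when the category is of "finite type" / when mass is controlled by the central charge — more precisely, one shows $m_\sigma(\Phi^n E)\leq C(\sigma,E)\cdot \rho(M_\Phi)^n\cdot n^{d}$ for some polynomial factor, using that in a geometric stability condition the mass of any object is comparable to a fixed norm of its Chern character (this is the Calabi--Yau analogue of the surface case, and follows from support-property type estimates valid on $\Stab^\dagger$). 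Since $\rho(M_\Phi) = \lambda$ by the first step, the polynomial factor washes out under the $n$-th root and we get the upper bound $\lambda$.

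Granting both bounds, every nonzero object $E$ satisfies $\lim_n (m_\sigma(\Phi^n E))^{1/n} = \lambda$ provided $[E]$ is not annihilated by the spectral projection onto the $\lambda$-eigenspace; to finish I must handle objects whose class lies in a $\Phi$-invariant subspace with smaller spectral radius (e.g.\ torsion classes, or classes in the kernel of the relevant projector). I would argue that no proper nonzero triangulated subcategory of $\D^b(X)$ is closed under $\Phi$ and compatible with the numerical decomposition — using indecomposability of $\D^b(X)$ for a connected Calabi--Yau threefold, or directly showing the $\lambda$-eigenvector has all Chern-character components nonzero so that any object generating a thick subcategory must have full-rank class growth. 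Hence $\D_{\lambda'} = 0$ for all $\lambda' < \lambda$ and $\D_\lambda = \D^b(X)$, so the filtration has one step and $\Phi$ is pseudo-Anosov. The remaining input — that $\lambda > 1$ — is a finite computation with the $4\times 4$ matrix $M_\Phi$; I expect its characteristic polynomial to be (a product involving) a Salem or Lehmer-type polynomial, but only the elementary fact $\rho(M_\Phi)>1$ is needed here.
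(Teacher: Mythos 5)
Your overall architecture (lower bound from the action on cohomology, upper bound from some global mass estimate, then an arithmetic argument to exclude objects whose class lies in the span of the smaller eigenvalues) matches the paper's proof in outline, but the step you call "the main obstacle" -- the upper bound -- rests on a false inequality. You claim that for a geometric stability condition "the mass of any object is comparable to a fixed norm of its Chern character," and deduce $m_\sigma(\Phi^nE)\leq C\cdot\rho(M_\Phi)^n\cdot n^d$. Only one direction of that comparison holds: from $|Z(E)|\leq m_\sigma(E)$ and the support property one gets $\|\ch(E)\|\leq C\,m_\sigma(E)$, but the reverse bound $m_\sigma(E)\leq C'\|\ch(E)\|$ fails badly, since mass sees the Harder--Narasimhan factors individually while the numerical class sees only their (possibly cancelling) sum -- e.g.\ $A\oplus A[1]$ has vanishing class and positive mass, and $\O(n)\oplus\O(-n)$ has bounded class and unbounded mass. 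The paper closes this gap by a completely different mechanism: Ikeda's theorem (Theorem \ref{Thm:Ikeda}) bounds the mass growth of every object by the mass growth of a split generator, which is in turn bounded by the categorical entropy $h_\cat(\Phi)$, and the identity $h_\cat(\Phi)=\log\rho([\Phi])$ for this specific $\Phi$ is quoted from \cite{Fan1}; that identity is a genuine computation exploiting the structure of $T_\O$ and $-\otimes\O(-1)$, not a formal consequence of the support property. Without this (or an equivalent entropy computation) your upper bound does not go through.

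A secondary issue is your treatment of objects $E$ with $[E]\in\mathrm{Span}\{v_2,v_3,v_4\}$. Indecomposability of $\D^b(X)$ as a triangulated category does not rule out proper $\Phi$-invariant thick subcategories, so that route is a non sequitur. What is actually needed -- and what the paper verifies by direct computation in Appendix \ref{CY3} -- is that the complex span of the eigenvectors for $\lambda_2,\lambda_3,\lambda_4$ of $[\Phi]$ acting on $H^{\mathrm{ev}}(X;\C)$ contains no nonzero rational vector, after deforming $\sigma$ so that $Z_\sigma(v_1)\neq0$. You gesture at checking eigenvector components, which is the right spirit, but the precise arithmetic statement and its verification are an essential part of the proof, not an afterthought.
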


In Section~\ref{sec_padiff}, we discuss two ways in which a pseudo-Anosov map in the classical sense induces a pseudo-Anosov autoequivalence on the Fukaya category of a surface.

In the last section, we propose several interesting open questions on categorical dynamical systems that are related to pseudo-Anosov autoequivalences.
In particular, we discuss other possible ways to define the notion of pseudo-Anosov autoequivalences without Assumptions (A) and (B).
We also introduce the notion of \emph{irreducible} autoequivalence of a category in Definition \ref{Def:irred}.
It is then a formal consequence that if an autoequivalence is irreducible and has positive mass growth for a nonzero object, then it is pseudo-Anosov in the sense of Definition \ref{Def:pAintro}.

In Appendix \ref{app:ellcurve}, we show that the mass growth rates with respect to the complexity function (Example~\ref{eg:complexity}) and to stability conditions (Example~\ref{eg:stab}) on the derived category of coherent sheaves on elliptic curves coincide.

\medskip

\noindent{\bf Acknowledgements.}
We thank Hanwool Bae and Sangjin Lee for pointing out several mistakes in the earlier version of the present article.
This research was partially conducted during the period SF served as a Clay Research Fellow.
SF gratefully acknowledges support from the Institute for Advanced Study in Princeton under Grant No. DMS-1638352,
and the Mathematical Sciences Research Institute in Berkeley under Grant No. DMS-1440140 during the Fall 2019 semester.
This material is based upon work supported by the National Science Foundation, as well as grants DMS-1107452, 1107263, 1107367 ``RNMS: Geometric Structures and Representation Varieties'' (the GEAR Network).
LK was supported by a Simons research grant, NSF DMS 150908, ERC Gemis,
DMS-1265230, DMS-1201475 OISE-1242272 PASI. Simons collaborative Grant
- HMS. HSE-grant, HMS
and automorphic forms.
LK  is partially supported by Laboratory of
Mirror Symmetry NRU
HSE, RF Government grant, ag.  14.641.31.0001.
LK was also supported by National Science Fund of Bulgaria, National Scientific Program ``Excellent Research and People for the Development of European Science" (VIHREN), Project No. KP-06-DV-7.
Finally, we would like to thank an anonymous referee for helpful comments and suggestions.

\section{Growth filtrations and pseudo-Anosov autoequivalences}
\label{sec:section2def}

In this section, we develop the notion of growth filtration associated to any autoequivalence of a triangulated category $\D$ which satisfies Assumptions (A) and (B) in the introduction.
We then use it to define the notion of pseudo-Anosov autoequivalences.
Some examples will be worked out in Section~\ref{sec:examples}.

\subsection{Mass functions on triangulated categories} \label{sec:def}

In order to define the mass growth  of an object with respect to an autoequivalence,
one needs to impose a \emph{mass function} on the underlying triangulated category $\D$.

\begin{Def} \label{Def:Massfn}
Let $\D$ be a triangulated category.
A \emph{mass function} on $\D$ is a non-negative function on the objects
$$
\mu\colon\Ob(\D)\ra\R_{\geq0}
$$
such that
	\begin{enumerate}
	\item $\mu(A)+\mu(C)\geq\mu(B)$ for any exact triangle $A\ra B\ra C\ra A[1]$ in $\D$.
	\item $\mu(E)=\mu(E[1])$ for any $E\in\D$.
	\item $\mu(E)\leq\mu(E\oplus F)$ for any $E,F\in\D$.
	\item $\mu(0)=0$.
	\end{enumerate}
\end{Def}

One can find several natural examples of mass functions on triangulated categories.

\begin{eg} \label{eg:Hom}
Suppose $\D$ is $k$-linear and of finite type, i.e.~for any pair of objects $E,F\in\D$ the sum
$\sum_{i\in\Z}\dim_k\Hom_\D(E, F[i])$ is finite.
Then for any nonzero object $E\in\D$,
$$
\mu_E(F)\coloneqq\sum_{i\in\Z}\dim_k\Hom_\D(E, F[i])
$$
defines a mass function on $\D$.
Note that in this example we may have $\mu_E(F)=0$ for a nonzero object $F\in\D$.
\end{eg}

\begin{eg} \label{eg:complexity}
Let $G$ be a split generator of a triangulated category $\D$.
Then the complexity function considered in \cite{DHKK} defined as
$$
\delta_G(E)\coloneqq
\inf\left\{ \displaystyle k \in \N \,\middle|\,
\begin{xy}
(0,5) *{0}="0", (20,5)*{\star}="1", (30,5)*{\dots}, (40,5)*{\star}="k-1", 
(60,5)*{E\oplus\star}="k",
(10,-5)*{G[n_{1}]}="n1", (30,-5)*{\dots}, (50,-5)*{G[n_{k}]}="nk",
\ar "0"; "1"
\ar "1"; "n1"
\ar@{-->} "n1";"0" 
\ar "k-1"; "k" 
\ar "k"; "nk"
\ar@{-->} "nk";"k-1"
\end{xy}
\, \right\}
$$
defines a mass function on $\D$.
\end{eg}

The main examples of mass functions that we will consider are the ones given by Bridgeland stability conditions.
We recall the definition and some basic facts about Bridgeland stability conditions.

\begin{Def}[Bridgeland \cite{Bri}] \label{def:stab}
Fix a triangulated category $\D$ and a group homomorphism $\cl\colon K_0(\D)\ra \Gamma$
to a finite rank abelian group $\Gamma$.
A Bridgeland stability condition $\sigma=(Z,P)$ on $\D$ consists of:
	\begin{itemize}
	\item a group homomorphism $Z\colon\Gamma\ra\C$, and
	\item a collection of full additive subcategories $P=\{P(\phi)\}_{\phi\in\R}$ of $\D$,
	\end{itemize}
such that:
	\begin{enumerate}
	\item $Z(E)\coloneqq Z(\cl([E]))\in\R_{>0}\cdot e^{i\pi\phi}$ for any $0\neq E\in P(\phi)$;
	\item $P(\phi+1)=P(\phi)[1]$;
	\item If $\phi_1>\phi_2$ and $A_i\in P(\phi_i)$ then $\Hom(A_1, A_2)=0$;
	\item For any $0\neq E\in\D$, there exists a (unique) collection of exact triangles
\[
\xymatrix@C=.5em{
& 0 \ar[rrrr] &&&& E_1 \ar[rrrr] \ar[dll] &&&& E_2
\ar[rr] \ar[dll] && \ldots \ar[rr] && E_{n-1}
\ar[rrrr] &&&& E \ar[dll]  &  \\
&&& A_1 \ar@{-->}[ull] &&&& A_2 \ar@{-->}[ull] &&&&&&&& A_n \ar@{-->}[ull] 
}
\]
	such that $A_i\in P(\phi_i)$ and $\phi_1>\phi_2>\cdots>\phi_n$.
	Denote $\phi_\sigma^+(E)\coloneqq\phi_1$ and $\phi_\sigma^-(E)\coloneqq\phi_n$.
	The \emph{mass} of $E$ with respect to $\sigma$ is defined to be
	$$m_\sigma(E)\coloneqq\sum_{i=1}^n|Z(A_i)|.$$

	\item (Support property \cite{KS}) There exists a constant $C>0$ and a norm $||\cdot||$ on $\Gamma\otimes_\Z\R$ such that
	$$\|\cl([E])\| \leq C|Z(E)|$$ for any $E\in P(\phi)$ and any $\phi\in\R$.
	\end{enumerate}
\end{Def}

The group homomorphism $Z\colon\Gamma\ra\C$ is called the \emph{central charge}.
Nonzero objects in $P(\phi)$ are called \emph{$\sigma$-semistable object of phase $\phi$}.
The sequence in $(4)$ is called the Harder--Narasimhan filtration of $E$
and $A_1,\ldots,A_n$ are called the semistable factors of $E$.

Denote by $\Stab(\D)$ (or more precisely $\Stab_\Gamma(\D)$) the set of Bridgeland stability conditions on $\D$ with respect to $\cl\colon K_0(\D)\ra \Gamma$.
There is a useful topology on $\Stab(\D)$ introduced by Bridgeland, which is induced by the
generalized metric \cite[Proposition 8.1]{Bri}:
\[
d(\sigma_1, \sigma_2) \coloneqq \sup_{E\neq0} 
\left\{
|\phi_{\sigma_2}^-(E)-\phi_{\sigma_1}^-(E)|, \,
|\phi_{\sigma_2}^+(E)-\phi_{\sigma_1}^+(E)|, \,
\left|\log\frac{m_{\sigma_2}(E)}{m_{\sigma_1}(E)}\right|
\right\}
\in[0,\infty].
\]
The forgetful map
\[
\Stab(\D) \ra \Hom(\Gamma, \C), \ \ \ \sigma=(Z, P) \mapsto Z
\]
is a local homeomorphism \cite[Theorem 7.1]{Bri}.
Hence $\Stab(\D)$ is naturally a finite dimensional complex manifold.

There are two natural group actions by $\Aut(\D)$ and $\widetilde{\GL^+(2,\R)}$ on the space of Bridgeland stability conditions which commute with each other \cite[Lemma 8.2]{Bri}.
The group of autoequivalences $\Aut(\D)$ acts on $\Stab(\D)$ by isometries with respect to the generalized metric.
To describe the action explicitly, let $\Phi\in\Aut(\D)$ be an autoequivalence and define
\[
\sigma=(Z,P) \mapsto \Phi\cdot\sigma \coloneqq (Z\circ\Phi^{-1}, P'),
\]
where $P'(\phi)=\Phi(P(\phi))$.

Let $\pi\colon\widetilde{\GL^+(2,\R)}\ra\GL^+(2,\R)$ be the universal cover.
Recall that $\widetilde{\GL^+(2,\R)}$ is isomorphic to the group of pairs $(T,f)$
where $f\colon\R\ra\R$ is an increasing map with $f(\phi+1)=f(\phi)+1$,
and $T\in\GL^+(2,\R)$ such that the induced maps on 
$S^1\cong\R/2\Z\cong(\R^2\bs\{0\})/\R_{>0}$ coincide.
Given $g=(T,f)\in\widetilde{\GL^+(2,\R)}$, define its action on $\Stab(\D)$ as
\[
\sigma=(Z,P) \mapsto \sigma\cdot g \coloneqq (T^{-1}\circ Z, P''),
\]
where $P''(\phi)=P(f(\phi))$.
Note that the subgroup $\C\subset\widetilde{\GL^+(2,\R)}$ acts freely and isometrically on $\Stab(\D)$.
For $z\in\C$, its action on $\Stab(\D)$ is given by
\[
\sigma=(Z,P) \mapsto \sigma\cdot z\coloneqq\left(\exp(-i\pi z)Z, P'''\right),
\]
where $P'''(\phi)=P(\phi+\mathrm{Re}z)$.

Each Bridgeland stability condition on $\D$ defines a mass function on $\D$.

\begin{eg} \label{eg:stab}
Let $\sigma$ be a Bridgeland stability condition on $\D$.
By Ikeda \cite[Proposition 3.3]{Ikeda}, we have
\[
m_\sigma(B)\leq m_\sigma(A)+m_\sigma(C)
\]
for any exact triangle $A\ra B\ra C\ra A[1]$ in $\D$.
Hence the mass $m_\sigma$ defines a mass function on $\D$ in the sense of Def.~\ref{Def:Massfn}.
\end{eg}

\begin{Rmk}
The three examples of mass functions above are all analogues of classical objects in Teichm\"uller theory.
Example \ref{eg:Hom} is an analogue of the geometric intersection number between two curves, see \cite[Lemma 2.19]{DHKK}.
Example \ref{eg:complexity} is an analogue of word length of a curve,
and Example \ref{eg:stab} is an analogue of Riemannian length of a curve.
\end{Rmk}

Let $\D'$ be a thick triangulated subcategory of $\D$, i.e.~$\D'$ is closed under shifts, mapping cones, 
and direct summands.
We denote the space of \emph{relative} mass functions by
\[
\cM(\D, \D') \coloneqq \{ \mu \textrm{ mass function on }\D \mid \, \mu(E)=0 \iff E\in\D' \},
\]
and denote $\cM(\D)\coloneqq\cM(\D, 0)$ the space of mass functions that vanish only at the zero object.
This is the analogue of the space of length functions on Riemann surfaces in Teichm\"uller theory.
We define a generalized metric on $\cM(\D, \D')$ in a similar way as \cite[Proposition 8.1]{Bri}.

\begin{Def}
Let $\cM(\D, \D')$ be the space of mass functions on $\D$ that vanish on $\D'$. Then
\[
d(\mu_1, \mu_2) \coloneqq \sup_{E\notin\D'} \Big\{
\Big|\log\frac{\mu_1(E)}{\mu_2(E)}\Big|
\Big\} \in [0,\infty]
\]
defines a generalized metric on $\cM(\D, \D')$.
\end{Def}

We will only use $\cM(\D)$ in the main text of this article.
In Appendix \ref{Append:Mass} we discuss some basic properties of mass functions: functoriality and induced relative mass functions.

Observe that $\Aut(\D)$ and $\R_{>0}$ act isometrically on the space of mass functions $\cM(\D)$,
and the actions by $\R_{>0}$ are free.
We also consider the quotient spaces $\Stab(\D)/\C$ and $\cM(\D)/\R_{>0}$.
Note that the orbits of the $\C$-action (resp.~$\R_{>0}$-action) in $\Stab(\D)$ (resp.~$\cM(\D)$)
are closed. Hence the quotient (generalized) metrics on $\Stab(\D)/\C$ and $\cM(\D)/\R_{>0}$ are given by
\[
\overline{d}(\overline\sigma_1, \overline\sigma_2)\coloneqq\inf_{z\in\C} d(\sigma_1, \sigma_2\cdot z) \ \ \ \mathrm{and} \ \ \
\overline{d}(\overline\mu_1, \overline\mu_2)\coloneqq\inf_{r\in\R_{>0}} d(\mu_1, \mu_2\cdot r).
\]

\begin{Lem}
The map $\Stab(\D)\ra\cM(\D)$ which sends $\sigma$ to its mass $m_\sigma$
descends to a map $\Stab(\D)/\C\ra\cM(\D)/\R_{>0}$.
Moreover, both maps are contractions between (generalized) metric spaces, therefore continuous.
\end{Lem}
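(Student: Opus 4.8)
The plan is to verify directly that the assignment $\sigma \mapsto m_\sigma$ is $1$-Lipschitz for the generalized metrics, and then check that it is equivariant enough to descend to the quotients. First I would recall the two defining quantities: the Bridgeland metric $d$ on $\Stab(\D)$ involves the three suprema over $E \neq 0$ of the discrepancies $|\phi^{\pm}_{\sigma_2}(E) - \phi^{\pm}_{\sigma_1}(E)|$ and $\big|\log\frac{m_{\sigma_2}(E)}{m_{\sigma_1}(E)}\big|$, while the metric on $\cM(\D)$ involves only the supremum of $\big|\log\frac{\mu_1(E)}{\mu_2(E)}\big|$. So the key inequality is the tautology
\[
\sup_{E \neq 0} \Big|\log\frac{m_{\sigma_1}(E)}{m_{\sigma_2}(E)}\Big| \;\leq\; d(\sigma_1, \sigma_2),
\]
which is immediate since the left-hand side is one of the three terms whose maximum defines $d(\sigma_1,\sigma_2)$. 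This already shows that $\Stab(\D) \to \cM(\D)$, $\sigma \mapsto m_\sigma$, is a contraction (distance-nonincreasing), hence continuous. One small point to address: $m_\sigma \in \cM(\D)$ rather than merely $\cM(\D, \D')$ for some larger $\D'$ — this follows because $m_\sigma(E) = 0$ forces all Harder--Narasimhan factors to have vanishing central charge, contradicting axiom (1) of Definition~\ref{def:stab} unless $E = 0$; combined with Example~\ref{eg:stab} (which gives the triangle inequality for $m_\sigma$) we get a bona fide element of $\cM(\D)$.

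Next I would handle the descent to the quotients. The $\C$-action on $\Stab(\D)$ and the $\R_{>0}$-action on $\cM(\D)$ are the relevant ones. For $z \in \C$, the stability condition $\sigma \cdot z$ has central charge $\exp(-i\pi z)Z$ and shifted phases, so its mass function is $m_{\sigma \cdot z}(E) = e^{\pi \,\mathrm{Im}\, z}\, m_\sigma(E)$; in other words $m_{\sigma \cdot z} = m_\sigma \cdot e^{\pi\,\mathrm{Im}\,z}$ under the $\R_{>0}$-action. Hence $\sigma \mapsto m_\sigma$ intertwines the $\C$-action with the $\R_{>0}$-action (factoring through $\C \to \R_{>0}$, $z \mapsto e^{\pi\,\mathrm{Im}\,z}$), and therefore descends to a well-defined map $\overline\sigma \mapsto \overline{m_\sigma}$ on the quotients. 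That this descended map is again a contraction for the quotient metrics $\overline d$ is a formal consequence: for any $z \in \C$,
\[
\overline d(\overline{m_{\sigma_1}}, \overline{m_{\sigma_2}}) \;\leq\; d(m_{\sigma_1}, m_{\sigma_2} \cdot e^{\pi\,\mathrm{Im}\,z}) \;=\; d(m_{\sigma_1}, m_{\sigma_2 \cdot z}) \;\leq\; d(\sigma_1, \sigma_2 \cdot z),
\]
and taking the infimum over $z \in \C$ on the right gives $\overline d(\overline{m_{\sigma_1}}, \overline{m_{\sigma_2}}) \leq \overline d(\overline\sigma_1, \overline\sigma_2)$.

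There is no serious obstacle here; the statement is essentially bookkeeping about which terms appear in which supremum, plus the explicit formula for how mass transforms under the $\C$-action. The only place requiring a moment's care is confirming that the infimum-over-orbits definitions of $\overline d$ on the two sides are compatible — i.e.~that pushing forward an orbit of the $\C$-action lands inside a single orbit of the $\R_{>0}$-action — which is exactly the equivariance computation above, together with the fact (already noted in the text) that these orbits are closed so the quotient generalized metrics are genuinely given by those infima. Continuity of both maps then follows from being contractions, as a contraction between generalized metric spaces is automatically continuous. This completes the proof.
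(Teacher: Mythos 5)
Your proof is correct and follows the same route the paper intends — the paper simply states that the lemma "follows easily from the definitions," and your write-up supplies exactly the omitted bookkeeping: the mass-discrepancy term is one of the three suprema defining Bridgeland's metric, the identity $m_{\sigma\cdot z}=e^{\pi\,\mathrm{Im}\,z}\,m_\sigma$ gives equivariance and hence descent, and the infimum-over-orbits chain gives the contraction on quotients. No gaps.
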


\begin{proof}
This follows easily from the definitions of the (generalized) metrics on $\Stab(\D)$  and $\cM(\D)$.
\end{proof}

We denote the image of these two maps by $\cM_\Stab(\D)$ and $\cM_\Stab(\D)/\R_{>0}$ respectively.
Note that the map $\Stab(\D)/\C\ra\cM(\D)/\R_{>0}$ is not injective in general.
For instance, let $\D$ be the derived category of $\C$-linear representations of the $A_2$-quiver.
Let $E_1$ and $E_2$ be the simple representations with dimension vectors $(1,0)$ and $(0,1)$.
Choose any Bridgeland stability condition $\sigma\in\Stab(\D)$ such that
$0<\phi(E_1)<\phi(E_2)<1$ and $|Z(E_1)|=|Z(E_2)|=1$.
Such Bridgeland stability conditions induce the same mass function on $\D$, 
but they are not in the same $\C$-orbit.

\subsection{Growth filtrations of autoequivalences} \label{sec:fil}
In this section, we define the growth filtration associated to an autoequivalence on a triangulated category $\D$
which satisfies Assumptions (A) and (B) in the introduction.

\begin{Def}
Let $\mu$ be a mass function on a triangulated category $\D$,
and $\Phi$ be an autoequivalence on $\D$.
The mass growth of an object $E$ with respect to both $\mu$ and $\Phi$ is defined to be
$$
h_{\mu,\Phi}(E)\coloneqq\limsup_{n\ra\infty}\frac{1}{n}\log\mu(\Phi^nE).
$$
\end{Def}

The following lemma follows directly from the definition of mass functions.

\begin{Lem}
Let $\mu$ and $\Phi$ be as above.
Then
$$
\D_\lambda\coloneqq\{E:h_{\mu,\Phi}(E)\leq\log\lambda\}
$$
is a $\Phi$-invariant thick triangulated subcategory of $\D$.
\end{Lem}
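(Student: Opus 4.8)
The claim has two parts: (i) $\D_\lambda$ is a thick triangulated subcategory, and (ii) it is $\Phi$-invariant. Part (ii) is immediate: if $E\in\D_\lambda$ then $h_{\mu,\Phi}(\Phi E)=\limsup_n\frac1n\log\mu(\Phi^{n+1}E)=\limsup_n\frac{n+1}{n}\cdot\frac{1}{n+1}\log\mu(\Phi^{n+1}E)=h_{\mu,\Phi}(E)\le\log\lambda$, using that $\frac{n+1}{n}\to 1$; similarly for $\Phi^{-1}$ since $\Phi$ is an autoequivalence. So I would dispose of invariance in one line and then concentrate on thickness.

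For thickness I need to check three closure properties. First, closure under shifts: $\mu(\Phi^n(E[1]))=\mu((\Phi^nE)[1])=\mu(\Phi^nE)$ by axiom (2) of a mass function (and the fact that $\Phi$ commutes with shift), so $h_{\mu,\Phi}(E[1])=h_{\mu,\Phi}(E)$ and $\D_\lambda$ is shift-stable. Second, closure under cones: given an exact triangle $A\to B\to C\to A[1]$ with $A,C\in\D_\lambda$, apply $\Phi^n$ to get an exact triangle, and use axiom (1), $\mu(\Phi^nB)\le\mu(\Phi^nA)+\mu(\Phi^nC)$. Then $\frac1n\log\mu(\Phi^nB)\le\frac1n\log\big(\mu(\Phi^nA)+\mu(\Phi^nC)\big)\le\frac1n\log\big(2\max\{\mu(\Phi^nA),\mu(\Phi^nC)\}\big)=\frac{\log 2}{n}+\frac1n\max\{\log\mu(\Phi^nA),\log\mu(\Phi^nC)\}$, so taking $\limsup$ gives $h_{\mu,\Phi}(B)\le\max\{h_{\mu,\Phi}(A),h_{\mu,\Phi}(C)\}\le\log\lambda$. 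Third, closure under direct summands: if $E\oplus F\in\D_\lambda$, then by axiom (3), $\mu(\Phi^nE)=\mu(\Phi^n E \oplus \Phi^n F)$ wait—rather $\mu(\Phi^nE)\le\mu(\Phi^nE\oplus\Phi^nF)=\mu(\Phi^n(E\oplus F))$, so $h_{\mu,\Phi}(E)\le h_{\mu,\Phi}(E\oplus F)\le\log\lambda$, and likewise $F\in\D_\lambda$. Finally I should note $0\in\D_\lambda$ via axiom (4), so the subcategory is nonempty/genuine.

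One more point deserves care: being a triangulated subcategory requires that for $A,C\in\D_\lambda$ and \emph{any} morphism $A\to B$, the cone $C(f)$ lies in $\D_\lambda$ once we know $B\in\D_\lambda$ too — but actually what we want is the standard formulation: $\D_\lambda$ is a full subcategory such that whenever two of the three objects in a distinguished triangle lie in $\D_\lambda$, so does the third. The cone computation above handles the case where $A,C$ are in $\D_\lambda$; the rotated triangles $B\to C\to A[1]\to B[1]$ and $C[-1]\to A\to B\to C$ reduce the remaining two cases (two-out-of-three with $B$ being one of the known objects) to the same inequality $\mu(\Phi^nB)\le\mu(\Phi^nA)+\mu(\Phi^nC)$ applied to the appropriately rotated triangle, together with shift-invariance of $h_{\mu,\Phi}$. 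So I would state the two-out-of-three property and observe that all three cases follow from axiom (1) applied to a rotation of the triangle plus shift-invariance.

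**Main obstacle.** There is no serious obstacle — this is a formal consequence of the axioms. The only thing to be slightly careful about is the $\limsup$ bookkeeping: that $\limsup_n\frac1n\log(a_n+b_n)\le\max\{\limsup_n\frac1n\log a_n,\limsup_n\frac1n\log b_n\}$, which needs the elementary estimate $\log(a+b)\le\log 2+\max\{\log a,\log b\}$ (valid for $a,b\ge 0$ with the convention $\log 0=-\infty$, and the degenerate case $a=b=0$ handled separately since then $\mu(\Phi^nB)=0$ too). I would isolate this as a one-line sublemma or simply inline it. Everything else is a direct unwinding of Definition~\ref{Def:Massfn}.
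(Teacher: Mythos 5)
Your proof is correct and is exactly the routine verification the paper has in mind: the paper omits the argument entirely, stating only that the lemma ``follows directly from the definition of mass functions,'' and your unwinding of the axioms (shift-invariance via axiom (2), two-out-of-three via axiom (1) applied to $\Phi^n$ of a rotated triangle together with the estimate $\log(a+b)\le\log 2+\max\{\log a,\log b\}$, summands via axiom (3), and $\Phi$-invariance from $\tfrac{n+1}{n}\to 1$) is the intended direct approach. No gaps; the only cosmetic addition one might make is the observation that $\mu$ is constant on isomorphism classes (from the triangle $E\to E'\to 0$), so that $\D_\lambda$ is a well-defined full subcategory closed under isomorphism.
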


We now restrict our attention to the mass functions given by Bridgeland stability conditions.
The following proposition is a special case of Ikeda \cite[Proposition 3.10]{Ikeda}.
It can also be deduced from a result of Bridgeland \cite[Proposition 8.1]{Bri} stating that if 
a stability condition $\tau\in B_\epsilon(\sigma)$ is in a neighborhood of another stability condition $\sigma$ for small enough $\epsilon>0$, then there are constants $C_1, C_2>0$ such that
\[
C_1 m_\tau(E) < m_\sigma(E) < C_2 m_\tau(E)
\]
for any nonzero object $E\in\D$.

\begin{Prop}
\label{prop:sameconnectedcomponent}
If $\sigma$ and $\tau$ lie in the same connected component of $\Stab(\D)$,
then $h_{m_\sigma, \Phi}(E)=h_{m_\tau, \Phi}(E)$ for any autoequivalence $\Phi$ and any object $E\in\D$.
\end{Prop}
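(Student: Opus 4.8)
The plan is to reduce the global statement about two stability conditions $\sigma,\tau$ in the same connected component to a purely local comparison, and then to chain such local comparisons along a path. First I would observe that the quantity $h_{m_\sigma,\Phi}(E)$ is unchanged if $m_\sigma$ is replaced by any mass function $\mu$ which is \emph{bi-Lipschitz comparable} to $m_\sigma$ on nonzero objects, meaning that there exist constants $C_1,C_2>0$ with $C_1\,m_\sigma(E)\le \mu(E)\le C_2\,m_\sigma(E)$ for all $0\ne E\in\D$; indeed, applying this with $E$ replaced by $\Phi^nE$ gives
\[
\frac{1}{n}\log C_1 + \frac{1}{n}\log m_\sigma(\Phi^nE)
\;\le\; \frac{1}{n}\log\mu(\Phi^nE)
\;\le\; \frac{1}{n}\log C_2 + \frac{1}{n}\log m_\sigma(\Phi^nE),
\]
and letting $n\to\infty$ the additive $\tfrac1n\log C_i$ terms vanish, so the $\limsup$'s agree. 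Thus the relation ``$h_{m_\sigma,\Phi}=h_{m_\tau,\Phi}$ on all objects'' holds whenever $m_\sigma$ and $m_\tau$ are bi-Lipschitz comparable.

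Next I would record that being bi-Lipschitz comparable in this sense is an equivalence relation that is \emph{open} in $\Stab(\D)$: by Bridgeland's result quoted just before the proposition (a quantitative form of \cite[Proposition 8.1]{Bri}), every $\sigma$ has a neighborhood $B_\epsilon(\sigma)$ such that every $\tau\in B_\epsilon(\sigma)$ satisfies $C_1 m_\tau(E) < m_\sigma(E) < C_2 m_\tau(E)$ for all nonzero $E$, with constants depending only on $\sigma$ and $\epsilon$. Equivalently, the relation ``$h_{m_\sigma,\Phi}(E)=h_{m_\tau,\Phi}(E)$ for all $\Phi$ and all $E$'' contains all such pairs $(\sigma,\tau)$ with $\tau\in B_\epsilon(\sigma)$.

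Finally I would globalize by a connectedness argument. Fix $\sigma$ and let $U$ be the set of $\tau$ in the connected component $C$ of $\sigma$ for which $h_{m_\sigma,\Phi}(E)=h_{m_\tau,\Phi}(E)$ for every autoequivalence $\Phi$ and every object $E$. By the previous paragraph $U$ is open; it is nonempty since $\sigma\in U$; and it is closed in $C$ because if $\tau\in C$ is a limit of points of $U$, then some $B_\epsilon(\tau)$ meets $U$, say at $\tau'$, and then $h_{m_\sigma,\Phi}=h_{m_{\tau'},\Phi}$ (as $\tau'\in U$) while $h_{m_{\tau'},\Phi}=h_{m_\tau,\Phi}$ (as $\tau,\tau'$ are both in $B_\epsilon(\tau)$ and hence bi-Lipschitz comparable, using the local constants at $\tau$), whence $h_{m_\sigma,\Phi}=h_{m_\tau,\Phi}$ and $\tau\in U$. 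Since $C$ is connected, $U=C$, which is the claim. Alternatively, one can cover a path from $\sigma$ to $\tau$ in $C$ by finitely many balls $B_\epsilon$ on each of which the mass functions are mutually bi-Lipschitz comparable, and compose the resulting chain of equalities; I expect the mild point requiring care is simply making the constants in the chain uniform along a compact path, which is routine once one invokes compactness of the path and the openness of the comparability relation.
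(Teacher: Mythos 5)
Your proposal is correct and is precisely the deduction the paper itself indicates: it cites Ikeda and notes the result follows from Bridgeland's local estimate $C_1 m_\tau(E) < m_\sigma(E) < C_2 m_\tau(E)$ on a ball $B_\epsilon(\sigma)$, which is exactly your bi-Lipschitz comparability step, combined with the standard open-closed (or chain-of-balls) connectedness argument. Nothing is missing; you have simply written out in full what the paper leaves as a citation.
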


\begin{Def} \label{def:filtration}
Let $\D$ be a triangulated category such that $\Stab(\D)\neq\emptyset$.
We fix a choice of a connected component $\Stab^\dagger(\D)$ of $\Stab(\D)$, and define the \emph{growth filtration} associated to an autoequivalence $\Phi\in\Aut(\D)$ to be the filtration $\{\D_\lambda\}$ by $\Phi$-invariant thick triangulated subcategories of $\D$ with respect to the mass functions given by the stability conditions in $\Stab^\dagger(\D)$.
\end{Def}

The possible mass growth rates $\lambda$ are the categorical analogue of the growth rates of lengths 
$1\leq\lambda_1<\lambda_2<\cdots<\lambda_k$
in Thurston's theorem (Theorem \ref{Thm:Thurston}).
However, we do not know whether there are only finitely many possible mass growth rates for an autoequivalence in general.
The algebraicity of $\lambda$ is unclear in general as well.
The problem of algebraicity of categorical entropy was also raised in \cite[Question 4.1]{DHKK}.

Note that one can also define the growth filtration (therefore define the notion of pseudo-Anosov autoequivalences as in the next section) using other mass functions, for instance the complexity functions in Example \ref{eg:complexity} or the Ext-distance functions in Example \ref{eg:Hom}. See Section \ref{sec:openquestions} and Appendix \ref{app:ellcurve} for more discussions on these alternative definitions. In the main text, we will stick with the growth filtration defined in Definition \ref{def:filtration}.

\subsection{Pseudo-Anosov autoequivalences} \label{pA:auto}

From now on, we consider triangulated categories $\D$ satisfying\\
\textbf{Assumption (A)}: $\Stab(\D)\neq\emptyset$, and \\
\textbf{Assumption (B)}: we fix a connected component $\Stab^\dagger(\D)\subset\Stab(\D)$. 

Then any autoequivalence $\Phi\in\Aut(\D)$ has an associated growth filtration  $\{\D_\lambda\}$ of $\D$ as in Definition \ref{def:filtration}.
Motivated by Thurston's Theorem~\ref{Thm:Thurston},
we propose the following definition of pseudo-Anosov autoequivalences.

\begin{Def} \label{Def:pAauto}
An autoequivalence $\Phi$ is said to be \emph{pseudo-Anosov} if the associated growth filtration has only one step:
$$
0\subset\D_{\lambda}=\D
$$
with  $\lambda>1$.
In other words, the mass growth of any nonzero object is the same positive constant $\log\lambda$.
We call $\lambda$ the \emph{stretch factor} of $\Phi$.

An autoequivalence $\Phi$ is said to be \emph{weak pseudo-Anosov} if the mass growth of any object with \emph{nonzero class} (i.e.~its image under $\cl\colon K_0(\D)\rightarrow\Gamma$ is nonzero) is the same positive constant $\log\lambda$.
\end{Def}

In a previous work of Dimitrov, Haiden, Katzarkov, and Kontsevich \cite{DHKK},
another definition of pseudo-Anosov autoequivalences was proposed.
We call such autoequivalences ``DHKK pseudo-Anosov" in this article.

\begin{Def}[\cite{DHKK}, Definition 4.1 and \cite{Kikuta}, Definition 4.6] \label{Def:DHKK}
An autoequivalence $\Phi\in\Aut(\D)$ is said to be \emph{DHKK pseudo-Anosov} if
there exists a Bridgeland stability condition $\sigma\in\Stab^\dagger(\D)$ and an element 
$g\in\widetilde{\GL^+(2,\R)}$ such that
	\begin{itemize}
	\item $\Phi\cdot\sigma = \sigma \cdot g$.
	\item $\pi(g)=
\begin{pmatrix} 
r^{-1} & 0 \\
0 & r
\end{pmatrix} \ \mathrm{or} \
\begin{pmatrix} 
r & 0 \\
0 & r^{-1}
\end{pmatrix}
	\in \GL^+(2, \R)$ for some $|r|>1$.
	\end{itemize}
Here $\lambda\coloneqq|r|>1$ is called the \emph{DHKK stretch factor} of $\Phi$.
\end{Def}

\begin{Rmk}
We use the slightly modified definition of DHKK pseudo-Anosov by Kikuta \cite[Definition~4.6]{Kikuta}.
The only difference from the definition in \cite{DHKK} is that we require the stability condition $\sigma$ to lie in the distinguished connected component $\Stab^\dagger(\D)$.
\end{Rmk}

The definition of DHKK pseudo-Anosov autoequivalences is motivated by the original definition of pseudo-Anosov maps on Riemann surfaces --
there exists a pair of transverse measured foliations that are preserved by the map and the transverse measures are stretched/contracted by the map.
In Definition \ref{Def:DHKK}, the Bridgeland stability condition $\sigma$ plays the role of a pair of measured foliations,
and the two conditions on $(\Phi, \sigma, g)$ correspond to the preservation and stretching/contracting of the foliations by the map.

\begin{Rmk}
A DHKK pseudo-Anosov autoequivalence will preserve the set of semistable objects associated to the corresponding Bridgeland stability condition.
On the other hand, it should act on the Grothendieck group of the category in a non-trivial stretching/contracting manner.
These two requirements make it difficult to find DHKK pseudo-Anosov maps and indeed very few examples are known, see for instance
\cite{DHKK, Kikuta}.
\end{Rmk}

We show that the definition of pseudo-Anosov in Definition~\ref{Def:pAauto} is more general than DHKK pseudo-Anosov.
In the next section, we provide examples of pseudo-Anosov autoequivalences that are not DHKK pseudo-Anosov.

\begin{Thm} \label{Thm:more_general}
Let $\D$ be a triangulated category and $\Phi\in\Aut(\D)$ an autoequivalence.
If $\Phi$ is DHKK pseudo-Anosov, then it is pseudo-Anosov.
Moreover, the stretch factor and the DHKK stretch factor coincide.
\end{Thm}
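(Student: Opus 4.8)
The plan is to compute the mass growth $h_{m_\sigma,\Phi}(E)$ directly for the stability condition $\sigma\in\Stab^\dagger(\D)$ supplied by Definition~\ref{Def:DHKK} (which lies in the distinguished component $\Stab^\dagger(\D)$, so by Proposition~\ref{prop:sameconnectedcomponent} it computes the growth filtration), and to show it equals $\log\lambda$ with $\lambda=|r|$ for every nonzero object. Write $T:=\pi(g)$; since only the moduli $|T^nw|$ will matter, after replacing $r$ by $|r|$ we may assume $T=\mathrm{diag}(\lambda^{-1},\lambda)$ (or $\mathrm{diag}(\lambda,\lambda^{-1})$) with $\lambda>1$. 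Comparing the central-charge components of $\Phi\cdot\sigma=\sigma\cdot g$ gives $Z\circ\Phi^{-1}=T^{-1}\circ Z$ on $K_0(\D)$, hence $Z(\Phi^nE)=T^nZ(E)$ for all $n\geq0$; comparing the slicings gives $\Phi(P(\phi))=P(f(\phi))$ where $g=(T,f)$, hence $\Phi^n(P(\phi))=P(f^n(\phi))$. Now fix $0\neq E\in\D$ with Harder--Narasimhan factors $A_1,\dots,A_m$ of phases $\phi_1>\dots>\phi_m$. Applying $\Phi^n$ to the HN filtration of $E$ yields a filtration of $\Phi^nE$ with factors $\Phi^nA_j\in P(f^n(\phi_j))$, and since $f^n$ is increasing these phases are still strictly decreasing; by uniqueness of HN filtrations this \emph{is} the HN filtration of $\Phi^nE$. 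Therefore
\[
m_\sigma(\Phi^nE)\;=\;\sum_{j=1}^m|Z(\Phi^nA_j)|\;=\;\sum_{j=1}^m|T^nZ(A_j)|,
\]
the key identity from which everything follows.

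From $\|T^nw\|\leq\lambda^n\|w\|$ we get $m_\sigma(\Phi^nE)\leq\lambda^nm_\sigma(E)$, hence $h_{m_\sigma,\Phi}(E)\leq\log\lambda$. For the lower bound, write $Z(A_j)=x_j+iy_j$ (taking $T=\mathrm{diag}(\lambda^{-1},\lambda)$, so the real axis is the eigenline on which $T$ contracts by $\lambda^{-1}$); then $|T^nZ(A_j)|\geq\lambda^n|y_j|$, so if some $y_{j_0}\neq0$ we obtain $m_\sigma(\Phi^nE)\geq\lambda^n|y_{j_0}|$ and hence $h_{m_\sigma,\Phi}(E)=\log\lambda$. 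Thus the theorem is reduced to the following claim: \emph{no nonzero object of $\D$ has all of its $\sigma$-HN factors with central charge lying in the contracting eigenline of $T$}. Granting this, every nonzero $E$ satisfies $h_{m_\sigma,\Phi}(E)=\log\lambda$, so the growth filtration is $0\subset\D_\lambda=\D$ with $\lambda>1$; hence $\Phi$ is pseudo-Anosov in the sense of Definition~\ref{Def:pAauto}, with stretch factor $\lambda=|r|$ equal to the DHKK stretch factor.

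It remains to prove the claim, and this is the one non-formal step. Suppose $A$ is $\sigma$-semistable with $Z(A)$ in the contracting eigenline of $T$. Since $\Phi^nA\in P(f^n(\mathrm{phase}(A)))$ is again semistable, $m_\sigma(\Phi^nA)=|Z(\Phi^nA)|=|T^nZ(A)|=\lambda^{-n}|Z(A)|\to0$. Let $\Phi_*\in\GL(\Gamma)$ be the induced automorphism of $\Gamma$ (it exists since $\Aut(\D)$ acts on $\Stab_\Gamma(\D)$), and put $v=\cl([A])$; then $v$ is not torsion, since a torsion class has vanishing central charge whereas $Z(A)\neq0$. The support property gives $\|\Phi_*^nv\|\leq C\,|Z(\Phi^nA)|\to0$, so the image $\bar v\neq0$ of $v$ in $\Gamma/\mathrm{tors}\cong\Z^k$ satisfies $\bar\Phi_*^n\bar v\to0$ in $\R^k$. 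Hence $\Q[\bar\Phi_*]\bar v$ is a nonzero rational $\bar\Phi_*$-invariant subspace contained in the contracting subspace of $\bar\Phi_*\in\GL_k(\Z)$; the characteristic polynomial of $\bar\Phi_*$ restricted to it is a monic integer polynomial all of whose roots have modulus $<1$, so its (nonzero) constant term is an integer of modulus $<1$ — a contradiction. This proves the claim. The main obstacle is precisely this last step: one must rule out "uniformly contracting" semistable objects, and the crux is that the support property forces the class of such an object into the contracting subspace of the integral automorphism $\Phi_*$, which is impossible by the elementary arithmetic of characteristic polynomials in $\GL_k(\Z)$.
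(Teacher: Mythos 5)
Your proof is correct and follows essentially the same route as the paper's: show that $\Phi$ maps the Harder--Narasimhan filtration of $E$ to that of $\Phi^nE$, so $m_\sigma(\Phi^nE)=\sum_j|T^nZ(A_j)|$, and then use the support property to rule out semistable objects whose central charge lies in the contracting eigenline. The only (harmless) difference is in that last step, where the paper appeals directly to the discreteness of $\cl(K_0(\D))$ in $\Gamma\otimes\R$ (a sequence of lattice classes with $\|\cl([\Phi^nA])\|\to0$ must eventually be torsion, forcing $Z(\Phi^nA)=0$), whereas you reach the same contradiction by the slightly longer characteristic-polynomial argument for $\bar\Phi_*\in\GL_k(\Z)$.
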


\begin{proof}
Let $\Phi$ be a DHKK pseudo-Anosov autoequivalence on $\D$.
By definition, there exists a Bridgeland stability condition $\sigma\in\Stab^\dagger(\D)$ and an element 
$g\in\widetilde{\GL^+(2,\R)}$ satisfying the conditions in Definition \ref{Def:DHKK}.

We claim that the mass growth of any nonzero object $E\in\D$ with respect to $\Phi$ and $\sigma$ is $\log\lambda>0$ where $\lambda>1$ is the DHKK stretch factor.
Let $A_1, A_2, \ldots, A_k$ be the Harder--Narasimhan semistable factors of $E$ with respect to the stability condition $\sigma\in\Stab^\dagger(\D)$.
Since $\Phi\cdot\sigma = \sigma \cdot g$,
the set of $\sigma$-semistable objects and the order of their phases are preserved by $\Phi$.
Hence the Harder--Narasimhan semistable factors of $\Phi^nE$ are given by
$\Phi^nA_1, \Phi^nA_2, \ldots, \Phi^nA_k$, so
\[
m_\sigma(\Phi^nE) = \sum_{i=1}^k |Z_\sigma(\Phi^nA_k)|.
\]
Thus, to prove the claim, it suffices to show that
\[
\lim_{n\ra\infty} \frac{1}{n}\log |Z_\sigma(\Phi^nA)| = \log\lambda
\]
for any $\sigma$-semistable object $A\in\D$.

By the conditions in Definition \ref{Def:DHKK},
the limit $\lim_{n\ra\infty} \frac{1}{n}\log |Z_\sigma(\Phi^nA)|$ is either $\log\lambda$
or $-\log\lambda$.
If there were a $\sigma$-semistable object $A\in\D$ such that 
$\lim_{n\ra\infty} \frac{1}{n}\log |Z_\sigma(\Phi^nA)|=-\log\lambda$,
then the central charges of the semistable objects $A, \Phi A, \Phi^2A, \ldots$ would tend to zero,
which contradicts with the support property of stability conditions (Definition \ref{def:stab}).
This concludes the proof.
\end{proof}

We conclude this section by recalling the definition and some properties of categorical entropy of an autoequivalence introduced by Dimitrov--Haiden--Katzarkov--Kontsevich \cite{DHKK}.
This will be useful for the discussions in the next section.

\begin{Def}[\cite{DHKK}, Definition 2.5]
\label{def:catentropy}
Let $\D$ be a triangulated category with a split generator $G$
and let $\Phi\colon\D\ra\D$  be an endofunctor.
The categorical entropy of $\Phi$ is defined to be
\[
h_\cat(\Phi)\coloneqq\lim_{n\ra\infty} \frac{1}{n} \log \delta_G(\Phi^nG)\in[-\infty,\infty) ,
\]
where $\delta_G$ is the complexity function considered in Example \ref{eg:complexity}.
\end{Def}

The relation between categorical entropy and mass growth with respect to Bridgeland stability conditions has been studied by Ikeda \cite{Ikeda}.

\begin{Thm}[\cite{Ikeda}, Theorem 3.5] \label{Thm:Ikeda}
Let $\Phi\colon\D\ra\D$ be an endofunctor and $\sigma$ be a Bridgeland stability condition on $\D$.
Assume that $\D$ has a split generator $G\in\D$. Then
\[
\sup_{E\in\D} \Big\{
\limsup_{n\ra\infty} \frac{1}{n} \log m_\sigma(\Phi^nE) \Big\} =
\limsup_{n\ra\infty} \frac{1}{n} \log m_\sigma(\Phi^nG) \leq
h_\cat(\Phi).
\]
\end{Thm}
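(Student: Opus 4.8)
The plan is to deduce both statements from a single elementary comparison between the Bridgeland mass $m_\sigma$ and the complexity function $\delta_G$, exploiting the fact that a split generator detects all exponential growth rates. The one estimate I would establish first is: for every object $F\in\D$,
\[
m_\sigma(F)\le\delta_G(F)\cdot m_\sigma(G).
\]
To see this, recall that $m_\sigma$ is a mass function (Example~\ref{eg:stab}), so axiom~(1) of Definition~\ref{Def:Massfn} applied to the triangle $E\to E\oplus F'\to F'\to E[1]$ gives subadditivity $m_\sigma(E\oplus F')\le m_\sigma(E)+m_\sigma(F')$, while axiom~(2) gives $m_\sigma(G[n])=m_\sigma(G)$ for all $n\in\Z$. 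Choose a tower $0=Z_0\to Z_1\to\cdots\to Z_k=F\oplus\star$ realizing $k=\delta_G(F)$ (finite since $G$ is a split generator), with $\Cone(Z_{j-1}\to Z_j)\cong G[n_j]$. Applying axiom~(1) to each triangle $Z_{j-1}\to Z_j\to G[n_j]\to Z_{j-1}[1]$ and inducting on $j$ gives $m_\sigma(Z_k)\le k\cdot m_\sigma(G)$; then axiom~(3) yields $m_\sigma(F)\le m_\sigma(F\oplus\star)=m_\sigma(Z_k)\le\delta_G(F)\,m_\sigma(G)$.

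For the inequality $\le h_\cat(\Phi)$, I would simply apply the estimate with $F=\Phi^n G$, obtaining $m_\sigma(\Phi^n G)\le\delta_G(\Phi^n G)\,m_\sigma(G)$; dividing $\log$ by $n$ and passing to the $\limsup$, the constant factor contributes $\tfrac1n\log m_\sigma(G)\to0$, so $\limsup_n\tfrac1n\log m_\sigma(\Phi^n G)\le\lim_n\tfrac1n\log\delta_G(\Phi^n G)=h_\cat(\Phi)$, the limit on the right existing by \cite{DHKK}.

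For the equality $\sup_{E}\{\limsup_n\tfrac1n\log m_\sigma(\Phi^n E)\}=\limsup_n\tfrac1n\log m_\sigma(\Phi^n G)$, the direction $\ge$ is immediate by taking $E=G$. For $\le$, fix $E$ and a tower of length $k=\delta_G(E)$ building $E\oplus\star$ from shifts of $G$ as above. Since $\Phi$ is exact and additive, applying $\Phi^n$ produces a tower of the \emph{same} length $k$ building $\Phi^n E\oplus\Phi^n\star$ from shifts of $\Phi^n G$, and the induction above (now using $m_\sigma((\Phi^n G)[n_j])=m_\sigma(\Phi^n G)$) gives $m_\sigma(\Phi^n E)\le k\cdot m_\sigma(\Phi^n G)$. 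Because $k$ is independent of $n$, dividing $\log$ by $n$ and taking $\limsup$ gives $\limsup_n\tfrac1n\log m_\sigma(\Phi^n E)\le\limsup_n\tfrac1n\log m_\sigma(\Phi^n G)$; taking the supremum over $E\in\D$ finishes the proof.

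There is no genuine obstacle here; the one point that needs to be noted carefully is that the tower length $\delta_G(E)$ is unchanged when the exact functor $\Phi^n$ is applied, which is exactly what makes the bound $m_\sigma(\Phi^n E)\le\delta_G(E)\,m_\sigma(\Phi^n G)$ uniform in $n$ and lets the fixed multiplicative constant disappear after passing to growth rates. One should also dispatch the degenerate cases — for instance, if $\Phi^{n_0}G=0$ then (building from the tower) every $\Phi^n E$ eventually vanishes, all three quantities equal $-\infty$, and there is nothing to prove — but this is pure bookkeeping.
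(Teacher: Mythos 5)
Your argument is correct and is essentially the standard one from the cited source \cite{Ikeda}: the paper itself only quotes this theorem, and Ikeda's proof likewise rests on the estimate $m_\sigma(F)\le\delta_G(F)\,m_\sigma(G)$ (his Proposition~3.4) together with the observation that applying the exact functor $\Phi^n$ to a tower of length $\delta_G(E)$ yields the uniform bound $m_\sigma(\Phi^nE)\le\delta_G(E)\,m_\sigma(\Phi^nG)$, from which both the equality and the comparison with $h_\cat(\Phi)$ follow exactly as you describe. No gaps.
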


In particular, if $\Phi\in\Aut(\D)$ is pseudo-Anosov, then its categorical entropy is positive since
\[
h_\cat(\Phi) \geq \limsup_{n\ra\infty} \frac{1}{n} \log m_\sigma(\Phi^nG) = \log\lambda >0.
\]

\section{Examples}
\label{sec:examples}
In this section, we construct examples of weak pseudo-Anosov autoequivalences on certain quiver Calabi--Yau categories as well as the derived category  of  coherent sheaves on quintic Calabi--Yau threefolds.

\subsection{Examples from $A_2$-quivers} \label{eg:quiver}
We study the pseudo-Anosovness of autoequivalences on the Calabi--Yau category $\D(\Gamma_NA_2)$ of dimension $N\geq3$ associated to an $A_2$-quiver:
\[
\xymatrix{
\bullet_{_1}\ar[r]&\bullet_{_2} 
}
\]
The category $\D(\Gamma_NA_2)$ is defined to be the derived category of finite dimensional dg-modules over the Ginzburg $N$-Calabi--Yau dg-algebra of the $A_2$-quiver.
We recall some basic properties of $\D(\Gamma_NA_2)$ and refer to \cite{Gin,Keller,KellerSurvey,KQ15} for the definition.
By \cite[Theorem~6.3]{Keller}, $\D(\Gamma_NA_2)$ is an $N$-Calabi--Yau triangulated category, i.e.~for any pair of objects $E,F\in\D(\Gamma_NA_2)$, there are natural isomorphisms
\[
\Hom^\bullet(E,F) \cong \Hom^\bullet(F, E[N])^\vee.
\]
The Ginzburg dg algebra $\Gamma_NA_2$ is concentrated in negative degrees, therefore its derived category has a canonical $t$-structure given by the standard truncation functors. This induces a canonical $t$-structure on $\D(\Gamma_NA_2)$.

\begin{Prop}[{\cite[Section~2]{Amiot}},{\cite[Section~5.1]{KellerYang}}]
\label{prop:amiot}
There is a canonical bounded t-structure on $\D(\Gamma_NA_2)$ with the heart $\mathcal{H}_{\Gamma_NA_2}$ such that the zeroth homology functor $H_0\colon\D(\Gamma_NA_2)\ra\mathrm{mod}\text{-}kA_2$ induces an equivalence of abelian categories:
\[
H_0\colon\mathcal{H}_{\Gamma_NA_2}\xrightarrow{\sim}\mathrm{mod}\text{-}kA_2.
\]
Here $\mathrm{mod}\text{-}kA_2$ denotes the category of finite dimensional $kA_2$-modules, where $kA_2$ is the path algebra of the $A_2$-quiver.
\end{Prop}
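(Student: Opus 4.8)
The plan is to recognize Proposition~\ref{prop:amiot} as the standard description of the canonical $t$-structure on the derived category of a non-positively graded dg algebra, specialized to $\Gamma\coloneqq\Gamma_NA_2$. First I would record the two structural inputs. (i) $\Gamma$ is concentrated in non-positive degrees: all generators of the Ginzburg dg algebra of the $A_2$-quiver---the arrows, the reversed arrows, and the loops---sit in degrees $\leq 0$, hence so do their products. (ii) $H_0(\Gamma)\cong kA_2$: the $A_2$-quiver has no oriented cycles, hence no potential and no imposed relations, so $H_0(\Gamma)$ coincides with its degree-$0$ part, which is the path algebra $kA_2$. Granting this, the existence of the canonical $t$-structure on $\D(\Gamma)$---obtained from the good truncation functors, which send dg $\Gamma$-modules to dg $\Gamma$-modules precisely because $\Gamma$ is non-positively graded---is already recorded in the paragraph preceding the statement, and it is automatically bounded, since every object of $\D(\Gamma_NA_2)$ has finite-dimensional total homology and hence homology in only finitely many degrees. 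What remains, and what I would focus on, is the identification of the heart $\mathcal{H}_{\Gamma_NA_2}$ with $\mathrm{mod}\text{-}kA_2$ via $H_0$.

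For this I would exhibit an explicit quasi-inverse. The canonical surjection of dg algebras $\Gamma\twoheadrightarrow H_0(\Gamma)=kA_2$ induces a restriction-of-scalars functor $\iota\colon\mathrm{mod}\text{-}kA_2\ra\D(\Gamma)$; since $\iota(V)$ is a module placed in degree $0$, it has homology concentrated in degree $0$, so $\iota$ factors through $\mathcal{H}_{\Gamma_NA_2}$, and visibly $H_0\circ\iota=\mathrm{id}$. The crux is the converse natural isomorphism $\iota\circ H_0\cong\mathrm{id}_{\mathcal{H}}$. Given $M\in\mathcal{H}_{\Gamma_NA_2}$, its homology is concentrated in degree $0$, so the canonical truncation maps identify $M$, in $\D(\Gamma)$, with the module $H_0(M)$ regarded as a complex concentrated in degree $0$; because $\Gamma$ is non-positively graded, a degree count forces everything outside $\Gamma_0$ to act trivially on this complex and the $\Gamma_0$-action to descend to $H_0(\Gamma)=kA_2$, so it is exactly $\iota(H_0 M)$. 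Concretely, I would take the good truncation of $M$ that is quasi-isomorphic to $M$ and is still a dg $\Gamma$-module (using non-positivity), and check that its further canonical quotient onto $H_0(M)$ has acyclic kernel---the kernel agrees with $M$ away from degree $0$, where $M$ is already acyclic, and resolves $0$ in degree $0$ by construction---so the quotient is a quasi-isomorphism; lifting morphisms along these truncations gives naturality. Hence $\iota$ and $H_0|_{\mathcal{H}_{\Gamma_NA_2}}$ are mutually quasi-inverse, so $H_0$ restricts to an equivalence of categories $\mathcal{H}_{\Gamma_NA_2}\iso\mathrm{mod}\text{-}kA_2$, necessarily exact.

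The main obstacle is the homotopical bookkeeping in the crux step: verifying carefully that the good truncations of a dg $\Gamma$-module are again dg $\Gamma$-modules (this is exactly where non-positivity of $\Gamma$ is used), that the residual quotient onto $H_0$ has acyclic kernel, and that the resulting isomorphism $M\cong\iota(H_0 M)$ is natural in $M$. Everything else---existence and boundedness of the $t$-structure, the computation $H_0(\Gamma_NA_2)=kA_2$, and the formal deduction of ``equivalence of categories'' from $H_0\circ\iota=\mathrm{id}$ together with $\iota\circ H_0\cong\mathrm{id}$---is routine, or directly available from the references \cite{Amiot,KellerYang,Keller}.
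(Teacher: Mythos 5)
The paper gives no proof of this proposition---it is quoted directly from \cite{Amiot} and \cite{KellerYang}---and your argument is exactly the standard one those references use: $\Gamma_NA_2$ is non-positively graded with $H_0(\Gamma_NA_2)\cong kA_2$, the good truncations therefore preserve dg modules and define the canonical (bounded, on the finite-dimensional derived category) $t$-structure, and the zig-zag $M\leftarrow\tau_{\leq0}M\rightarrow H_0(M)$ of quasi-isomorphisms shows that restriction of scalars along $\Gamma_NA_2\twoheadrightarrow kA_2$ is quasi-inverse to $H_0$ on the heart. Your proof is correct and takes the same route as the cited sources.
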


Therefore, the heart $\mathcal{H}_{\Gamma_NA_2}$ is generated by the simple $\Gamma_NA_2$-modules $S_1,S_2$ which correspond to the two vertices of the $A_2$-quiver; and $\D(\Gamma_NA_2)$ also is generated by $S_1$, $S_2$ (and their shifts).
By \cite[Lemma~4.4]{Keller}, both $S_1,S_2$ are spherical objects in the $N$-Calabi--Yau category $\D(\Gamma_NA_2)$, namely, they satisfy
\[
\Hom^\bullet(S_1,S_2)=\C[-1] \text{ \ and \ }
\Hom^\bullet(S_i,S_i)=\C\oplus\C[-N]
\text{ for } i=1,2.
\]
For any spherical object $S$, one can associate to it an autoequivalence called the (Seidel--Thomas) \emph{spherical twist} $T_S$ \cite{ST}:
\[
T_S(E) = \Cone(\Hom^\bullet(S, E)\otimes S \ra E).
\]
We denote the spherical twist associated to $S_i$ by $T_i$.

The space of Bridgeland stability conditions on $\D(\Gamma_NA_2)$ has been studied in \cite{BQS, BS, Ikeda2}. There is a distinguished connected component of $\Stab(\D(\Gamma_NA_2))$ which contains all stability conditions whose heart $P((0,1])$ coincides with the canonical heart $\mathcal{H}_{\Gamma_NA_2}$ (see for instance \cite[Section 5]{Ikeda2} and references therein).
We choose $\Stab^\dagger(\D(\Gamma_NA_2))$ in Assumption (B) to be the distinguished connected component.

We prove a large class of autoequivalences on $\D(\Gamma_NA_2)$ are weak pseudo-Anosov.

\begin{Thm} \label{Thm:pA_quiver}
Let $N\geq3$ be an odd integer.
Let  $\Phi\in\Aut(\D(\Gamma_NA_2))$ be any composition of $T_1$ and $T_2^{-1}$
(e.g.~$T_1^{a_1}T_2^{-b_1}T_1^{a_2}T_2^{-b_2}\cdots$ for $a_i,b_i\geq0$)
that is neither $T_1^a$ nor $T_2^{-b}$.
Then $\Phi$ is weak pseudo-Anosov.
\end{Thm}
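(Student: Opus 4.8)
The plan is to identify, for the stability conditions in $\Stab^\dagger(\D(\Gamma_NA_2))$, the mass growth $h_{m_\sigma,\Phi}(E)$ as a single positive constant $\log\lambda$ for every nonzero $E$, where $\lambda$ will turn out to be the spectral radius of the induced map on $K_0(\D(\Gamma_NA_2))\cong\Z^2$. First I would compute this $K_0$-action. From $\Hom^\bullet(S_1,S_2)=\C[-1]$ and the $N$-Calabi--Yau property one gets $\chi(S_i,S_i)=1+(-1)^N$, which vanishes precisely because $N$ is odd; hence in the basis $([S_1],[S_2])$ the twists $T_1$ and $T_2^{-1}$ act by the unipotent matrices $\left(\begin{smallmatrix}1&1\\0&1\end{smallmatrix}\right)$ and $\left(\begin{smallmatrix}1&0\\1&1\end{smallmatrix}\right)$. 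Since $\Phi$ is a composition of $T_1$'s and $T_2^{-1}$'s which is neither $T_1^a$ nor $T_2^{-b}$, its $K_0$-matrix $w$ is a positive word involving \emph{both} generators; an elementary induction (the trace never decreases under right multiplication by either generator, and strictly increases unless the matrix is already a power of that generator) shows $\mathrm{tr}\,w\geq 3$. Thus $w$ is hyperbolic with eigenvalues $\lambda>1>\lambda^{-1}$, and since $\mathrm{tr}\,w\geq 3$ forces $\lambda$ to be a quadratic irrational, neither eigenline of $w$ is rational. Set $\lambda$ to be this eigenvalue.

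For the lower bound, if $[E]\neq 0$ in $K_0$ then $m_\sigma(\Phi^nE)\geq|Z_\sigma(\Phi^nE)|=|Z_\sigma(w^n[E])|$, which grows like $\lambda^n$ because $Z_\sigma$ is injective and $[E]$ has nonzero component along the expanding eigenline of $w$ (no nonzero integer vector lies on an eigenline); hence $h_{m_\sigma,\Phi}(E)\geq\log\lambda$. If $E\neq 0$ but $[E]=0$, then some truncation object $H^j_{\mathcal{H}_{\Gamma_NA_2}}(E)[-j]$ is nonzero and has nonzero class (a nonzero module has nonzero class), and it lies in the thick subcategory generated by $E$ via the truncation triangles; being built from $E$ by a fixed, $n$-independent sequence of cones, shifts and summands, it satisfies $m_\sigma(\Phi^n(\,\cdot\,))\leq C\,m_\sigma(\Phi^nE)$ with $C$ independent of $n$, so $h_{m_\sigma,\Phi}(E)\geq\log\lambda$ by the previous case. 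Hence $h_{m_\sigma,\Phi}(E)\geq\log\lambda>0$ for every nonzero $E$.

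For the upper bound I would choose $\sigma\in\Stab^\dagger(\D(\Gamma_NA_2))$ whose heart is $\mathcal{H}_{\Gamma_NA_2}$ and normalise $Z$ so that $|Z(S_1)|=|Z(S_2)|=|Z(E_0)|=1$ (possible with $\phi(S_1)-\phi(S_2)=2/3$), where $E_0$ is the unique non-simple indecomposable of $\mathcal{H}_{\Gamma_NA_2}\simeq\mathrm{mod}\text{-}kA_2$ (Proposition~\ref{prop:amiot}). For this $\sigma$ the only $\sigma$-semistable objects are the shifts of $S_1,S_2,E_0$, and $m_\sigma(F)=\nu(F)$, the total number of indecomposable Harder--Narasimhan factors of $F$; thus $h_{m_\sigma,\Phi}(F)$ is the exponential growth rate of $\nu(\Phi^nF)$. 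The core computation is to describe the Harder--Narasimhan filtration of $\Phi^nS_i$ inductively along the defining word of $\Phi$: each spherical twist is applied factor by factor (it is exact), one has $T_1S_1\cong S_1[1-N]$, $T_1S_2\cong E_0$, $T_2^{-1}S_1\cong E_0$, etc., and — using the cone triangle defining a spherical twist together with the fact that $\Hom^\bullet(S_i,X)$ is at most two-dimensional for $X\in\{S_1,S_2,E_0\}$ — one checks that the multiplicity vector $(\#S_1,\#S_2,\#E_0)$ of $\Phi^nF$ is dominated entrywise by $M_\Phi^n$ applied to that of $F$, where $M_\Phi$ is a product of copies of two explicit $3\times 3$ nonnegative integer matrices $M_{T_1},M_{T_2^{-1}}$ lifting $w$ along the class map $(a,b,c)\mapsto(a+c,b+c)$; since both matrices annihilate the kernel line $(1,1,-1)$ of that map, the spectral radius of $M_\Phi$ equals that of $w$, namely $\lambda$. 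Hence $\nu(\Phi^nG)\leq\|M_\Phi^n v_G\|$ grows at most like $\lambda^n$ for a split generator $G$, and Ikeda's identity $\sup_E h_{m_\sigma,\Phi}(E)=h_{m_\sigma,\Phi}(G)$ (Theorem~\ref{Thm:Ikeda}) gives $h_{m_\sigma,\Phi}(E)\leq\log\lambda$ for all $E$.

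Combining the two bounds, $h_{m_\sigma,\Phi}(E)=\log\lambda$ for every nonzero $E$ with $\lambda>1$, so the growth filtration of $\Phi$ is $0\subset\D_\lambda=\D$ with $\lambda>1$: $\Phi$ is pseudo-Anosov in the sense of Definition~\ref{Def:pAauto}. I expect the main obstacle to be the upper bound: in general the categorical entropy, hence the mass growth of a generator, can strictly exceed the $K_0$-spectral radius, so the argument genuinely needs the finite representation type of $\mathcal{H}_{\Gamma_NA_2}$ (which confines every semistable object to the three objects $S_1,S_2,E_0$ up to shift) together with the explicit action of the twists on them, and one must control the recombination of Harder--Narasimhan factors carefully so that no extra exponential growth appears beyond $\lambda$; this is also the step where the oddness of $N$ enters, ensuring $\lambda$ is the correct target. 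As an alternative that avoids pinning down the stretch factor, one could instead prove that $\Phi$ is irreducible in the sense of Definition~\ref{Def:irred} — a $\Phi$-invariant thick subcategory has $w$-invariant $K_0$-image, which by hyperbolicity of $w$ is either $0$ or of finite index, and one argues that only the trivial case survives — whence every $\D_\lambda$ is $0$ or $\D$ and the positivity of growth already established on $S_1$ forces $\Phi$ to be pseudo-Anosov; but establishing irreducibility requires controlling the thick subcategories of $\D(\Gamma_NA_2)$, which is itself delicate.
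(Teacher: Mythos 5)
Your proposal follows the paper's proof in all essentials: the same unipotent $K_0$-matrices with the trace-$\geq3$ and quadratic-irrationality argument; the lower bound $h_{m_\sigma,\Phi}(E)\geq\log\lambda$ extracted from $m_\sigma(\Phi^nE)\geq|Z_\sigma(w^n[E])|$ together with the fact that no nonzero integral class lies on an eigenline; and the upper bound obtained by tracking Harder--Narasimhan multiplicities of $\Phi^n$ of a generator under an algebraic stability condition and then invoking Theorem \ref{Thm:Ikeda}. One difference is the choice of chamber: the paper takes $\phi(S_1)<\phi(S_2)$, so that $E_0$ is destabilized and only shifts of $S_1,S_2$ are semistable, and the bookkeeping uses the same $2\times2$ matrices that act on $K_0$ (with Lemmas \ref{lem:sphericaltwist} and \ref{lem:swapHNfactors} controlling reordering and possible cancellation of factors); your opposite chamber, where $E_0$ is also stable, forces the $3\times3$ lift and the extra computation of $T_1E_0$ and $T_2^{-1}E_0$. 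That is a harmless variation --- Proposition \ref{prop:sameconnectedcomponent} makes the growth rate chamber-independent --- but it is strictly more work than the paper's choice.

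The one genuine gap is your treatment of nonzero objects with $[E]=0$. The truncation triangles exhibit $E$ as an iterated extension of its cohomology objects $H^j_{\mathcal{H}}(E)[-j]$, i.e.\ they place $E$ in the thick subcategory generated by the $H^j_{\mathcal{H}}(E)$, not the reverse; since $\D(\Gamma_NA_2)$ is not the derived category of its hereditary heart (e.g.\ $\Ext^N(S_i,S_i)\neq0$), a cohomology object of $E$ need not lie in the thick subcategory generated by $E$, so the inequality $m_\sigma(\Phi^nH^j_{\mathcal H}(E)[-j])\leq C\,m_\sigma(\Phi^nE)$ is unjustified. (The quantitative step itself is fine: if $F$ does lie in $\langle E\rangle_{\mathrm{thick}}$, then applying $\Phi^n$ to a fixed tower gives $m_\sigma(\Phi^nF)\leq\delta_E(F)\,m_\sigma(\Phi^nE)$; it is the membership that is not established.) One can reduce to indecomposable $E$, since mass is additive on direct sums and any summand with nonzero class already yields the lower bound, but for an indecomposable $E$ with $[E]=0$ a different argument is required. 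To be fair, the paper's written proof is silent on exactly this case --- it passes from ``$[E]$ is an integral vector, hence not an eigenvector of $\lambda'$'' to the conclusion, which presupposes $[E]\neq0$ --- so you have correctly isolated a point that needs an argument; the proposed patch just does not close it.
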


For the ease of notations, we make the following definition.

\begin{Def}
Let $\D$ be a triangulated category and $E,A_1,\ldots,A_n$ be objects in $\D$.
We write
\[
E\in\{A_1,A_2,\ldots,A_n\}
\]
if there exists a sequence of exact triangles
\[
\xymatrix@C=.5em{
& 0 \ar[rrrr] &&&& \star \ar[rrrr] \ar[dll] &&&& \star
\ar[rr] \ar[dll] && \ldots \ar[rr] && \star
\ar[rrrr] &&&& E. \ar[dll]  &  \\
&&& A_1 \ar@{-->}[ull] &&&& A_2 \ar@{-->}[ull] &&&&&&&& A_n \ar@{-->}[ull]
}
\]
\end{Def}

\begin{Lem}
\label{lem:sphericaltwist}
Let $N\geq3$ be an integer. In $\D(\Gamma_NA_2)$, we have:
    \begin{itemize}
        \item $T_1(S_1)=S_1[1-N]$ and $T_1(S_2)\in\{S_2,S_1\}$.
        \item $T_2(S_1)\in\{S_1,S_2[2-N]\}$ and $T_2(S_2)=S_2[1-N]$.
        \item $T_1^{-1}(S_1)=S_1[N-1]$ and $T_1^{-1}(S_2)\in\{S_1[N-2],S_2\}$.
        \item $T_2^{-1}(S_1)\in\{S_2,S_1\}$ and $T_2^{-1}(S_2)=S_2[N-1]$.
    \end{itemize}
\end{Lem}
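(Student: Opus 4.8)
The plan is to compute each spherical twist directly from the defining triangle
\[
T_S(E) = \Cone\bigl(\Hom^\bullet(S,E)\otimes S \ra E\bigr),
\]
using the known graded $\Hom$-spaces between $S_1$ and $S_2$ in the $N$-Calabi--Yau category $\D(\Gamma_NA_2)$, namely $\Hom^\bullet(S_1,S_2)=\C[-1]$, $\Hom^\bullet(S_2,S_1)=\C[1-N]$ (by Calabi--Yau duality), and $\Hom^\bullet(S_i,S_i)=\C\oplus\C[-N]$. First I would handle the ``diagonal'' cases $T_i(S_i)$ and $T_i^{-1}(S_i)$: here $\Hom^\bullet(S_i,S_i)\otimes S_i = S_i\oplus S_i[-N]$, and the evaluation map to $S_i$ is (up to scalar) the projection onto the first summand composed with the identity, so its cone is $S_i[-N][1]=S_i[1-N]$; this gives $T_i(S_i)=S_i[1-N]$, and inverting gives $T_i^{-1}(S_i)=S_i[N-1]$. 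The identity-component of the evaluation map being an isomorphism is the one point that needs a sentence of justification, but it is standard for spherical twists (see \cite{ST}).

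Next I would treat the ``off-diagonal'' cases. For $T_1(S_2)$: $\Hom^\bullet(S_1,S_2)\otimes S_1 = S_1[-1]$, and $T_1(S_2)=\Cone(S_1[-1]\ra S_2)$. Rotating the triangle $S_1[-1]\ra S_2\ra T_1(S_2)\ra S_1$ shows $T_1(S_2)$ sits in a triangle with $S_2$ and $S_1$ in the appropriate order, i.e.\ $T_1(S_2)\in\{S_2,S_1\}$ in the notation just introduced (the filtration $0\to S_2\to T_1(S_2)$ with cone $S_1$). Similarly $T_2(S_1)=\Cone(\Hom^\bullet(S_2,S_1)\otimes S_2\ra S_1)=\Cone(S_2[1-N]\ra S_1)$, giving a triangle $S_2[1-N]\ra S_1\ra T_2(S_1)\ra S_2[2-N]$, hence $T_2(S_1)\in\{S_1,S_2[2-N]\}$. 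The remaining two lines of the lemma follow by applying $T_1^{-1}$ (resp.\ $T_2^{-1}$) to the triangles just obtained, or equivalently by running the same cone computation for the inverse twist; e.g.\ from $S_2\ra T_1(S_2)\ra S_1\ra S_2[1]$ one applies $T_1^{-1}$ and uses $T_1^{-1}(S_1)=S_1[N-1]$ to read off $T_1^{-1}(S_2)\in\{S_1[N-2],S_2\}$, and likewise for $T_2^{-1}(S_1)$.

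I expect the main (minor) obstacle to be bookkeeping the shifts and the order of the factors in the two-step filtrations $\{A_1,A_2\}$: one must be careful that the triangle $A\ra B\ra C\ra A[1]$ produced by the cone construction places the generating object in the ``$A_1$'' slot and the cone in the ``$A_2$'' slot, matching the convention of the displayed diagram in the definition of $E\in\{A_1,\ldots,A_n\}$, and that the degree shifts from Calabi--Yau duality ($\Hom^\bullet(S_2,S_1)$ concentrated in degree $N-1$) are tracked correctly when passing to cones and to inverse twists. None of this is deep; the content is entirely the explicit $\Hom$-computation together with the definition of the spherical twist.
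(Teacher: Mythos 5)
Your proposal is correct and follows essentially the same route as the paper: the paper's proof is a one-line appeal to the defining triangle of the spherical twist plus the graded $\Hom$-computations (with $T_2^{-1}(S_1)$ worked out as the sample case by applying $T_2^{-1}$ to the triangle defining $T_2(S_1)$), and your write-up simply carries out all the cases explicitly, including the correct Calabi--Yau-duality degree for $\Hom^\bullet(S_2,S_1)$.
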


\begin{proof}
The lemma follows directly from the definition of spherical objects, spherical twists, and the fact that $\Hom^\bullet(S_1,S_2)=\C[-1]$.
For instance, $T_2^{-1}(S_1)\in\{S_2,S_1\}$ can be obtained by applying $T_2^{-1}$ to the exact triangle
$\Hom^\bullet(S_2,S_1)\otimes S_2\ra S_1\ra T_2(S_1) \ra$.
\end{proof}

The following lemma will be useful throughout this section.

\begin{Lem}
\label{lem:swapHNfactors}
Let $N\geq3$ be an integer.
Let $\sigma\in\Stab^\dagger(\D(\Gamma_NA_2))$ be a stability condition such that
$S_1,S_2$ and their shifts are the only indecomposable semistable objects of $\sigma$,
and their phases (with respect to $\sigma$) satisfy
\[
0<\phi(S_1)<\phi(S_2)<1.
\]
Suppose that $E\in\{S_{i_1}^{\oplus q_1}[j_1],S_{i_2}^{\oplus q_2}[j_2]\}$ for some $i_1,i_2\in\{1,2\}$, $q_1,q_2\in\Z_{>0}$, and $j_1,j_2\in\Z$.
Moreover, suppose that 
\[
\phi(S_{i_1})+j_1=\phi(S_{i_1}^{\oplus q_1}[j_1]) < \phi(S_{i_2}^{\oplus q_2}[j_2]) = \phi(S_{i_2})+j_2.
\]
Then
    \begin{enumerate}[label=(\alph*)]
        \item If $i_1\neq i_2$ or $j_2-j_1\neq1$, then $E\in\{S_{i_2}^{\oplus q_2}[j_2],S_{i_1}^{\oplus q_1}[j_1]\}$.
        \item If $i_1=i_2$ and $j_2-j_1=1$, then $E\in\{S_{i_2}^{\oplus q_2-t}[j_2],S_{i_1}^{\oplus q_1-t}[j_1]\}$ for some $t\geq0$.
    \end{enumerate}
In other words, one can exchange the order of $S_{i_1}^{\oplus q_1}[j_1]$ and $S_{i_2}^{\oplus q_2}[j_2]$ so that the phases are in decreasing order, with the caveat that some of the factors might disappear when $i_1=i_2$ and $j_2-j_1=1$.
\end{Lem}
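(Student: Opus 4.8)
The plan is to analyze the Harder--Narasimhan filtration of $E$ directly, using the hypothesis that $S_1, S_2$ and their shifts are the only indecomposable semistable objects. First I would observe that since $\phi(S_{i_1}^{\oplus q_1}[j_1]) < \phi(S_{i_2}^{\oplus q_2}[j_2])$, the two-step filtration $E\in\{S_{i_1}^{\oplus q_1}[j_1], S_{i_2}^{\oplus q_2}[j_2]\}$ presents $E$ as an extension with the semistable factors in the \emph{wrong} order; rewriting it amounts to understanding the group $\Hom^\bullet(S_{i_1}^{\oplus q_1}[j_1], S_{i_2}^{\oplus q_2}[j_2])$, which controls how the extension can be ``rotated.'' Concretely, $E$ sits in an exact triangle $S_{i_2}^{\oplus q_2}[j_2] \ra E \ra S_{i_1}^{\oplus q_1}[j_1] \xrightarrow{\delta} S_{i_2}^{\oplus q_2}[j_2+1]$ for some class $\delta\in\Hom(S_{i_1}^{\oplus q_1}[j_1], S_{i_2}^{\oplus q_2}[j_2+1]) = \Hom^{j_1-j_2-1}(S_{i_1}, S_{i_2})^{\oplus q_1 q_2}$, and I would complete $\delta$ to a triangle to produce the HN filtration with factors in decreasing order.

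The key step is a case analysis of the graded Hom spaces between the $S_i$. Using $\Hom^\bullet(S_1,S_2)=\C[-1]$, the $N$-Calabi--Yau property $\Hom^\bullet(S_2,S_1) \cong \Hom^\bullet(S_1,S_2[N])^\vee = \C[1-N]$, and $\Hom^\bullet(S_i,S_i)=\C\oplus\C[-N]$, I would check in which degrees $\Hom^k(S_{i_1},S_{i_2})$ can be nonzero. In case (a), when $i_1\neq i_2$ the relevant degree is $j_1-j_2-1$ which is never $0$ or $-1$ precisely when one of the obstruction-degree conditions holds; when $i_1=i_2$ but $j_2-j_1\neq 1$, the potentially nonzero degrees are $0$ and $-N$, and the constraint $0<\phi(S_{i_1})+j_1<\phi(S_{i_2})+j_2<\phi(S_{i_1})+j_1+1$ (which forces $j_2-j_1\in\{1\}$... more precisely $j_2 = j_1$ or $j_2 = j_1+1$, with $j_2=j_1$ excluded by strict phase inequality) rules these out, so $\delta=0$, $E \cong S_{i_1}^{\oplus q_1}[j_1]\oplus S_{i_2}^{\oplus q_2}[j_2]$, and $E\in\{S_{i_2}^{\oplus q_2}[j_2], S_{i_1}^{\oplus q_1}[j_1]\}$ trivially. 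In case (b), $i_1=i_2=:i$ and $j_2=j_1+1$, so $\delta\in\Hom^0(S_i^{\oplus q_1}, S_i^{\oplus q_2}) = \mathrm{Mat}_{q_2\times q_1}(\C)$; letting $t=\mathrm{rank}(\delta)$, a change of basis splits off $t$ copies of the ``trivial'' extension $\Cone(S_i \xrightarrow{\mathrm{id}} S_i[j_2])$, which is zero, leaving $E\in\{S_i^{\oplus q_2-t}[j_2], S_i^{\oplus q_1-t}[j_1]\}$.

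The main obstacle I anticipate is bookkeeping the degree conditions carefully enough to be sure that in case (a) the extension class genuinely lives in a vanishing $\Hom$-group — in particular handling the subtlety that $\phi(S_1)<\phi(S_2)$ together with integer shifts constrains $j_2-j_1$ to lie in $\{0,1\}$ before the strict inequality excludes $j_2=j_1$, and then verifying that the ``bad'' degrees $0$ and $-1$ (for $i_1\neq i_2$) or $0$ and $-N$ (for $i_1=i_2$) are avoided exactly under the stated hypotheses. Once the Hom computation is pinned down, the rest is a formal manipulation of triangles: splitting the trivial summands, reindexing, and checking that the resulting two-step filtration has strictly decreasing phases, which is immediate from the hypothesis $\phi(S_{i_1})+j_1 < \phi(S_{i_2})+j_2$.
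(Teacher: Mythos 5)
Your overall strategy — reduce everything to the connecting morphism of the given two-step filtration, get a direct sum in case (a) from a vanishing Hom-group, and handle case (b) by diagonalizing a degree-zero matrix — is the right one, and for case (a) it is the same route the paper takes. But the execution has a genuine error: you have written down the wrong triangle, and hence the wrong obstruction group. In the paper's convention $E\in\{A_1,A_2\}$ means $A_1$ is the \emph{subobject}, so the hypothesis is a triangle $S_{i_1}^{\oplus q_1}[j_1]\to E\to S_{i_2}^{\oplus q_2}[j_2]\xrightarrow{\ \delta\ }S_{i_1}^{\oplus q_1}[j_1+1]$, with $\delta\in\Hom(S_{i_2}[j_2],S_{i_1}[j_1+1])^{\oplus q_1q_2}=\Hom^{j_1+1-j_2}(S_{i_2},S_{i_1})^{\oplus q_1q_2}$. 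The triangle you write, with $S_{i_2}^{\oplus q_2}[j_2]$ as the subobject, is the \emph{conclusion}, not the hypothesis, and the group you compute, $\Hom(S_{i_1}[j_1],S_{i_2}[j_2+1])$, is not the obstruction: it genuinely fails to vanish in case (a) — e.g.\ for $i_1=1$, $i_2=2$, $j_1=j_2$ it is $\Hom^1(S_1,S_2)^{\oplus q_1q_2}=\C^{q_1q_2}\neq0$. Compounding this, the constraint you invoke, $\phi(S_{i_2})+j_2<\phi(S_{i_1})+j_1+1$ forcing $j_2-j_1\in\{0,1\}$, is not part of the hypothesis and is false in general (the lemma allows $j_2-j_1$ arbitrarily large; the phases only bound it below), and $j_2=j_1$ is \emph{not} excluded when $i_1=1$, $i_2=2$. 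With the correct group the argument does close up, exactly as in the paper: $\Hom^\bullet(S_1,S_2)$ sits in degree $1$, $\Hom^\bullet(S_2,S_1)$ in degree $N-1\geq2$, and $\Hom^\bullet(S_i,S_i)$ in degrees $0$ and $N$, while the phase hypothesis gives $j_1+1-j_2\leq1$ (and $\leq0$ when $(i_1,i_2)=(2,1)$), so $\delta=0$ unless $i_1=i_2$ and $j_2=j_1+1$.

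For case (b) your argument is correct and slightly different from the paper's: once $\delta\in\Hom^0(S_i^{\oplus q_2},S_i^{\oplus q_1})=\mathrm{Mat}_{q_1\times q_2}(\C)$, putting $\delta$ in normal form splits off $t=\mathrm{rank}(\delta)$ contractible cones and leaves $S_i^{\oplus q_1-t}[j_1]\oplus S_i^{\oplus q_2-t}[j_1+1]$. The paper instead takes the long exact sequence of cohomology objects with respect to the canonical heart and uses simplicity of $S_i$ to identify $H^{-j-1}_{\mathcal A}(E)$ and $H^{-j}_{\mathcal A}(E)$; the two arguments produce the same $t$ (the rank of the same map), and yours is arguably more elementary. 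So the proposal is salvageable, but only after replacing the obstruction group and deleting the spurious upper bound on $j_2-j_1$.
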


\begin{proof}
If $\phi(S_{i_1}^{\oplus q_1}[j_1]) < \phi(S_{i_2}^{\oplus q_2}[j_2])$, then $\{S_{i_1}^{\oplus q_1}[j_1],S_{i_2}^{\oplus q_2}[j_2]\}$ is of one of the following three types:
\begin{itemize}
\item $\{S_{i}^{\oplus q_1}[j_1],S_{i}^{\oplus q_2}[j_1+n]\}$ for some $i\in\{1,2\}$ and $n>0$,
\item $\{S_{2}^{\oplus q_1}[j_1],S_{1}^{\oplus q_2}[j_1+n]\}$ for some $n>0$,
\item $\{S_{1}^{\oplus q_1}[j_1],S_{2}^{\oplus q_2}[j_1+n]\}$ for some $n\geq0$.
\end{itemize}
Observe that $\Hom(S_{i_2}^{\oplus q_2}[j_2],S_{i_1}^{\oplus q_1}[j_1+1])=0$ holds for all three types, except for $i_1=i_2$ and $j_2-j_1=1$.
Hence if $i_1\neq i_2$ or $j_2-j_1\neq1$, then
$E\cong S_{i_1}^{\oplus q_1}[j_1] \oplus S_{i_2}^{\oplus q_2}[j_2]$.
This proves part (a).

If $i\coloneqq i_1=i_2$ and $j\coloneqq j_1=j_2-1$, then there is an exact triangle
\[
S_{i}^{\oplus q_1}[j] \ra E \ra S_{i}^{\oplus q_2}[j+1] \ra.
\]
Recall that any object $E$ in $\D$ has a unique filtration with respect to the heart of a bounded t-structure $\cA\coloneqq\mathcal{H}_{\Gamma_NA_2}=\left<S_1,S_2\right>_\mathrm{ext}$ on $\D$:
\[
\xymatrix@C=0em{
& 0 \ar[rrrr] &&&& \star \ar[rrrr] \ar[dll] &&&& \star
\ar[rr] \ar[dll] && \ldots \ar[rr] && \star
\ar[rrrr] &&&& E, \ar[dll]  &  \\
&&& H^{-k_1}_\cA(E)[k_1] \ar@{-->}[ull] &&&& H^{-k_2}_\cA(E)[k_2] \ar@{-->}[ull] &&&&&&&& H^{-k_m}_\cA(E)[k_m] \ar@{-->}[ull]
}
\]
where $k_1>k_2>\cdots>k_m$ and $H^{-k_i}_\cA(E)\in\cA$ for each $i$.
The objects $H^{-k_i}_\cA(E)$ are sometimes called the cohomological objects of $E$ with respect to $\cA$. Any exact triangle induces a long exact sequence of cohomological objects.
Hence the exact triangle $S_{i}^{\oplus q_1}[j] \ra E \ra S_{i}^{\oplus q_2}[j+1] \ra$ induces an exact sequence in $\cA$:
\[
0\ra H^{-j-1}_\cA(E) \ra S_{i}^{\oplus q_2} \ra S_{i}^{\oplus q_1} \ra H^{-j}_\cA(E) \ra 0.
\]
Therefore we have
\[
H^{-j-1}_\cA(E) = S_i^{\oplus q_2-t} \text{ \ and \ }
H^{-j}_\cA(E) = S_i^{\oplus q_1-t}
\]
for some $t\geq0$, since $S_i$ is a simple object in $\cA$.
Thus there is an exact triangle
\[
S_i^{\oplus q_2-t}[j+1] \ra E \ra S_i^{\oplus q_1-t}[j] \ra.
\]
This proves part (b).
\end{proof}

We are now ready to prove Theorem \ref{Thm:pA_quiver}.

\begin{proof}[Proof of Theorem \ref{Thm:pA_quiver}]
Since $\mathcal{H}_{\Gamma_NA_2}\cong\mathrm{mod}\text{-}kA_2$ is a finite length abelian category with finitely many simple objects $\{S_1,S_2\}$ (see Proposition~\ref{prop:amiot}), any pair of points $(z_1,z_2)\in\mathbb{H}^2$ in the upper half-plane gives a Bridgeland stability condition on $\D(\Gamma_NA_2)$, such that the central charges of $S_1$ and $S_2$ are $z_1$ and $z_2$, respectively (\cite[Lemma~5.2]{Bri09}).
We choose $z_1,z_2\in\mathbb H$ such that $z_1=e^{i\pi\phi_1}$ and $z_2=e^{i\pi\phi_2}$, where $0<\phi_1<\phi_2<1$, and let $\sigma=(Z,P)\in\Stab^\dagger(\D(\Gamma_NA_2))$ denotes the corresponding stability condition.
Then we have
\[
0<\phi(S_1)=\phi_1<\phi(S_2)=\phi_2<1 \ \ \mathrm{and} \ \ |Z(S_1)|=|Z(S_2)|=1.
\]
The only indecomposable objects in $\mathcal{H}_{\Gamma_NA_2}\cong\mathrm{mod}\text{-}kA_2$ are the simple objects $S_1,S_2$, and an object $E$ given by the extension between these two objects:
\[
0\ra S_2\ra E\ra S_1\ra0.
\]
Since we choose $z_1,z_2$ so that $0<\phi(S_1)<\phi(S_2)<1$, the object $E$ is not semistable with respect to $\sigma$. Therefore, the only indecomposable $\sigma$-semistable objects are $S_1,S_2$, and their shifts.

Let us compute the mass growth of the split generator $S_1\oplus S_2$ of $\D(\Gamma_NA_2)$ with respect to the autoequivalence $\Phi$ and the stability condition $\sigma$. 
By Lemma \ref{lem:sphericaltwist}, since $\Phi$ is a composition of positive powers of $T_1$ and $T_2^{-1}$, for any $n\in\N$ we have
\[
\Phi^n (S_1\oplus S_2)\in \{ S_{i_1}^{\oplus q_1}[(N-1)j_1], S_{i_2}^{\oplus q_2}[(N-1)j_2], \ldots, S_{i_k}^{\oplus q_k}[(N-1)j_k] \}
\]
for some $i_1,\ldots,i_k\in\{1,2\}$, $j_1,\ldots,j_k\in\Z$, and $q_1,\cdots, q_k\in\N$.
By Lemma \ref{lem:swapHNfactors}, one can reorder the sequence so that the objects are of decreasing phases (since $N\geq3$, so case (b) in Lemma \ref{lem:swapHNfactors} does not happen in the sequence).
This gives the Harder--Narasimhan filtration of $\Phi^n (S_1\oplus S_2)$.
By the choice of the stability condition $\sigma$,
the mass of $\Phi^n(S_1\oplus S_2)$ is simply the sum of the powers
$q_1+\cdots+q_k$ in the sequence.
Hence in order to compute the mass growth of $\Phi^n(S_1\oplus S_2)$,
one only needs to compute the increment of the number of $S_1$'s and $S_2$'s 
when $\Phi$ acts on $S_1$ and $S_2$.

The spherical twist $T_1$ corresponds to the matrix
\[
\begin{pmatrix} 
1 & 1 \\
0 & 1 
\end{pmatrix},
\]
i.e.~if there is a sequence with $x_1$ $S_1$'s and $x_2$ $S_2$'s ($S_i^{\oplus q}$ counts as $q$ $S_i$'s),
then after applying $T_1$, the new sequence will have $(x_1+x_2)$ $S_1$'s and $x_2$ $S_2$'s.
This follows from the fact that $T_1(S_1) = S_1[1-N]$ and $T_1(S_2)\in\{S_2,S_1\}$.
Similarly, $T_2^{-1}$  corresponds to the matrix
\[
\begin{pmatrix} 
1 & 0 \\
1 & 1 
\end{pmatrix}.
\]

Let $\Phi=T_1^{a_1}T_2^{-b_1}T_1^{a_2}T_2^{-b_2}\cdots T_1^{a_p}T_2^{-b_p}$ for some $a_i,b_i\geq0$.
Then the corresponding matrix of $\Phi$ is
\[
M_{\Phi} := 
\begin{pmatrix} 
1 & 1 \\
0 & 1 
\end{pmatrix}^{a_1}
\begin{pmatrix} 
1 & 0 \\
1 & 1 
\end{pmatrix}^{b_1}
\cdots
\begin{pmatrix} 
1 & 1 \\
0 & 1 
\end{pmatrix}^{a_p}
\begin{pmatrix} 
1 & 0 \\
1 & 1 
\end{pmatrix}^{b_p} \in \SL(2,\Z).
\]
Since $\Phi$ is neither $T_1^{a}$ nor $T_2^{-b}$,
the corresponding matrix $M_\Phi$ has trace $t\geq3$
and eigenvalues
$
\frac{t\pm\sqrt{t^2-4}}{2}\notin\Q.
$
Since $(1,1)^T$ is not an eigenvector of the eigenvalue $\frac{t-\sqrt{t^2-4}}{2}$,
one can conclude that the mass growth of $S_1\oplus S_2$ is
\[
\lim_{n\ra\infty}\frac{1}{n}\log m_\sigma(\Phi^n(S_1\oplus S_2)) = \log \frac{t+\sqrt{t^2-4}}{2} =: \log\lambda>0.
\]

To prove that $\Phi$ is weak pseudo-Anosov, one needs to show that the mass growth of any object $E$ is also $\log\lambda$ provided $[E]\neq0$ in $K_0(\D)$ .
By Theorem \ref{Thm:Ikeda}, we have
\[
\limsup_{n\ra\infty} \frac{1}{n} \log |Z_\sigma(\Phi^nE)| \leq
\limsup_{n\ra\infty} \frac{1}{n} \log m_\sigma(\Phi^nE) \leq
\lim_{n\ra\infty}\frac{1}{n}\log m_\sigma(\Phi^n(S_1\oplus S_2)) = \log\lambda.
\]
On the other hand, since $N$ is an odd number, the actions of $T_1$ and $T_2^{-1}$ on the Grothendieck group $K_0(\D(\Gamma_NA_2))$ is given by the same matrices
$
\begin{pmatrix} 
1 & 1 \\
0 & 1 
\end{pmatrix}
$
and
$
\begin{pmatrix} 
1 & 0 \\
1 & 1 
\end{pmatrix},
$
with respect to the basis $\{[S_1],[S_2]\}\subset K_0(\D(\Gamma_NA_2))$ (cf.~Lemma~\ref{lem:sphericaltwist}).-
By \cite{Bri}, one can deform $\sigma$ if necessary so that the kernel of $Z_\sigma$ does not contain any eigenvector of $M_\Phi$.
Hence the (exponential) growth of $|Z_\sigma(\Phi^nE)|$ is $\log\lambda$ unless $[E]\in K_0(\D(\Gamma_NA_2))$ is an eigenvector of the smaller eigenvalue 
\[
\lambda' = \frac{t-\sqrt{t^2-4}}{2},
\]
which is impossible since $[E]$ is a nonzero integral vector.
Hence
\[
\limsup_{n\ra\infty} \frac{1}{n} \log |Z_\sigma(\Phi^nE)| =
\limsup_{n\ra\infty} \frac{1}{n} \log m_\sigma(\Phi^nE) =
\log\lambda
\]
for any  object $E$ with $[E]\neq0$,
therefore $\Phi$ is a weak pseudo-Anosov autoequivalence.
\end{proof}

\begin{Rmk}
The same argument also works for compositions of $T_1, T_2^{-1},\ldots,
T_{2n-1}, T_{2n}^{-1}$ on the $N$-Calabi--Yau category associated to the
$A_{2n}$-quiver ($N\geq3$ odd):
\[
\xymatrix{
\bullet_{_1}\ar[r]&\bullet_{_2} & \ar[l]\bullet_{_3}\ar[r]&\cdots
&\ar[l]\bullet_{_{2n-1}}
\ar[r]&\bullet_{_{2n}},
}
\]
provided there is no integral vector in the span of the eigenvectors of eigenvalues other than the spectral radius.
The reason is that the action of $T_1, T_2^{-1},\ldots,
T_{2n-1}, T_{2n}^{-1}$
on the Grothendieck group $K_0(\D(\Gamma_NA_{2n}))$
is represented by non-negative matrices.
For instance, one can show that $T_1T_2^{-1}T_3T_4^{-1}$ on
$\D_{4}^N$ is weak pseudo-Anosov.
This autoequivalence has a geometric interpretation as the total monodromy acting on the Fukaya category of the fiber of a Lefschetz fibration whose vanishing cycles are the $2n$ spherical objects $S_1,\ldots,S_{2n}$. 
\end{Rmk}

On the other hand, the pseudo-Anosov autoequivalences we find in Theorem \ref{Thm:pA_quiver} are not
DHKK pseudo-Anosov.
In fact, we prove the following stronger statement.

\begin{Prop} \label{Prop:no_DHKK}
Let $\D$ be a triangulated category.
If for any $\sigma\in\Stab^\dagger(\D)$ there are only finitely many $\sigma$-stable objects (up to shifts),
then $\D$ does not possess any DHKK pseudo-Anosov autoequivalence.
\end{Prop}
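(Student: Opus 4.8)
The plan is to argue by contradiction. Suppose $\Phi\in\Aut(\D)$ were DHKK pseudo-Anosov, so that there exist $\sigma\in\Stab^\dagger(\D)$ and $g=(T,f)\in\widetilde{\GL^+(2,\R)}$ with $\Phi\cdot\sigma=\sigma\cdot g$, where $T=\pi(g)$ is $\begin{pmatrix}r^{-1}&0\\0&r\end{pmatrix}$ or $\begin{pmatrix}r&0\\0&r^{-1}\end{pmatrix}$ for some $|r|>1$. First I would observe that $\Phi$ permutes the set of $\sigma$-stable objects up to shift. Since $\Phi$ intertwines $\sigma$ with $\Phi\cdot\sigma$, it carries $\sigma$-stable objects bijectively onto $(\Phi\cdot\sigma)$-stable objects; and the $\widetilde{\GL^+(2,\R)}$-action only relabels phases (its slicing is $\phi\mapsto P_\sigma(f(\phi))$, the same full subcategories of $\D$), so $(\sigma\cdot g)$-stable and $\sigma$-stable objects coincide as a set. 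Hence $\{\Phi(A):A\ \sigma\text{-stable}\}$ equals the set of $\sigma$-stable objects; as $\Phi(A[1])=\Phi(A)[1]$ this passes to a bijection of the set of shift-classes of $\sigma$-stable objects, which is finite by hypothesis. (Such objects exist whenever $\D\neq0$, since every nonzero object is an iterated extension of stable objects via its Harder--Narasimhan and Jordan--H\"older filtrations; if $\D=0$ there is nothing to prove.)

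Consequently the permutation induced by $\Phi$ on this finite set has some finite order $k\geq1$, so for every $\sigma$-stable $A$ there is an integer $n_A$ with $\Phi^k(A)\cong A[n_A]$. I would then combine this with the behaviour of central charges: from $\Phi\cdot\sigma=\sigma\cdot g$ one reads off $Z_\sigma(\Phi^{-1}E)=T^{-1}Z_\sigma(E)$ for every $E\in\D$, viewing $Z_\sigma$ as a function valued in $\C\cong\R^2$ and $T$ as a linear automorphism of $\R^2$; iterating gives $Z_\sigma(\Phi^kE)=T^kZ_\sigma(E)$. Applying this to a $\sigma$-stable $A$ and using $\Phi^k(A)\cong A[n_A]$ yields $(-1)^{n_A}Z_\sigma(A)=T^kZ_\sigma(A)$, so the nonzero vector $Z_\sigma(A)\in\R^2$ is an eigenvector of $T^k$ with eigenvalue $\pm1$. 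But $T^k$ is diagonal with entries $r^k$ and $r^{-k}$, and $|r^{\pm k}|=|r|^{\pm k}\neq1$ because $|r|>1$; in particular $\pm1$ is not an eigenvalue of $T^k$. This contradiction proves the proposition.

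The only step needing care is the first: verifying that a DHKK pseudo-Anosov $\Phi$ literally permutes the $\sigma$-stable objects modulo shift. Once the finiteness hypothesis is available this forces the periodicity relation $\Phi^k(A)\cong A[n_A]$, and in place of the support property used in the proof of Theorem~\ref{Thm:more_general} one only needs the elementary observation that a diagonal matrix with eigenvalues of modulus $\neq1$ has no eigenvalue equal to $\pm1$.
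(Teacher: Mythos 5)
Your proposal is correct and follows essentially the same route as the paper: use the finiteness hypothesis to find a power $\Phi^k$ fixing every $\sigma$-stable object up to shift, then derive a contradiction with the stretching condition. The paper phrases the last step by noting that stable classes generate $K_0(\D)$ so $\Phi^N$ acts trivially there, whereas you make the contradiction explicit by exhibiting $Z_\sigma(A)$ as an eigenvector of $T^k$ with eigenvalue $\pm1$; this is a minor (and if anything slightly more careful) variant of the same argument.
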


\begin{proof}
Let $\Phi\in\Aut(\D)$ be a DHKK pseudo-Anosov autoequivalence
and assume the triple $(\Phi,\sigma,g)$ satisfies the conditions in Definition \ref{Def:DHKK}.
Then $\Phi$ preserves the set of all $\sigma$-stable objects.
Since there are only finitely many $\sigma$-stable objects up to shifts,
there exists some $N>0$ such that $\Phi^N$ preserves all $\sigma$-stable objects up to shifts by even integers.
Note that the classes of $\sigma$-stable objects generate $K_0(\D)$.
Therefore $\Phi^N$ acts trivially on $K_0(\D)$.
This contradicts with the stretching/contracting condition of DHKK pseudo-Anosov autoequivalences.
\end{proof}

Recall that an isometry $f$ of a metric space $(X,d)$ is called \emph{hyperbolic} if
its \emph{translation length}
$
\tau(f)\coloneqq\inf_{x\in X}\{ d(x , f(x)) \}
$
is positive and the infimum is achieved by some $x\in X$.
It is proved by Kikuta \cite{Kikuta} that
any DHKK pseudo-Anosov autoequivalence $\Phi$ acts hyperbolically on $(\Stab^\dagger(\D)/\C, \bar d)$, and $\tau(\Phi)=\log\lambda$.
We note that it is more interesting to study the hyperbolicity on the quotient space $\Stab^\dagger(\D)/\C$ rather than
$\Stab^\dagger(\D)$,
because trivial examples like the shift functors $[k]$ on any triangulated category act hyperbolically on $\Stab^\dagger(\D)$.

We prove the hyperbolicity of the ``palindromic" (see the statement of Theorem \ref{Thm:hyperbolic} for the definition) pseudo-Anosov autoequivalences constructed in Theorem \ref{Thm:pA_quiver}.
Note that the proof is less straightforward than it is for the DHKK pseudo-Anosov autoequivalences in \cite{Kikuta}.
One of the key ingredients in the proof, which is also used in the classical theory of pseudo-Anosov maps,
is the Perron--Frobenius theorem.
It states that a real square matrix with positive entries has a unique largest real eigenvalue $\lambda$ and that its corresponding eigenvector can be chosen to have positive components.
Moreover, the absolute value of any other eigenvalue is strictly less than $\lambda$.

We say an autoequivalence  $\Phi\in\Aut(\D(\Gamma_NA_2))$ is a \emph{palindromic composition} of $T_1$ and $T_2^{-1}$ if it can be written as
$$
\Phi=T_1^{a_1}T_2^{-b_1}T_1^{a_2}T_2^{-b_2}\cdots T_1^{a_p}, \text{  \quad such that } 
(a_1,b_1,\ldots,a_p)= (a_p,b_{p-1},a_{p-1},\ldots,b_1,a_1).
$$
As before, we consider the following matrix
\[
M_{\Phi} \coloneqq 
\begin{pmatrix} 
1 & 1 \\
0 & 1 
\end{pmatrix}^{a_1}
\begin{pmatrix} 
1 & 0 \\
1 & 1 
\end{pmatrix}^{b_1}
\cdots
\begin{pmatrix} 
1 & 1 \\
0 & 1 
\end{pmatrix}^{a_p}.
\]

\begin{Thm} \label{Thm:hyperbolic}
Let $N\geq3$ be an odd integer.
Let  $\Phi\in\Aut(\D(\Gamma_NA_2))$ be a palindromic composition of $T_1$ and $T_2^{-1}$, which is neither $T_1^a$ nor $T_2^{-b}$.
Then $\Phi$ acts hyperbolically on $\cM_{\Stab^\dagger}(\D(\Gamma_NA_2))/\R^+$, with translation length $\tau(\Phi)=\log\lambda$ where $\lambda$ is the stretch factor of $\Phi$.

If $\Phi$ satisfies an additional condition that  $\log\rho(M_\Phi)\geq\frac{N-1}{2}(\sum a_i+\sum b_i)$, then it also acts hyperbolically on $\Stab^\dagger(\D(\Gamma_NA_2))/\C$, with the same translation length.
\end{Thm}

We prove the following lemma before proving Theorem \ref{Thm:hyperbolic}.

\begin{Lem}
\label{lemma:boundofphase}
Let $\sigma\in\Stab^\dagger(\D(\Gamma_NA_2))$ be a stability condition such that
$S_1,S_2$ and their shifts are the only indecomposable semistable objects of $\sigma$,
and their phases satisfy
$$
0<\phi(S_1)<\phi(S_2)<1.
$$
Suppose that 
\[
E \in \{S_{i_1}^{\oplus q_1}[j_1], \ S_{i_2}^{\oplus q_2}[j_2], \ldots, \  S_{i_k}^{\oplus q_k}[j_k]\},
\]
where $i_1,\ldots,i_k\in\{1,2\}$, $j_1,\ldots,j_k\in\Z$, and $q_1,\ldots,q_k\in\N$.
Then
\[
\min_{1\leq\ell\leq k} \{j_\ell+\phi(S_{i_\ell})\} \leq
\phi_\sigma^-(E) \leq \phi_\sigma^+(E)
\leq \max_{1\leq\ell\leq k} \{j_\ell+\phi(S_{i_\ell})\}
\]
and
\[
m_\sigma(E) \leq \sum_{\ell=1}^k q_\ell |Z_\sigma(S_{i_\ell})|.
\]
\end{Lem}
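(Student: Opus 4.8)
The plan is to prove the two displayed bounds simultaneously by induction on the number $k$ of factors appearing in the hypothesis $E \in \{S_{i_1}^{\oplus q_1}[j_1], \ldots, S_{i_k}^{\oplus q_k}[j_k]\}$. Unwinding the definition of this bracket notation, it supplies a tower $0 = E_0 \to E_1 \to \cdots \to E_k = E$ together with exact triangles
\[
E_{\ell-1} \to E_\ell \to S_{i_\ell}^{\oplus q_\ell}[j_\ell] \to E_{\ell-1}[1], \qquad 1 \le \ell \le k,
\]
and the induction runs over the truncations $E_\ell$, each of which lies in $\{S_{i_1}^{\oplus q_1}[j_1], \ldots, S_{i_\ell}^{\oplus q_\ell}[j_\ell]\}$.

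Two elementary facts about the fixed stability condition $\sigma$ will do all the work. First, for any exact triangle $A \to B \to C \to A[1]$ one has the monotonicity
\[
\phi_\sigma^+(B) \le \max\{\phi_\sigma^+(A),\, \phi_\sigma^+(C)\}, \qquad \phi_\sigma^-(B) \ge \min\{\phi_\sigma^-(A),\, \phi_\sigma^-(C)\},
\]
with no assumption on the relative order of the phases of $A$ and $C$; this is a standard consequence of the Harder--Narasimhan axioms (for a semistable $D$ of phase $\psi > \max\{\phi_\sigma^+(A),\phi_\sigma^+(C)\}$, axiom (3) applied to the semistable factors of $A$ and of $C$ gives $\Hom(D,A) = \Hom(D,C) = 0$, hence $\Hom(D,B) = 0$ by the long exact sequence; testing against the top semistable factor of $B$ then forces $\phi_\sigma^+(B) \le \psi$, and the statement for $\phi_\sigma^-$ is dual). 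Second, $m_\sigma$ is a mass function (Example~\ref{eg:stab}), hence subadditive on exact triangles; moreover, since each $P(\phi)$ is closed under finite direct sums, $S_i^{\oplus q}[j]$ is $\sigma$-semistable of phase $j+\phi(S_i)$, so that $m_\sigma(S_i^{\oplus q}[j]) = q\,|Z_\sigma(S_i)|$ and $\phi_\sigma^+(S_i^{\oplus q}[j]) = \phi_\sigma^-(S_i^{\oplus q}[j]) = j + \phi(S_i)$.

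The base case $k=1$ is then immediate. For the inductive step, put $E' \coloneqq E_{k-1}$, so that $E' \in \{S_{i_1}^{\oplus q_1}[j_1], \ldots, S_{i_{k-1}}^{\oplus q_{k-1}}[j_{k-1}]\}$ and there is an exact triangle $E' \to E \to S_{i_k}^{\oplus q_k}[j_k] \to E'[1]$. Feeding the inductive hypothesis for $E'$ together with the values computed above for $S_{i_k}^{\oplus q_k}[j_k]$ into the monotonicity of $\phi_\sigma^\pm$ gives
\[
\phi_\sigma^+(E) \le \max\bigl\{\phi_\sigma^+(E'),\; j_k+\phi(S_{i_k})\bigr\} \le \max_{1\le\ell\le k}\{j_\ell+\phi(S_{i_\ell})\},
\]
and symmetrically $\phi_\sigma^-(E) \ge \min_{1\le\ell\le k}\{j_\ell+\phi(S_{i_\ell})\}$, the middle inequality $\phi_\sigma^-(E)\le\phi_\sigma^+(E)$ being automatic; while subadditivity of mass gives $m_\sigma(E) \le m_\sigma(E') + m_\sigma(S_{i_k}^{\oplus q_k}[j_k]) \le \sum_{\ell=1}^k q_\ell\,|Z_\sigma(S_{i_\ell})|$. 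This closes the induction.

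I do not anticipate a serious obstacle here; the only things needing a little care are getting the order and sign conventions in the monotonicity of $\phi_\sigma^\pm$ right --- in particular noting that it holds with no hypothesis on the phases of $A$ and $C$ --- and keeping the mass statement an inequality, so that no non-splitting or stability information about $E$ itself is ever invoked. In fact the standing hypotheses on $\sigma$ (that $S_1,S_2$ and their shifts are the only indecomposable semistable objects, and $0<\phi(S_1)<\phi(S_2)<1$) play no role in this lemma beyond the semistability of $S_1$ and $S_2$; they enter only in the applications of the lemma.
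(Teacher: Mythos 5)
Your proof is correct, but it takes a genuinely different route from the paper's. The paper proves the lemma by actually constructing the Harder--Narasimhan filtration of $E$: it repeatedly applies the swap lemma (Lemma~\ref{lem:swapHNfactors}) to reorder adjacent out-of-order factors, invoking the octahedral axiom, and observes that the surviving factors form a sub-multiset of the original ones (some factors may cancel in the $i_\ell=i_{\ell+1}$, $j_{\ell+1}=j_\ell+1$ case), whence the bounds. You instead run a soft induction on $k$ using only two general facts: subadditivity of $m_\sigma$ on exact triangles (Example~\ref{eg:stab}) and the standard monotonicity $\phi_\sigma^+(B)\le\max\{\phi_\sigma^+(A),\phi_\sigma^+(C)\}$, $\phi_\sigma^-(B)\ge\min\{\phi_\sigma^-(A),\phi_\sigma^-(C)\}$ for any exact triangle, whose proof you correctly sketch. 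Your observation that the standing hypotheses on $\sigma$ (only $S_1,S_2$ and shifts semistable, $0<\phi(S_1)<\phi(S_2)<1$) are not needed is accurate, and in that sense your argument is more elementary and more general --- it would work for any collection of $\sigma$-semistable building blocks in any triangulated category. What the paper's harder route buys is the explicit identification of the HN factors of $E$ as a sub-multiset of the given ones; this extra information is not needed for the stated inequalities but is used elsewhere (e.g.\ to get the exact equality $m_\sigma(\Phi^nS_1)=\lambda^n m_\sigma(S_1)$ in the proof of Theorem~\ref{Thm:hyperbolic}), so the swap lemma is developed anyway and the paper simply reuses it here. The only point to handle with a word in your write-up is the degenerate case where some truncation $E_\ell$ vanishes, so that $\phi_\sigma^\pm(E_\ell)$ is undefined; this is harmless (the triangle then identifies $E_{\ell+1}$ with the next semistable piece) but worth a sentence.
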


\begin{proof}
Firstly, if $j_1+\phi(S_{i_1})>\ldots>j_k+\phi(S_{i_k})$, then $\{S_{i_1}^{\oplus q_1}[j_1], \ S_{i_2}^{\oplus q_2}[j_2], \ldots, \  S_{i_k}^{\oplus q_k}[j_k]\}$ gives the Harder--Narasimhan filtration of $E$, hence we have
\[
\phi_\sigma^-(E) =j_k+\phi(S_{i_k}) = \min_{1\leq\ell\leq k} \{j_\ell+\phi(S_{i_\ell})\}, \ 
\phi_\sigma^+(E) =j_1+\phi(S_{i_1})= \max_{1\leq\ell\leq k} \{j_\ell+\phi(S_{i_\ell})\},
\]
and
\[
m_\sigma(E) = \sum_{\ell=1}^k q_\ell |Z_\sigma(S_{i_\ell})|.
\]

Now suppose $j_\ell+\phi(S_{i_\ell})< j_{\ell+1}+\phi(S_{i_{\ell+1}})$ for some $\ell$.
By Lemma \ref{lem:swapHNfactors}, one can exchange the order of $\{S_{i_\ell}^{\oplus q_\ell}[j_\ell], S_{i_{\ell+1}}^{\oplus q_{\ell+1}}[j_{\ell+1}]\}$
unless $i\coloneqq i_\ell=i_{\ell+1}$ and $j\coloneqq j_{\ell}=j_{\ell+1}-1$:
\[
\xymatrix@C=.5em{
& A \ar[rrrr] &&&& B \ar[rrrr] \ar[dll] &&&& C.
 \ar[dll]   \\
&&& S_{i}^{\oplus q_\ell}[j] \ar@{-->}[ull] &&&& S_{i}^{\oplus q_{\ell+1}}[j+1] \ar@{-->}[ull] 
}
\]
In this case, by part (b) of Lemma \ref{lem:swapHNfactors} and the Octahedral Axiom of triangulated categories, there are exact triangles
\[
\xymatrix@C=0em{
& A \ar[rrrr] &&&& \star \ar[rrrr] \ar[dll] &&&& C.
 \ar[dll]   \\
&&&  S_{i}^{\oplus (q_{\ell+1}-t)}[j_\ell+1] \ar@{-->}[ull] &&&&S_{i}^{\oplus (q_\ell-t)}[j_\ell] \ar@{-->}[ull] 
}
\]
for some $t\geq0$.
In other words, one can still exchange the order of these two terms to make the phases decreasing,
but the number of factors of $S_i[j_\ell]$ and $S_i[j_\ell+1]$ might decrease.

In conclusion, one can exchange the order of the factors in $\{S_{i_1}^{\oplus q_1}[j_1], \ S_{i_2}^{\oplus q_2}[j_2], \ldots, \  S_{i_k}^{\oplus q_k}[j_k]\}$ so that the phases are in the decreasing order to get the Harder--Narasimhan filtration of $E$, with the caveat that some of the factors might disappear during the process when exchanging two terms of the type $\{S_{i}^{\oplus q_\ell}[j],  S_{i}^{\oplus q_{\ell+1}}[j+1]\}$.
Hence
\[
\min_{1\leq\ell\leq k} \{j_\ell+\phi(S_{i_\ell})\} \leq
\phi_\sigma^-(E) \leq \phi_\sigma^+(E)
\leq \max_{1\leq\ell\leq k} \{j_\ell+\phi(S_{i_\ell})\}
\]
and
\[
m_\sigma(E) \leq \sum_{\ell=1}^k q_\ell |Z_\sigma(S_{i_\ell})|
\]
as desired.
\end{proof}

Now we prove Theorem \ref{Thm:hyperbolic}.

\begin{proof}[Proof of Theorem \ref{Thm:hyperbolic}]
Recall the \emph{stable translation length} of an isometry $f$ on a metric space $(X,d)$ is defined as
\[
\overline{\tau}(f) \coloneqq \lim_{n\ra\infty} \frac{d(x, f^n(x))}{n}.
\]
One can check by the triangle inequality
that $\overline\tau(f)$ is independent of the choice of $x\in X$,
and $\tau(f)\geq\overline\tau(f)$.
The idea of proof is to find a Bridgeland stability condition $\sigma\in\Stab^\dagger(\D(\Gamma_NA_2))$ such that
\begin{enumerate}[label=(\alph*)]
\item $\log\lambda \geq \bar{d}(\bar\sigma, \Phi\bar\sigma)$, and
\item $\lim_{n\ra\infty}\frac{1}{n}\bar{d}(\bar\sigma, \Phi^n\bar\sigma) \geq \log\lambda$.
\end{enumerate}
Here $\bar d$ is the metric on $\Stab^\dagger(\D(\Gamma_NA_2))/\C$ or $\cM_{\Stab^\dagger}(\D(\Gamma_NA_2))/\R^+$ defined in Section \ref{sec:def}.
Assuming that one can find such a stability condition $\sigma$, then we have
\[
\log\lambda \geq \bar{d}(\bar\sigma, \Phi\bar\sigma) \geq
\tau(\Phi) \geq \overline\tau(\Phi) =
\lim_{n\ra\infty}\frac{1}{n}\bar{d}(\bar\sigma, \Phi^n\bar\sigma) \geq \log\lambda.
\]
Therefore $\tau(\Phi)=\overline\tau(\Phi)=\log\lambda>0$ and the infimum $\tau(\Phi)$ is achieved by 
$\bar{d}(\bar\sigma, \Phi\bar\sigma)$.
Hence the action is hyperbolic.

We choose  the stability condition $\sigma$ as follows.
Let $M_\Phi$ be the corresponding matrix of $\Phi$ we defined in the proof of Theorem \ref{Thm:pA_quiver}:
\[
M_{\Phi} \coloneqq 
\begin{pmatrix} 
1 & 1 \\
0 & 1 
\end{pmatrix}^{a_1}
\begin{pmatrix} 
1 & 0 \\
1 & 1 
\end{pmatrix}^{b_1}
\cdots
\begin{pmatrix} 
1 & 1 \\
0 & 1 
\end{pmatrix}^{a_p}.
\]
Since $\Phi$ is neither $T_1^a$ nor $T_2^{-b}$,
the matrix $M_\Phi$ is a \emph{positive} matrix.
By Perron--Frobenius theorem, the eigenvector $v=[v_1, v_2]^T$ of the larger eigenvalue of $M^T_\Phi$
can be chosen to have positive components $v_1,v_2>0$.
We choose a Bridgeland stability condition $\sigma\in\Stab^\dagger(\D(\Gamma_NA_2))$ such that
$S_1,S_2$ and their shifts are the only indecomposable semistable objects of $\sigma$,
and their phases and central charges satisfy
$$
0<\phi(S_1)<\phi(S_2)<1, \ \ |Z(S_1)|=v_1 \ \ \mathrm{and} \ \ |Z(S_2)|=v_2.
$$
We claim that $\sigma$ chosen in this way satisfies conditions (a) and (b) above
for the metric space $\cM_{\Stab^\dagger}(\D(\Gamma_NA_2))/\R^+$,
and for $\Stab^\dagger(\D(\Gamma_NA_2))/\C$ when $N=3$.

\medskip

\noindent\emph{Proof of (a):}
We first consider the metric space $(\Stab^\dagger(\D(\Gamma_NA_2))/\C, \bar d)$.
Recall that
\begin{align*}
\bar d(\bar\sigma, \Phi\bar\sigma) & = \inf_{z\in\C} d(\sigma, \Phi\sigma\cdot z) \\
 & = \inf_{\substack{x\in\R \\ y>0}}
 \sup_{E\neq0}
\Big\{
|\phi^-_\sigma(E) - \phi^-_\sigma(\Phi E) + x|,
|\phi^+_\sigma(E) - \phi^+_\sigma(\Phi E) + x|,
|\log\frac{ym_\sigma(\Phi E)}{m_\sigma(E)}|
\Big\}.
\end{align*}

Let $E\in\D(\Gamma_NA_2)$ be any nonzero object.
We claim the following inequalities hold:
\begin{enumerate}
\item $\phi_\sigma^\pm(T_1(E))\leq\phi^\pm_\sigma(E)\leq\phi_\sigma^\pm(T_1(E))+(N-1)$.
\item $\phi^\pm_\sigma(T_2^{-1}(E))-(N-1)\leq\phi^\pm_\sigma(E)\leq\phi_\sigma^\pm(T_2^{-1}(E))$.
\end{enumerate}
We will only prove (1) as the proof of (2) is similar.
Suppose the Harder--Narasimhan filtration of $E$ is:
\[
S_{i_1}^{\oplus q_1}[j_1], \ S_{i_2}^{\oplus q_2}[j_2], \ldots, \  S_{i_k}^{\oplus q_k}[j_k],
\]
where $i_1,\ldots,i_k\in\{1,2\}$, $j_1,\ldots,j_k\in\Z$, $q_1,\ldots,q_k\in\N$, and
\[
\phi_\sigma^+(E) =  j_1 + \phi(S_{i_1}) > \cdots > j_k + \phi(S_{i_k}) = \phi_\sigma^-(E).
\]
Then
\[
T_1(E) \in \{ T_1(S_{i_1})^{\oplus q_1}[j_1],\ldots, T_1(S_{i_k})^{\oplus q_k}[j_k] \}.
\]
Recall from Lemma \ref{lem:sphericaltwist} that $T_1(S_1) = S_1[1-N]$ and $T_1(S_2)\in\{ S_2, S_1\}$. Combining with Lemma \ref{lemma:boundofphase}, we have
\[
\phi_\sigma^+(T_1(E)) \leq \max_{1\leq\ell\leq k} \{j_\ell+\phi(S_{i_\ell})\} = \phi_\sigma^+(E)
\]
and
\[
\phi_\sigma^-(T_1(E)) \geq \min_{1\leq\ell\leq k} \{j_\ell+\phi(S_{i_\ell})\}-(N-1) = \phi_\sigma^-(E) - (N-1).
\]
Similarly, one can apply $T_1^{-1}$ on $E$:
\[
T_1^{-1}(E) \in \{ T_1^{-1}(S_{i_1})^{\oplus q_1}[j_1],\ldots, T_1^{-1}(S_{i_k})^{\oplus q_k}[j_k] \}.
\]
Again by Lemma \ref{lem:sphericaltwist} and \ref{lemma:boundofphase}, we have
\[
\phi_\sigma^+(T_1^{-1}(E)) \leq \max_{1\leq\ell\leq k} \{j_\ell+\phi(S_{i_\ell})\} + (N-1)= \phi_\sigma^+(E) + (N-1)
\]
and
\[
\phi_\sigma^-(T_1^{-1}(E)) \geq \min_{1\leq\ell\leq k} \{j_\ell+\phi(S_{i_\ell})\} = \phi_\sigma^-(E).
\]
By applying $T_1$ to both inequalities, one gets
\[
\phi_\sigma^-(T_1(E)) \leq \phi_\sigma^-(E) \leq \phi_\sigma^+(E) \leq \phi_\sigma^+(T_1(E)) + (N-1).
\]
This proves the inequalities in (1).
The proof of inequalities in (2) is similar.
Hence
\[
\Big|\phi_\sigma^\pm(E)-\phi_\sigma^\pm(T_1(E))-\frac{N-1}{2}\Big| \leq \frac{N-1}{2} \text{ and }
\Big|\phi_\sigma^\pm(E)-\phi_\sigma^\pm(T_2^{-1}(E))+\frac{N-1}{2}\Big| \leq \frac{N-1}{2}
\]
for any nonzero object $E\in\D$.

Let $\Phi=T_1^{a_1}T_2^{-b_1}T_1^{a_2}T_2^{-b_2}\cdots T_1^{a_p}$ for some $a_i,b_i\geq0$.
Define
\[
s\coloneqq\sum_{i=1}^p a_i + \sum_{i=1}^{p-1} b_i \text{ \ and \ } x\coloneqq\sum_{i=1}^{p-1} b_i - \sum_{i=1}^p a_i.
\]
By the above inequalities, we have
\[
\Big|\phi^\pm_\sigma(E) - \phi^\pm_\sigma(\Phi E) + \frac{x(N-1)}{2} \Big| \leq \frac{s(N-1)}{2}
\]
for any nonzero object $E$.
Therefore, under the assumption that $\log\rho(M_\Phi)\geq\frac{N-1}{2}(\sum a_i+\sum b_i)$, we have 
\[
\Big|\phi^\pm_\sigma(E) - \phi^\pm_\sigma(\Phi E) + \frac{x(N-1)}{2} \Big| \leq \log\lambda.
\]
%
Hence, in order to prove statement (a), i.e. that  $\bar{d}(\bar\sigma, \Phi\bar\sigma)\leq\log\lambda$,
it remains to show that
\[
m_\sigma(\Phi E) \leq \lambda m_\sigma(E) \ \ \mathrm{and} \ \ 
m_\sigma(\Phi^{-1} E) \leq \lambda m_\sigma(E)
\]
for any nonzero object $E$.
Suppose the Harder--Narasimhan filtration of $E$ has $x_1$ $S_1$'s and $x_2$ $S_2$'s (up to shifts).
Write $x=(x_1\  x_2)^T$ as a vector.
Then the mass of $E$ is given by $m_\sigma(E)=v^Tx$ since $|Z_\sigma(S_1)|=v_1$ and $|Z_\sigma(S_2)|=v_2$.
By Lemma \ref{lemma:boundofphase}, we have
\[
m_\sigma(\Phi E) \leq v^TM_\Phi x = \lambda v^Tx=\lambda m_\sigma(E).
\]
To show that $m_\sigma(\Phi^{-1} E) \leq \lambda m_\sigma(E)$, recall from Lemma \ref{lem:sphericaltwist} that
\[
T_1^{-1}(S_1) = S_1[N-1] \text{ \ and \ } T_1^{-1}(S_2) \in \{ S_1[N-2], S_2 \},
\]
\[
T_2(S_1)\in \{ S_1, S_2[2-N]\}  \text{ \ and \ }  T_2(S_2) = S_2[1-N].
\]
Hence $T_1^{-1}$ corresponds to the matrix
$
\begin{pmatrix}
1 & 1 \\
0 & 1
\end{pmatrix}
$
in the sense that if
\[
F \in \{ S_{i_1}^{\oplus q_1}[j_1], \ S_{i_2}^{\oplus q_2}[j_2], \ldots, \  S_{i_k}^{\oplus q_k}[j_k] \},
\]
where there are $x_1$ $S_1$'s and $x_2$ $S_2$'s 
among $\{ S_{i_1}^{\oplus q_1}[j_1], \ S_{i_2}^{\oplus q_2}[j_2], \ldots, \  S_{i_k}^{\oplus q_k}[j_k] \}$
(here $S_i^{\oplus q}$ counts as $q$ $S_i$'s),
then $T_1^{-1}(F)$ can be written as
\[
T_1^{-1}(F) \in \{ S_{c_1}^{\oplus p_1}[d_1], \ S_{c_2}^{\oplus p_2}[d_2], \ldots, \  S_{c_\ell}^{\oplus p_\ell}[d_\ell] \},
\]
where there are $x_1+x_2$ $S_1$'s and $x_2$ $S_2$'s among
$\{ S_{c_1}^{\oplus p_1}[d_1], \ S_{c_2}^{\oplus p_2}[d_2], \ldots, \  S_{c_\ell}^{\oplus p_\ell}[d_\ell] \}$.
Similarly,  $T_2$ corresponds to the matrix
$
\begin{pmatrix}
1 & 0 \\
1 & 1
\end{pmatrix}.
$
Hence the corresponding matrix of $\Phi^{-1}=T_1^{-a_p}T_2^{b_{p-1}}\cdots T_2^{b_1}T_1^{-a_1}$ is
\[
M_{\Phi^{-1}} = 
\begin{pmatrix} 
1 & 1 \\
0 & 1 
\end{pmatrix}^{a_p}
\cdots
\begin{pmatrix} 
1 & 0 \\
1 & 1 
\end{pmatrix}^{b_1}
\begin{pmatrix} 
1 & 1 \\
0 & 1 
\end{pmatrix}^{a_1}=M_\Phi
\]
which coincides with $M_\Phi$ since $\Phi$ is assumed to be palindromic.
Hence by Lemma \ref{lemma:boundofphase},
\[
m_\sigma(\Phi^{-1} E) \leq v^TM_{\Phi^{-1}} x = v^TM_\Phi x = \lambda v^Tx=\lambda m_\sigma(E).
\]
This concludes the proof of (a) for the metric space $(\Stab^\dagger(\D(\Gamma_NA_2))/\C, \bar d)$ under the assumption that $\log\rho(M_\Phi)\geq\frac{N-1}{2}(\sum a_i+\sum b_i)$.
The proof for $(\cM_{\Stab^\dagger}(\D(\Gamma_NA_2))/\R^+,\bar d)$ is even simpler because we only need to deal with the mass term, and we do not have to use the extra assumption on $\log\rho(M_\Phi)$.

\medskip

\noindent\emph{Proof of (b):}
We claim that
\[
m_\sigma(\Phi^n S_1) = \lambda^n m_\sigma(S_1) = m_\sigma(\Phi^{-n}S_1)
\]
for any $n\in\N$.
By Lemma \ref{lem:sphericaltwist},
\[
\Phi^n (S_1)\in \{ S_{i_1}^{\oplus q_1}[(N-1)j_1], S_{i_2}^{\oplus q_2}[(N-1)j_2], \ldots, S_{i_k}^{\oplus q_k}[(N-1)j_k] \}
\]
for some $i_1,\ldots,i_k\in\{1,2\}$, $j_1,\ldots,j_k\in\Z$, and $q_1,\cdots, q_k\in\N$.
By Lemma \ref{lem:swapHNfactors}, one can reorder the sequence to get the Harder--Narasimhan filtration of $\Phi^n (S_1)$ since $N\geq3$.
Therefore
\[
m_\sigma(\Phi^n S_1) =  v^TM_\Phi^n \begin{pmatrix}1\\0\end{pmatrix} = \lambda^n v^T\begin{pmatrix}1\\0\end{pmatrix} = \lambda^n m_\sigma(S_1).
\]
On the other hand, again by Lemma \ref{lem:sphericaltwist}, we have
\[
\Phi^{-n} (S_1)\in \{ S_{1}^{\oplus p_1}[(N-1)i_1], S_{2}^{\oplus q_1}[(N-1)j_1+1], \ldots,
S_1^{\oplus p_k}[(N-1)i_k],   S_{2}^{\oplus q_k}[(N-1)j_k+1]  \}
\]
for some $p_1,q_1,\ldots,p_k,q_k\geq0$ and $j_1,\ldots,j_k\in\Z$.
(The degree shifts of $S_1$'s are divisible by $N-1$, while the degree shifts of $S_2$'s are equal to $1$ modulo $N-1$.)
By Lemma \ref{lem:swapHNfactors}, one can reorder the sequence to get the Harder--Narasimhan filtration of $\Phi^{-n} (S_1)$ since $N\geq3$.
Using the assumption that $\Phi$ is palindromic, we have
\[
m_\sigma(\Phi^{-n} S_1) = v^TM_{\Phi^{-1}}^n \begin{pmatrix}1\\0\end{pmatrix}  =  v^TM_\Phi^n \begin{pmatrix}1\\0\end{pmatrix} = \lambda^n v^T\begin{pmatrix}1\\0\end{pmatrix} = \lambda^n m_\sigma(S_1).
\]
This proves the claim that $m_\sigma(\Phi^n S_1) = \lambda^n m_\sigma(S_1) = m_\sigma(\Phi^{-n}S_1)$.
Therefore
\begin{align*}
\bar d(\bar\sigma, \Phi^n\bar\sigma) & \geq \inf_{y>0} \sup_{E\neq0}
\Big\{
|\log\frac{ym_\sigma(\Phi^n E)}{m_\sigma(E)}|
\Big\} \\
 & \geq 
 \inf_{y>0} \max
 \Big\{
|\log\frac{ym_\sigma(\Phi^n S_1)}{m_\sigma(S_1)}|,
|\log\frac{ym_\sigma(S_1)}{m_\sigma(\Phi^{-n} S_1)}|
\Big\}
= n\log\lambda.
\end{align*}
Hence we have
$\lim_{n\ra\infty}\frac{1}{n}\bar{d}(\bar\sigma, \Phi^n\bar\sigma) \geq \log\lambda$.
\end{proof}

\subsection{An example from quintic Calabi--Yau threefolds} \label{eg:quintic}
We prove that the autoequivalence on the derived category of quintic Calabi--Yau threefolds $X$ considered by the first author in \cite{Fan1} and Ouchi in \cite{Ouchi} is weak pseudo-Anosov.
The existence of Bridgeland stability conditions on $\D^b(X)$ has been established by Li \cite{Li} recently.
There is a distinguished connected component of $\Stab(\D^b(X))$ containing \emph{geometric stability conditions} for which skyscraper sheaves are stable and of the same phase.
We choose $\Stab^\dagger(\D)$ in Assumption (B) to be the distinguished connected component.

\begin{Thm} \label{Thm:pA_quintic}
Let $X$ be a quintic Calabi--Yau hypersurface in $\C\P^4$.
Then
\[
\Phi\coloneqq T_\O\circ(-\otimes\O(-1))
\]
is a weak pseudo-Anosov autoequivalence on $\D^b(X)$.
Here $T_\O$ denotes the spherical twist with respect to the structure sheaf $\O_X$.
\end{Thm}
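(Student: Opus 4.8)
The plan is to run exactly the same strategy as in the $A_2$-quiver case (Theorem~\ref{Thm:pA_quiver}), now using the distinguished geometric component $\Stab^\dagger(\D^b(X))$ constructed by Li. First I would record the action of $\Phi = T_\O\circ(-\otimes\O(-1))$ on the numerical Grothendieck group $\mathcal{N}(X)$, which for a quintic threefold has rank $4$ with basis given (say) by the Chern characters of $\O_X, \O_H, \O_L, \O_p$ (a point). Twisting by $\O(-1)$ is unipotent, and the spherical twist $T_\O$ along the spherical object $\O_X$ acts by the reflection $E \mapsto E - \chi(\O_X,E)\,\O_X$ (using that $X$ is Calabi--Yau threefold so $\Hom^\bullet(\O_X,\O_X) = \C\oplus\C[-3]$ and the Euler pairing is as expected). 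Composing, $\Phi_*$ is an explicit integer matrix in $\mathrm{Aut}(\mathcal{N}(X))$; this is precisely the matrix already computed by Fan \cite{Fan1} and Ouchi \cite{Ouchi}, and one reads off that its spectral radius $\lambda$ satisfies $\lambda > 1$, with the characteristic polynomial having no root on the unit circle of the relevant eigenvector being integral (the key point established in those papers is that $\Phi$ has positive categorical entropy $\log\lambda$, with $\lambda$ the largest eigenvalue of $\Phi_*$).

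Next I would produce the lower bound on mass growth. For any $\sigma\in\Stab^\dagger(\D^b(X))$ and any nonzero $E$, the mass satisfies $m_\sigma(\Phi^n E) \ge |Z_\sigma(\Phi^n E)| = |Z_\sigma(\Phi_*^n [E])|$. Choosing $\sigma$ generically within the component so that $\ker Z_\sigma$ contains no eigenvector of $\Phi_*$ (possible since $\Stab^\dagger$ surjects locally onto $\Hom(\mathcal{N}(X),\C)$ by Bridgeland's local homeomorphism), the vector $\Phi_*^n[E]$ grows like $\lambda^n$ unless $[E]$ lies in the span of the eigenspaces for eigenvalues of modulus $<\lambda$; but $[E]$ is an integral class, so if the subdominant eigenvalues are irrational (or more precisely if no nonzero integral vector lies in that subspace — exactly the condition verified in \cite{Fan1, Ouchi}) this is impossible, forcing
\[
\limsup_{n\to\infty}\frac1n\log m_\sigma(\Phi^n E) \ge \log\lambda
\]
for \emph{every} nonzero $E$. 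For the matching upper bound I would invoke Theorem~\ref{Thm:Ikeda}: with $G$ a split generator, $\limsup_n \frac1n\log m_\sigma(\Phi^n E) \le \limsup_n\frac1n\log m_\sigma(\Phi^n G) \le h_\cat(\Phi) = \log\lambda$, the last equality being the entropy computation of Fan and Ouchi (which in turn uses that $h_\cat(\Phi)$ equals $\log$ of the spectral radius of $\Phi_*$ for this autoequivalence, e.g.\ via the Gromov--Yomdin-type lower bound $h_\cat \ge \log\rho(\Phi_*)$ together with an upper bound from an explicit resolution of $\Phi^n G$).

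Combining the two bounds gives $h_{m_\sigma,\Phi}(E) = \log\lambda > 0$ for all nonzero $E$, so by Proposition~\ref{prop:sameconnectedcomponent} the growth filtration on $\Stab^\dagger(\D^b(X))$ has the single step $0\subset \D_\lambda = \D^b(X)$ with $\lambda>1$, which is exactly Definition~\ref{Def:pAauto}. The main obstacle I anticipate is not the linear algebra (that is inherited from \cite{Fan1, Ouchi}) but verifying that the lower bound $m_\sigma(\Phi^n E)\ge |Z_\sigma(\Phi_*^n[E])|$ can be upgraded to capture the dominant eigenvalue for \emph{all} classes $[E]$ — i.e.\ ruling out that an integral class could lie in the subdominant eigenspace, and ensuring the genericity deformation of $\sigma$ stays inside $\Stab^\dagger(\D^b(X))$; this is where one genuinely uses the arithmetic of the characteristic polynomial of $\Phi_*$ computed on a quintic, rather than any soft argument.
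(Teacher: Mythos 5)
Your proposal follows essentially the same route as the paper: the upper bound on mass growth comes from Theorem~\ref{Thm:Ikeda} together with the entropy computation $h_\cat(\Phi)=\log\rho([\Phi])$ from \cite{Fan1}, and the matching lower bound comes from $m_\sigma(\Phi^nE)\geq|Z_\sigma(\Phi^nE)|$ after deforming $\sigma$ so the central charge does not kill the dominant eigenvector, with the crucial arithmetic input being that no nonzero integral class lies in the span of the subdominant eigenvectors. The only cosmetic difference is that the paper carries out that last eigenvector computation itself (in Appendix~\ref{CY3}, working in $H^{\mathrm{ev}}(X;\C)$) rather than citing it from \cite{Fan1,Ouchi}.
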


\begin{proof}
Take any $\sigma\in\Stab^\dagger(\D^b(X))$.
Let $G\in\D^b(X)$ be a split generator and $E\in\D^b(X)$ be any nonzero object.
By Theorem \ref{Thm:Ikeda}, we have
\[
\limsup_{n\ra\infty} \frac{1}{n} \log |Z_\sigma(\Phi^nE)| \leq
\limsup_{n\ra\infty} \frac{1}{n} \log m_\sigma(\Phi^nE) \leq
\limsup_{n\ra\infty}\frac{1}{n}\log m_\sigma(\Phi^n G) \leq h_\cat(\Phi).
\]
Here $h_\cat(\Phi)$ denotes the categorical entropy of $\Phi\in\Aut(\D^b(X))$.
By \cite[Remark 4.3]{Fan1}, we have
\[
  h_\cat(\Phi) = \log\rho([\Phi]).
\]

Let $\lambda_1, \lambda_2, \lambda_3, \lambda_4$ be the eigenvalues of $[\Phi]\in\Aut(H^\mathrm{ev}(X;\C))$
in which $\lambda_1=\rho([\Phi])>1$, and let $v_1,v_2,v_3,v_4\in H^\mathrm{ev}(X;\C)$ be the corresponding eigenvectors.
By \cite{Bri}, one can deform the stability condition $\sigma$ if needed so that $Z_\sigma(v_1)\neq0$. Then we have 
\[
\limsup_{n\ra\infty} \frac{1}{n} \log |Z_\sigma(\Phi^nE)| = \log\rho([\Phi]) \  \Longleftrightarrow \ 
[E]\notin \mathrm{Span}\{ v_2, v_3, v_4\}.
\]
One can check by brute force computations that the span of $v_2,v_3,v_4$ does not contain any nonzero rational vectors, see
 Appendix \ref{CY3}.
This proves that the mass growth
\[
\limsup_{n\ra\infty} \frac{1}{n} \log m_\sigma(\Phi^nE) = \log \lambda_1 \approx 2.04 >0
\]
for any object $E$ with $[E]\neq0$ is the same positive number.
Hence $\Phi=T_\O\circ(-\otimes\O(-1))$ is weak pseudo-Anosov.
\end{proof}

Note that the same argument can be used to show that the autoequivalence $\Phi=T_\O\circ(-\otimes\O(-1))$ is weak pseudo-Anosov on $\D^b(X)$ for any Calabi--Yau manifold $X$ of odd dimension,
assuming the existence of Bridgeland stability conditions on $\D^b(X)$,
and assuming that there is no integral vector in the span of the eigenvectors of eigenvalues other than the largest one.

\subsection{Other examples}

We show that on the derived category of curves,
the notion of pseudo-Anosov autoequivalences coincides with DHKK pseudo-Anosov autoequivalences.

\begin{Prop}\label{Prop:curve}
Let $\D=\D^b(C)$ be the derived category of a smooth projective curve $C$ over $\C$.
Then $\Phi\in\Aut(\D)$ is pseudo-Anosov if and only if it is DHKK pseudo-Anosov.
\end{Prop}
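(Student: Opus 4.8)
The easy half is immediate from Theorem~\ref{Thm:more_general}, so the content is that a pseudo-Anosov autoequivalence of $\D^b(C)$ is DHKK pseudo-Anosov; in fact the plan is to characterize both classes explicitly by reducing to linear algebra on the numerical Grothendieck group. Two structural inputs drive this. First, $\Stab(\D^b(C))$ is connected — so it equals $\Stab^\dagger(\D^b(C))$ — and is a single \emph{free, transitive} orbit of $\widetilde{\GL^+(2,\R)}$ (Bridgeland for $g=1$, Okada for $g=0$, Macr\`i for $g\geq 2$). Second, the numerical Grothendieck group $\Gamma\cong\Z^2$ has rank $2$, and $\Phi$ acts on it by a matrix $[\Phi]\in\GL(2,\Z)$ preserving the Euler pairing; when $g(C)=1$ the pairing is symplectic, so $[\Phi]\in\SL(2,\Z)$, while for $g(C)\neq 1$ the Bondal--Orlov description $\Aut(\D^b(C))\cong\Z[1]\times(\Aut(C)\ltimes\Pic(C))$ (together with the fact that $\Aut(C)$ acts trivially on $\Gamma$) forces $[\Phi]=\pm\left(\begin{smallmatrix}1&0\\ \ell&1\end{smallmatrix}\right)$, hence $\rho([\Phi])=1$.

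The second step is to fix any $\sigma\in\Stab(\D^b(C))$. By transitivity there is a unique $g\in\widetilde{\GL^+(2,\R)}$ with $\Phi\cdot\sigma=\sigma\cdot g$, and (comparing central charges through the $\R$-linear isomorphism $Z_\sigma\colon\Gamma\otimes\R\xrightarrow{\sim}\R^2$, which is an isomorphism since $\operatorname{rk}\Gamma=2$) the matrix $\pi(g)$ is conjugate to $[\Phi]$; moreover $\Phi$ carries $\sigma$-semistable objects to $\sigma$-semistable objects preserving the order of phases. Hence, exactly as in the proof of Theorem~\ref{Thm:more_general}, the Harder--Narasimhan factors of $\Phi^nE$ are the $\Phi^n$ of the factors $A_1,\dots,A_k$ of $E$, and for any norm on $\Gamma\otimes\R$,
\[
h_{m_\sigma,\Phi}(E)=\max_{1\le i\le k}\ \limsup_{n\to\infty}\tfrac1n\log\big\|[\Phi]^n[A_i]\big\|,
\]
the essential point being that each $[A_i]\in\Gamma$ is a \emph{nonzero} lattice vector (a nonzero semistable object has nonzero central charge).

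The conclusion then follows by a dichotomy on $[\Phi]$. If $\rho([\Phi])=1$ — automatic for $g(C)\neq1$, and equivalent to $|\operatorname{tr}[\Phi]|\le 2$ when $g(C)=1$ — then $\|[\Phi]^{\pm n}v\|$ grows subexponentially for every $v\ne0$, so $h_{m_\sigma,\Phi}(E)=0$ for all nonzero $E$; thus $\Phi$ is not pseudo-Anosov, and not DHKK pseudo-Anosov either, since $\pi(g)\sim[\Phi]$ can never be conjugated to $\operatorname{diag}(r,r^{-1})$ with $|r|>1$. If instead $g(C)=1$ and $|\operatorname{tr}[\Phi]|>2$, then $[\Phi]$ has real eigenvalues $\mu,\mu^{-1}$ with $|\mu|>1$, and (as $\mu\notin\Q$) its $\mu^{-1}$-eigenline is irrational; being nonzero and integral, no $[A_i]$ lies on it, so $\limsup_n\frac1n\log\|[\Phi]^n[A_i]\|=\log|\mu|$ for each $i$, giving $h_{m_\sigma,\Phi}(E)=\log|\mu|>0$ for \emph{every} nonzero $E$ — i.e.\ $\Phi$ is pseudo-Anosov with stretch factor $|\mu|$. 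Finally, since the $\Aut$- and $\widetilde{\GL^+(2,\R)}$-actions commute and the orbit is transitive, $\{\pi(g(\Phi,\sigma)):\sigma\in\Stab(\D^b(C))\}$ is the entire $\GL^+(2,\R)$-conjugacy class of $[\Phi]$, which in the hyperbolic case contains $\operatorname{diag}(\mu,\mu^{-1})$; choosing the $\sigma$ realizing it shows $\Phi$ is DHKK pseudo-Anosov.

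The step I expect to require the most care is upgrading ``$h_{m_\sigma,\Phi}(E)\le\log|\mu|$'' — which is automatic from $\rho([\Phi])=|\mu|$ — to the \emph{equality} for \emph{all} nonzero $E$: pseudo-Anosovness demands one common positive growth rate, so objects growing strictly slower must be excluded, and this is exactly where rank-$2$-ness and the integrality of Harder--Narasimhan classes enter, through the irrationality of the contracting eigendirection of a hyperbolic element of $\SL(2,\Z)$ — the categorical counterpart of the fact that the stable foliation of a pseudo-Anosov map has irrational slope. A secondary point to check carefully is the genus-$0$ case of the first input (Okada's computation $\Stab(\P^1)\cong\C^2$); alternatively one can bypass genus $0$ by noting that there $h_\cat(\Phi)=\log\rho([\Phi])=0$ and invoking Theorem~\ref{Thm:Ikeda} to rule out pseudo-Anosovness.
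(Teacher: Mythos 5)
Your proposal is correct, but it takes a genuinely different route from the paper. The paper's proof is a three-line citation argument: by Kikuta's Propositions 4.13--4.14, an autoequivalence of $\D^b(C)$ is DHKK pseudo-Anosov if and only if $h_\cat(\Phi)>0$; by Theorem~\ref{Thm:Ikeda}, pseudo-Anosov implies $h_\cat(\Phi)>0$; and Theorem~\ref{Thm:more_general} gives the converse implication. Your argument instead re-derives the substance of Kikuta's propositions from scratch: the free transitive $\widetilde{\GL^+(2,\R)}$-action on $\Stab(\D^b(C))$ for $g\geq1$, the identification $\pi(g)=Z_\sigma[\Phi]Z_\sigma^{-1}$, the Bondal--Orlov description forcing $\rho([\Phi])=1$ when $g\neq1$, and, in the hyperbolic elliptic-curve case, the irrationality of the contracting eigenline of an element of $\SL(2,\Z)$ to upgrade the upper bound $\log|\mu|$ to an equality on every nonzero HN class. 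What your approach buys is an explicit classification (for $g=1$, $\Phi$ is pseudo-Anosov iff $|\operatorname{tr}[\Phi]|>2$, with stretch factor the spectral radius) and a transparent reason why the two notions coincide in rank $2$ -- namely the categorical analogue of the irrationality of the stable foliation -- at the cost of length and of importing the full computation of $\Stab(\D^b(C))$; the paper's proof is shorter but opaque, outsourcing all content to Kikuta and Ikeda. The one place where your write-up needs the adjustment you yourself flag is $g=0$: $\Stab(\P^1)\cong\C^2$ is \emph{not} a single $\widetilde{\GL^+(2,\R)}$-orbit, so the transitivity input fails there; your fallback (for $\P^1$ one has $h_\cat(\Phi)=\log\rho([\Phi])=0$, so Theorem~\ref{Thm:Ikeda} rules out pseudo-Anosov, and DHKK pseudo-Anosov is excluded because it would imply pseudo-Anosov by Theorem~\ref{Thm:more_general}) is valid and closes the gap, and is in fact exactly the mechanism the paper's own proof uses for all $g\neq1$.
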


\begin{proof}
By Kikuta \cite[Proposition 4.13, 4.14]{Kikuta}, in the case when $\D$ is the derived category of a curve, $\Phi\in\Aut(\D)$ is DHKK pseudo-Anosov if and only if its categorical entropy $h_\cat(\Phi)>0$.
On other hand, by Theorem \ref{Thm:Ikeda}
an autoequivalence $\Phi$ is pseudo-Anosov implies $h_\cat(\Phi)>0$.
The proposition then follows from Theorem \ref{Thm:more_general}.
\end{proof}


\section{Pseudo-Anosov maps as pseudo-Anosov autoequivalences}
\label{sec_padiff}

In this section we discuss two ways in which a pseudo-Anosov map can induce a pseudo-Anosov autoequivalence on the Fukaya category of the surface.
The first is based on the $\mathbb Z$-graded Fukaya category and stability conditions constructed from quadratic differentials, while the second is based on the $\mathbb Z/2$-graded Fukaya category and the mass function defined as complexity with respect to a generator. 
In the second case we cannot quite prove that the induced autoequivalence is pseudo-Anosov, but provide strong evidence for this.
Throughout, we assume that surfaces are oriented and maps orientation preserving.

\subsection{$\mathbb Z$-graded Fukaya category}

Let $S$ be a closed surface with a finite set $M\subset S$ of marked points and suppose
$f\colon S\setminus M\to S\setminus M$ is pseudo-Anosov. 
By definition this means that there is a complex structure on $S$, a meromorphic quadratic differential $\varphi$ on $S$ (not identically zero) with at worst simple poles at each point $p\in M$ and holomorphic on $S\setminus M$, and a stretch factor $\lambda>1$ such that $f$ acts on $\varphi$ by scaling $\mathrm{Re}(\sqrt{\varphi})$ by $\lambda$ and $\mathrm{Im}(\sqrt{\varphi})$ by $1/\lambda$.
Thus, if $F^u$ is the horizontal measured foliation of $\varphi$ and $F^s$ is the vertical measured foliation of $\varphi$, then $F^u$ and $F^s$ are a pair of transverse measured foliations such that the underlying foliations are preserved by $f$ and the measure for $F^u$ (resp. $F^s$) is scaled by a factor $\lambda$ (resp. $1/\lambda$) under the action of $f$.
The definition in terms of a quadratic differential is particularly natural from Bers' point of view of the Thurston classification~\cite{bers78}.

Let $Z\subset S$ be the set of zeros and poles of $\varphi$.
Note that $M\subset Z$ and $Z$ is non-empty unless $S$ is the torus and $\varphi$ is constant.
In order to avoid having to deal with this special case, we assume $Z\neq\emptyset$ from now on, though this is not essential.
Fix an arbitrary ground field $\mathbf k$, then there is a triangulated $A_\infty$-category $\mathcal F=\mathcal F(S\setminus Z,F^u,\mathbf k)$ over $\mathbf k$, the $\mathbb Z$-graded \textit{wrapped Fukaya category} of the punctured surface $S\setminus Z$ with grading foliation $F^u$ and coefficients in $\mathbf k$, see e.g. \cite{HKK} for the construction.
Moreover, according to the main theorem in \cite{HKK}, the quadratic differential $\varphi$ gives a stability condition $\sigma$ on $\mathcal F$ such that stable objects correspond to saddle connections and closed geodesic loops (with grading and $\mathbf k$-linear local system) on the flat surface $(S,|\varphi|)$.
The central charge is given by 
\[
Z([\gamma])=\int_\gamma\sqrt{\varphi}
\]
which is well-defined for 
\[
[\gamma]\in H_1(S,Z;\mathbb Z\sqrt{\varphi})\cong K_0(\mathcal F)
\]
where $\mathbb Z\sqrt{\varphi}$ denotes the local system of integer multiples of choices of $\sqrt{\varphi}$. 

Since the pseudo-Anosov map $f$ preserves the subset $Z$ and the grading foliation $F^u$, it induces an autoequivalence $f_*$ of $\mathcal F$.
From the description of the action of $f$ on $\varphi$ and the correspondence between quadratic differentials and stability conditions is clear that $f$ acts on $\sigma$ like the element $\mathrm{diag}(\lambda,1/\lambda)\in \widetilde{GL^+(2,\mathbb R)}$, i.e. $f_*$ is DHKK pseudo-Anosov.
This example was the motivation for the definition of pseudo-Anosov autoequivalence in \cite{DHKK}.

\subsection{$\mathbb Z/2$-graded Fukaya category}

The construction above has the perhaps unwanted feature that the category $\mc F$ depends on the choice of $f$ in the mapping class group.
Another approach is to consider a version of the Fukaya category of $S$ which does not depend on a grading foliation and is thus only $\mathbb Z/2$-graded.
Such categories do not admit stability conditions, so we need to use the more general notion of pseudo-Anosov autoequivalence introduced here where the mass function on the triangulated category is the complexity with respect to a generator.

Let $S$ be a closed surface of genus $g>1$ and choose a symplectic form $\omega$ on $S$.
The Fukaya category $\mathcal F=\mathcal F(S,\omega)$ of the surface $S$ is an $A_\infty$-category over the Novikov field $\mathbf k((t^{\mathbb R}))$ where $\mathbf k$ is some coefficient field of characteristic zero.
Objects of $\mathcal F$ can be taken to be immersed oriented curves $L$ which do not bound an immersed one-gon (teardrop) together with a unitary rank one local system over $\mathbf k((t^{\mathbb R}))$. 
A self-contained elementary definition of $\mathcal F$ can be found in \cite{abouzaid08}.
We assume that $\mathcal F$ has been formally completed so that its homotopy category $H^0(\mathcal F)$ is triangulated and all idempotents split.

The mapping class group $\mathrm{MCG}(S)$ of $S$ does not act naturally on $\mathcal F$, at least not without making additional choices.
Instead one considers the \textit{symplectic mapping class group} of $S$ which is the quotient $\mathrm{Symp}(S)/\mathrm{Ham}(S)$ of the group of symplectomorphisms by the normal subgroup of hamiltonian symplectomorphisms.
This group surjects onto $\mathrm{MCG}(S)$, i.e. every element of $\mathrm{MCG}(S)$ is realized by some symplectomorphism.

While $\mc F$ does not admit a stability condition, we can still use mass functions of the form $\mu(E)\coloneqq\delta(G,E)$ where $G$ is a generator, e.g. a direct sum of certain $2g$ simple closed curves, and $\delta(G,E)$ denotes the complexity of $E$ with respect to $G$ as in \cite{DHKK}.
These mass functions are all equivalent up to some constant and moreover equivalent to mass functions of the form 
\[
\mu(E)\coloneqq\dim \mathrm{Ext}^{\bullet}(G,E)=\dim\mathrm{Ext}^0(G,E)+\dim\mathrm{Ext}^1(G,E)
\]
since $\mc F$ is smooth and proper.

\begin{Conj}
Let $f$ be a pseudo-Anosov symplectomorphism of a closed symplectic surface $(S,\omega)$.
Then the induced autoequivalence $\Phi=f_*$ of the Fukaya category $\mc F(S,\omega)$ is pseudo-Anosov with respect to the mass function $\mu(E)\coloneqq\dim \mathrm{Ext}^{\bullet}(G,E)$ where $G$ is any generator of $\mc F(S,\omega)$.
Moreover, the stretch factor $\lambda>1$ of $f$ coincides with the stretch factor of the autoequivalence $f_*$.
\end{Conj}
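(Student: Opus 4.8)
The plan is to reduce the conjecture to two inputs: (i) the computation $h_{\mu,\Phi}(G)=\log\lambda$ for a generator $G$, and (ii) the irreducibility of $\Phi=f_*$ in the sense of Definition~\ref{Def:irred}. Granting these, the conjecture follows formally. First, by the axioms of a mass function together with the exactness of $\Hom^\bullet(G,-)$, any object $E$ obtained from $G$ by finitely many cones, shifts, and direct summands satisfies $\mu(\Phi^nE)\leq C_E\,\mu(\Phi^nG)$ for a constant $C_E$ independent of $n$ (this is the mechanism behind Theorem~\ref{Thm:Ikeda}), so $h_{\mu,\Phi}(E)\leq h_{\mu,\Phi}(G)=\log\lambda$ for every $E$; hence $\D_\lambda=\mathcal F$. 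Second, for every $\lambda'<\lambda$ the subcategory $\D_{\lambda'}$ is $\Phi$-invariant, thick, and triangulated, and it is proper, since it cannot contain the split generator $G$: the mass growth of $G$ is $\log\lambda>\log\lambda'$. Irreducibility then forces $\D_{\lambda'}=0$. Thus the growth filtration of $\Phi$ has the single step $0\subset\D_\lambda=\mathcal F$ with $\lambda>1$, so $\Phi$ is pseudo-Anosov with stretch factor $\lambda$, which by construction is the topological stretch factor of $f$. This is precisely the ``irreducible plus positive mass growth implies pseudo-Anosov'' principle recorded after Definition~\ref{Def:irred}.

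For (i), take $G=\bigoplus_i C_i$ to be a direct sum of finitely many essential simple closed curves, with trivial local systems, which split-generates $\mathcal F$. Since $\Phi=f_*$ acts geometrically, $\Phi^nG\cong\bigoplus_j f^n(C_j)$, so
\[
\mu(\Phi^nG)=\dim\Hom^\bullet(G,\Phi^nG)=\sum_{i,j}\dim HF^\bullet\!\big(C_i,f^n(C_j)\big).
\]
For $n\gg 0$ the curve $f^n(C_j)$ is essential and not isotopic to $C_i$; putting the two curves in minimal position there are no immersed bigons, the Floer differential vanishes, and $\dim HF^\bullet(C_i,f^n(C_j))$ equals the geometric intersection number $i(C_i,f^n(C_j))$. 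One then invokes the classical asymptotics of a pseudo-Anosov map (Thurston~\cite{Th}, and the theory of measured foliations of Fathi--Laudenbach--Po\'enaru): writing $F^s,F^u$ for the stable and unstable foliations of $f$, one has $\lambda^{-n}f^n(\beta)\to i(F^s,\beta)\,F^u$ in the space of measured laminations for every essential simple closed curve $\beta$, so that
\[
\lim_{n\to\infty}\frac{i(\alpha,f^n(\beta))}{\lambda^n}=i(F^s,\beta)\,i(\alpha,F^u)>0,
\]
the positivity holding because $F^s$ and $F^u$ are filling. Summing over the finitely many pairs $(i,j)$ gives $\mu(\Phi^nG)\asymp\lambda^n$, hence $h_{\mu,\Phi}(G)=\log\lambda$.

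The crux, and the step I expect to be the main obstacle, is (ii): showing that $\Phi=f_*$ has no proper nonzero $\Phi$-invariant thick triangulated subcategory. The expected mechanism is a geometric classification of thick subcategories of $\mathcal F(S,\omega)$: every such subcategory should be generated by the Lagrangians contained in some essential subsurface $S'\subseteq S$ (with multicurves corresponding to annular $S'$), and $S'$ should be recoverable from the subcategory. Granting this, a proper nonzero $\Phi$-invariant thick subcategory would produce an $f$-invariant proper essential subsurface up to isotopy, contradicting the fact that a pseudo-Anosov mapping class is irreducible --- it fixes no isotopy class of essential multicurve, by the Thurston classification (cf.\ Theorem~\ref{Thm:Thurston}). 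Such a classification of thick subcategories is available for wrapped Fukaya categories of punctured surfaces via their description through gentle algebras, but is not established in the closed-surface case treated here, which is why the statement remains conjectural. Unconditionally, the intersection-number computation of step (i) already shows that every object supported on an essential simple closed curve with a rank-one local system --- and hence every finite direct sum of shifts of such objects --- has mass growth exactly $\log\lambda$; so the growth filtration restricted to the geometric part of $\mathcal F(S,\omega)$ has a single step, and the remaining difficulty is to rule out cancellation in the twisted-complex differentials of possible non-geometric objects.
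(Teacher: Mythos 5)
The statement you are addressing is presented in the paper only as a \emph{conjecture}, and the paper gives no proof of it --- only the supporting observations that (a) for a generator $G$ which is a direct sum of simple closed curves and for any object $E$ supported on an essential simple closed curve, $\dim\mathrm{Ext}^{\bullet}(G,f_*^nE)$ grows with exponential rate $\lambda$ because it is computed by geometric intersection numbers, whose asymptotics under a pseudo-Anosov map are classical, and (b) since simple closed curves split-generate, $h_{\mu,\Phi}(E)\leq\log\lambda$ for every $E$, so that the conjecture amounts precisely to the claim that equality holds for all non-zero objects. Your step (i) and your closing unconditional remarks reproduce exactly this evidence, so on that portion you are aligned with what the paper actually establishes.

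Your proposal is therefore not a proof, and to your credit you say so: the reduction to irreducibility in step (ii) rests on a classification of thick triangulated subcategories of $\mathcal F(S,\omega)$ for a \emph{closed} surface --- that every proper nonzero thick subcategory is generated by Lagrangians supported in a proper essential subsurface, and that the subsurface is recoverable from the subcategory --- and no such classification is available in this setting (the gentle-algebra description you invoke applies to wrapped categories of punctured surfaces, not to the compact Fukaya category of a closed surface over the Novikov field). Without it, one cannot exclude a non-geometric object $E$ (e.g.\ an idempotent summand of a twisted complex of curves) for which cancellation makes $\dim\mathrm{Ext}^{\bullet}(G,f_*^nE)$ grow strictly slower than $\lambda^n$, i.e.\ a nonzero proper step $\D_{\lambda'}$ of the growth filtration. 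That is exactly the open content of the conjecture, so what you have written is a correct identification of the obstacle together with the known partial results, not a proof.
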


As in \cite{DHKK} one uses the fact from pseudo-Anosov theory that geometric intersection numbers $i(S_1,f^nS_2)$ grow exponentially with rate $\lambda$ to conclude that $\dim \mathrm{Ext}^{\bullet}(G,f_*^nE)$ has exponential growth rate $\lambda$, i.e. $h_{\mu,\Phi}(E)=\log\lambda$, for any object $E\in\mc F$ which comes from a simple closed curve.
Here we take $G$ to be a generator which is a direct sum of simple closed curves.
Since simple closed curves split-generate, we also know that $h_{\mu,\Phi}(E)\leq\log\lambda$ for any $E\in\mathcal F$.
The statement of the conjecture means precisely that equality holds for all non-zero objects $E$ of $\mathcal F$.

The above discussion generalizes, with some changes, to the case of punctured surfaces $S$.
In this case the symplectic form should be chosen to have infinite area, and in fact the definition of the (wrapped) Fukaya category from \cite{HKK} can be used, except for replacing $\mathbb Z$-gradings by $\mathbb Z/2$-gradings.
The category is smooth, but not proper, and the classification of indecomposable objects in terms of immersed curves from \cite{HKK} could be of use.

\section{Open questions}
\label{sec:openquestions}

We propose several open questions related to the study of pseudo-Anosov autoequivalences.

\subsection*{Alternative definitions of pseudo-Anosov autoequivalences}
In Section \ref{sec:section2def} and \ref{sec:examples}, we study pseudo-Anosov autoequivalences with respect to the mass functions given by the Bridgeland stability conditions in a fixed connected component $\Stab^\dagger(\D)\subset\Stab(\D)$.
One can use other mass functions, for instance the mass function $\delta_G$ given by the complexity with respect to a split generator $G$ (Example \ref{eg:complexity}) to define the mass growth of an object in a triangulated category, thereby defines another notion of ``pseudo-Anosov autoequivalence" (with respect to the complexity function) following the same idea as in Section \ref{pA:auto}.

\begin{Def}
\label{def:pAcomplexity}
Let $\D$ be a triangulated category with a split generator $G$.
We say an autoequivalence $\Phi\in\Aut(\D)$ is \emph{pseudo-Anosov with respect to the complexity function} if there exists $\lambda>1$ such that for any nonzero object $E$ in $D$,
\[
\limsup_{n\ra\infty}\frac{\log\delta_G(\Phi^nE)}{n} = \log\lambda>0.
\]
\end{Def}

Note that the mass growth with respect to the complexity function is independent of the choice of the split generator: if $G'$ is another split generator of $\D$, by \cite[Proposition~2.3]{DHKK}, we have
$\delta_G(\Phi^nE)\leq\delta_G(G')\delta_{G'}(\Phi^nE)$.
One can also use the mass function $\mu_G$ given by the dimensions of $\Hom^\bullet(G,-)$ in Example \ref{eg:Hom} to define yet another notion of pseudo-Anosov autoequivalences.

\begin{Def}
\label{def:pAExtdistance}
Let $\D$ be a triangulated category with a split generator $G$.
We say an autoequivalence $\Phi\in\Aut(\D)$ is \emph{pseudo-Anosov with respect to the Ext-distance function} if there exists $\lambda>1$ such that for any nonzero object $E$ in $D$,
\[
\limsup_{n\ra\infty}\frac{\log\mu_G(\Phi^nE)}{n} =
\limsup_{n\ra\infty}\frac{\log\sum_{i\in\Z}\dim\Hom_\D(G, E[i])}{n}=
\log\lambda>0.
\]
\end{Def}

It turns out that Definition \ref{def:pAcomplexity} and \ref{def:pAExtdistance} are equivalent, since there exist constants $C_1,C_2>0$ depending on $G$ such that
$C_1\delta_G(E) \leq \mu_G(E) \leq C_2\delta_G(E)$
holds for any nonzero object $E$ \cite[proof of Theorem~2.7]{DHKK}.
It is then important to determine whether the definition via complexity function (or equivalently, Ext-distance function) coincides with our previous definition via stability conditions. More precisely:

\begin{Ques}
Let $\D$ be a triangulated category with a split generator and $\Stab(\D)\neq\emptyset$.
    \begin{itemize}
        \item Is it true that the property of being a pseudo-Anosov autoequivalence does not depend on the choice of a connected component of $\Stab(\D)$?
        \item Does the notion of ``pseudo-Anosov autoequivalence" with respect to the complexity function (or equivalently the Ext-distance function) coincide with our definition of pseudo-Anosov autoequivalence (Definition \ref{Def:pAauto})?
    \end{itemize}
\end{Ques}

We prove in Appendix \ref{app:ellcurve} that both questions have affirmative answers if $\D$ is the bounded derived category of coherent sheaves on an elliptic curve (Theorem \ref{Thm:massequivalentEllcurve}). Note that it is proved by Ikeda \cite[Theorem~3.14]{Ikeda} that if a connected component $\Stab^\circ(\D)\subset\Stab(\D)$ contains an algebraic stability condition, then the mass growths with respect to $m_\sigma$ and $\delta_G$ coincide for any $\sigma\in\Stab^\circ(\D)$.
Hence both questions above also have affirmative answers for triangulated categories $\D$ with the property that each connected component of $\Stab(\D)$ contains an algebraic stability condition, e.g.~$\D^b\Coh(\P^1)$ \cite{DK1,MacriCurve,Okada}.
Examples of triangulated categories admitting algebraic stability conditions include derived categories with full strong exceptional collections, see for instance \cite[Remark 4.8, 4.9, 4.10]{DK2}, \cite{MacriExceptional}, \cite{QW}.

\subsection*{Genericity of pseudo-Anosov autoequivalences}
\begin{Ques}
Let $\D$ be a Calabi--Yau category satisfying Assumptions (A) and (B).
Are pseudo-Anosov autoequivalences ``generic" in $\Aut(\D)$?
\end{Ques}

Note that the statement is not true if one removes the Calabi--Yau condition.
For instance, the derived category $\D^b(C)$ of coherent sheaves on a smooth projective curve $C$ with genus $g(C)\neq1$ does not have any pseudo-Anosov autoequivalences by Kikuta \cite[Proposition 4.13]{Kikuta}
and Proposition \ref{Prop:curve}.

\subsection*{Irreducible autoequivalences}
In light of the Nielsen--Thurston classification of mapping class group elements, it is natural to consider the following definition.
\begin{Def} \label{Def:irred}
Let $\Phi\colon\D\ra\D$ be an autoequivalence of a triangulated category $\D$.
We say $\Phi$ is
    \begin{itemize}
        \item \emph{irreducible} if it has no $\Phi$-invariant proper thick triangulated subcategory;
        \item \emph{strongly irreducible} if there is no finite collection of proper thick triangulated subcategories that are permuted under $\Phi$.
    \end{itemize}
\end{Def}
It follows from the definition that if an autoequivalence $\Phi$ is irreducible and $h_{m_\sigma,\Phi}(E)>1$ for some object $E\in\D$ and $\sigma\in\Stab^\dagger(\D)$, then $\Phi$ is pseudo-Anosov.
Indeed, if $\Phi$ is irreducible, then the associated growth filtration of $\Phi$ can only have one step:
\[
0\subset\D_\lambda=\D.
\]
The condition that there exists an object with positive mass growth $h_{m_\sigma,\Phi}(E)>1$ implies $\lambda>1$.
Hence $\Phi$ is pseudo-Anosov.
Similarly, if an autoequivalence $\Phi$ is irreducible and has positive categorical entropy (Definition~\ref{def:catentropy}), then $\Phi$ is pseudo-Anosov with respect to the complexity function (Definition~\ref{def:pAcomplexity}).
An interesting question is whether the reverse implication holds.

\begin{Ques}
Are (DHKK) pseudo-Anosov autoequivalences (strongly) irreducible?
\end{Ques}
\begin{Ques}
Do pseudo-Anosov autoequivalences or irreducible autoequivalences with positive entropy on a triangulated category $\D$
act hyperbolically on the metric space $\Stab^\dagger(\D)/\C$ or $\cM_{\Stab^\dagger}(\D)/\R_{>0}$?
\end{Ques}

\subsection*{Other examples of pseudo-Anosov autoequivalences}
It would be interesting to find pseudo-Anosov autoequivalences on triangulated categories that are not Calabi--Yau as well. For instance, it is proved in \cite[Theorem 2.17]{DHKK} that the Serre functor on the derived category of quiver representations has positive entropy if the quiver is not of extended Dynkin type. Moreover, the Serre functor on the derived category of representations of Kronecker quiver with at least three arrows is DHKK pseudo-Anosov.
\begin{Ques}
Is the Serre functor on the derived category of quiver representations pseudo-Anosov
if the quiver is not of extended Dynkin type?
\end{Ques}

\appendix

\section{The case of elliptic curves}
\label{app:ellcurve}
Let $\D=\D^b\Coh(X)$ be the bounded derived category of coherent sheaves on an elliptic curve $X$.
We prove in this section that the notion of pseudo-Anosov autoequivalences with respect to Bridgeland stability conditions (Definition \ref{Def:pAauto}), complexity function (Definition \ref{def:pAcomplexity}), and Ext-distance function (Definition \ref{def:pAExtdistance}) are all equivalent.

Recall that $\Stab(\D)\cong\C\times\H$ is connected \cite{Bri}, hence the mass growth rates with respect to any Bridgeland stability condition on $\D$ are the same (Proposition~\ref{prop:sameconnectedcomponent}). We choose the stability condition $\sigma\in\Stab(\D)$ such that
\begin{itemize}
    \item $Z_\sigma(E)=-\deg(E)+i\cdot\mathrm{rank}(E)$.
    \item For $0<\phi\leq1$, the (semi)stable objects $P_\sigma(\phi)$ are the slope-(semi)stable coherent sheaves whose central charges lie in the ray $\R_{>0}\cdot\exp(i\pi\phi)$.
    \item For other $\phi\in\R$, define $P_\sigma(\phi)$ via the property $P_\sigma(\phi+1)=P_\sigma(\phi)[1]$.
\end{itemize}

\begin{Prop}
\label{prop:ellcurveHommassestimate}
Fix an ample line bundle $\O(1)$ on the elliptic curve $X$ and a split generator $G=\O\oplus\O(1)$ of $\D$. There exists a constant $C>0$ such that
\[
\mu_G(E)\coloneqq\sum_{i\in\Z}\dim\Hom_\D(G, E[i]) < C\cdot m_\sigma(E)
\]
holds for any object $E$ in $\D$.
\end{Prop}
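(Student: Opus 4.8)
The plan is to reduce the inequality, via Harder--Narasimhan filtrations with respect to $\sigma$, to the case of a single semistable coherent sheaf, and then to bound cohomology dimensions against $|Z_\sigma|$ by a short slope-by-slope analysis specific to the elliptic curve.

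First I would observe that $\mu_G$ is itself a mass function on $\D$ in the sense of Example~\ref{eg:Hom}: the long exact sequence obtained by applying $\Hom^\bullet(G,-)$ to a triangle $A\ra B\ra C\ra A[1]$ gives $\mu_G(B)\le\mu_G(A)+\mu_G(C)$, and $\mu_G(E[1])=\mu_G(E)$. Given $0\ne E\in\D$, take its Harder--Narasimhan filtration for $\sigma$; from the description of $\sigma$, every semistable factor is a shift $A_\ell[n_\ell]$ of a slope-semistable coherent sheaf $A_\ell$. Iterating the triangle inequality along the filtration gives $\mu_G(E)\le\sum_\ell\mu_G(A_\ell[n_\ell])=\sum_\ell\mu_G(A_\ell)$, while $m_\sigma(E)=\sum_\ell|Z_\sigma(A_\ell[n_\ell])|=\sum_\ell|Z_\sigma(A_\ell)|$ by definition. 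So it suffices to produce a constant $C_0$ with $\mu_G(F)\le C_0\,|Z_\sigma(F)|$ for every semistable coherent sheaf $F$.

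For such an $F$, put $d=\deg F$, $r=\mathrm{rank}\,F$ and write $\mu_G(F)=\theta(F)+\theta(F(-1))$, where $\theta(\mathcal F)\coloneqq h^0(\mathcal F)+h^1(\mathcal F)=\dim\mathrm{Ext}^\bullet(\O,\mathcal F)$; here I use $\mathrm{Ext}^i(\O(1),F)\cong H^i(X,F(-1))$ and that $X$ is a curve. Since $F(-1)$ is again semistable and $|Z_\sigma(F(-1))|\le\sqrt2\,(\deg\O(1)+1)\,|Z_\sigma(F)|$ by an elementary estimate (the rank is unchanged, the degree changes by a bounded multiple), it is enough to show $\theta(F)\le2|Z_\sigma(F)|$ for every semistable sheaf $F$. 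On the elliptic curve $\chi(\mathcal F)=\deg\mathcal F$ and $\omega_X\cong\O_X$, so $h^1(F)=h^0(F^\vee)$. I would then split into three cases. If $d>0$ (in particular, $F$ torsion), then $h^1(F)=0$ — trivially if $F$ is torsion, and otherwise because $F^\vee$ is semistable of negative slope and hence has no sections — so $\theta(F)=\chi(F)=d\le|Z_\sigma(F)|$. If $d<0$, then $F$ has positive rank and negative slope, so $h^0(F)=0$ and $\theta(F)=-d\le|Z_\sigma(F)|$. If $d=0$ and $F\ne0$, then $h^0(F)=h^1(F)$ and $|Z_\sigma(F)|=r$, so what is needed is the sharp bound $h^0(F)=\dim\Hom(\O_X,F)\le r$.

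The main obstacle is precisely this slope-zero estimate. I would obtain it from Atiyah's classification of vector bundles on an elliptic curve: a slope-zero semistable bundle is a direct sum of indecomposable degree-zero bundles, each of the form $F_n\otimes L$ for some $L\in\Pic^0(X)$ in Atiyah's notation and each having $h^0\le1$, so summing over the at most $r$ indecomposable summands gives $h^0(F)\le r$. (Equivalently: $\O_X$ is a simple object of the finite-length abelian category of slope-zero semistable sheaves, whose length function is the rank, so $\dim\Hom(\O_X,F)\le\ell(F)=r$.) Assembling the three cases yields $\theta(F)\le2|Z_\sigma(F)|$, hence $\mu_G(F)\le\bigl(2+2\sqrt2(\deg\O(1)+1)\bigr)|Z_\sigma(F)|$; taking $C_0$ to be this constant and feeding it into the Harder--Narasimhan reduction above gives $\mu_G(E)\le C_0\,m_\sigma(E)$ for all $E$, and since $m_\sigma(E)>0$ for $E\ne0$ any strictly larger $C$ gives the stated strict inequality.
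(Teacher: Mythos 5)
Your argument is correct and follows essentially the same route as the paper: reduce via the Harder--Narasimhan filtration (and the triangle inequality for $\mu_G$) to slope-semistable coherent sheaves, then bound $h^0+h^1$ of $F$ and of $F(-1)$ against $|Z_\sigma(F)|=\sqrt{r^2+d^2}$ by a case analysis on the degree resting on Atiyah's results for elliptic curves. The only differences are minor: the paper reduces further to stable bundles and indecomposable torsion sheaves via Jordan--H\"older filtrations and quotes Atiyah's Lemma~15 for their cohomology, whereas you stay at the semistable level using slope/Serre-duality vanishing for $d\neq 0$ and the classification of degree-zero indecomposables for $d=0$; you are also slightly more careful than the paper about allowing $\deg\O(1)>1$.
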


\begin{proof}
By the triangle inequality of $\mu_G$ (cf.~Definition~\ref{Def:Massfn}(1)), it suffices to show the inequality holds for slope-semistable coherent sheaves.
By considering Jordan--Holder filtrations of semistable coherent sheaves, one can further reduce to proving the inequality only for stable vector bundles and indecomposable torsion sheaves.

Let $E$ be an indecomposable torsion sheaf on $X$. Then $E\cong\O_X/\O_X(-sx_0)$ for some $x_0\in X$ and $s>0$. We have
\begin{align*}
    \mu_G(E) & =\dim H^0(E)+\dim H^1(E)+\dim H^0(E(-1))+\dim H^1(E(-1)) \\
    & =2s = 2m_\sigma(E).
\end{align*}

Let $E$ be a stable vector bundle on the elliptic curve $X$ of rank $r$ and degree $d$. By \cite[Lemma~15]{Atiyah} and Hirzebruch--Riemann--Roch theorem,
    \begin{itemize}
        \item if $d>0$, then $\dim H^0(E)=d$ and $\dim H^1(E)=0$;
        \item if $d=0$, then $\dim H^0(E)=\dim H^1(E)$ and both equal to $0$ or $1$;
        \item if $d<0$, then $\dim H^0(E)=0$ and $\dim H^1(E)=-d$.
    \end{itemize}
Hence we have $\dim H^0(E) + \dim H^1(E) \leq |d|+2$ for any stable vector bundle $E$.
Thus
\begin{align*}
    \mu_G(E) & =\dim H^0(E)+\dim H^1(E)+\dim H^0(E(-1))+\dim H^1(E(-1)) \\
    & \leq |d| + |d-r| + 4 \\
    & < \sqrt{r^2+d^2} + \sqrt{2(r^2+d^2)} + 4\sqrt{r^2+d^2} \\
    & = (5+\sqrt2)m_\sigma(E).
\end{align*}
\end{proof}

\begin{Thm}
\label{Thm:massequivalentEllcurve}
Let $\D=\D^b\Coh(X)$ be the bounded derived category of coherent sheaves on an elliptic curve $X$. Let $G=\O\oplus\O(1)$ be a split generator of $\D$. Then
\[
\limsup_{n\ra\infty}\frac{\log m_\sigma(\Phi^nE)}{n}=
\limsup_{n\ra\infty}\frac{\log \delta_G(\Phi^nE)}{n}=
\limsup_{n\ra\infty}\frac{\log \mu_G(\Phi^nE)}{n}
\]
holds for any nonzero object $E$ in $\D$.
Hence the notion of pseudo-Anosov autoequivalences with respect to these three mass functions are equivalent.
\end{Thm}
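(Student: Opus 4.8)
The plan is to prove both equalities by a cyclic squeeze: I will show that the three growth rates
\[
h_\nu(E)\ :=\ \limsup_{n\to\infty}\tfrac1n\log\nu(\Phi^n E),\qquad \nu\in\{m_\sigma,\ \delta_G,\ \mu_G\},
\]
satisfy $h_{m_\sigma}(E)\le h_{\delta_G}(E)\le h_{\mu_G}(E)\le h_{m_\sigma}(E)$, which forces all three to agree. Here $\Phi^nE\neq 0$ since $\Phi$ is an autoequivalence, and $m_\sigma(\Phi^nE),\delta_G(\Phi^nE),\mu_G(\Phi^nE)$ are all strictly positive because $\sigma$ is a stability condition and $G$ a split generator, so the logarithms are meaningful; by Proposition \ref{prop:sameconnectedcomponent} and the connectedness of $\Stab(\D)$ the particular choice of $\sigma$ made before the theorem is immaterial.

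For $h_{m_\sigma}(E)\le h_{\delta_G}(E)$ I would use a bound valid for \emph{any} mass function $\nu$ on $\D$ and any object $F$, namely $\nu(F)\le\delta_G(F)\cdot\nu(G)$. Indeed, given a complexity tower $0=X_0\to X_1\to\cdots\to X_k=F\oplus\star$ realizing $k=\delta_G(F)$ with cones $G[n_i]$, iterating axiom (1) of Definition \ref{Def:Massfn} along the triangles $X_{i-1}\to X_i\to G[n_i]\to X_{i-1}[1]$ gives $\nu(X_k)\le\sum_{i=1}^k\nu(G[n_i])=k\,\nu(G)$ by axiom (2), and $\nu(F)\le\nu(F\oplus\star)=\nu(X_k)$ by axiom (3). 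Taking $\nu=m_\sigma$ and $F=\Phi^nE$, dividing by $n$, applying $\log$ and $\limsup$, and using $\tfrac1n\log m_\sigma(G)\to 0$, yields the first inequality.

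For $h_{\delta_G}(E)\le h_{\mu_G}(E)$ I would invoke the comparison $\delta_G(F)\le C\,\mu_G(F)$ for nonzero $F$ from \cite[proof of Theorem~2.7]{DHKK}, which applies since $\D^b\Coh(X)$ is of finite type with the split generator $G=\O\oplus\O(1)$; the constant washes out after $\tfrac1n\log$. Finally, $h_{\mu_G}(E)\le h_{m_\sigma}(E)$ is nothing but Proposition \ref{prop:ellcurveHommassestimate}, applied to $F=\Phi^nE$. Chaining the three inequalities proves the displayed equalities, and the concluding assertion that the three notions of pseudo-Anosov autoequivalence coincide follows at once, since Definitions \ref{Def:pAauto}, \ref{def:pAcomplexity}, and \ref{def:pAExtdistance} are formulated solely through these growth rates.

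The only substantive ingredient is Proposition \ref{prop:ellcurveHommassestimate}, and its proof is the sole place where the geometry of the elliptic curve enters: Atiyah's classification of semistable sheaves together with Riemann--Roch bound $\dim\Hom^\bullet(G,F)$ linearly in the mass $m_\sigma(F)\asymp\sqrt{\mathrm{rank}(F)^2+\deg(F)^2}$ for semistable $F$, and the triangle inequality together with Jordan--Hölder filtrations then propagate this bound to arbitrary objects. I expect this to be the real obstacle to any generalization: on higher-dimensional varieties $\dim\Hom^\bullet(G,-)$ can grow strictly faster than the mass, so the inequality $\mu_G(F)\le C\,m_\sigma(F)$ fails and with it the above squeeze. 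Everything else is formal manipulation of $\limsup$'s and the mass-function axioms, with no genuine difficulty.
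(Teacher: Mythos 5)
Your proof is correct and takes essentially the same route as the paper: a cyclic squeeze $h_{m_\sigma}\le h_{\delta_G}\le h_{\mu_G}\le h_{m_\sigma}$ using the inequality $m_\sigma(F)\le m_\sigma(G)\,\delta_G(F)$ (which the paper cites from \cite{Ikeda} and you correctly reprove from the mass-function axioms), the DHKK comparison between $\delta_G$ and $\mu_G$, and Proposition~\ref{prop:ellcurveHommassestimate}. You also correctly identify that the only geometric input is Proposition~\ref{prop:ellcurveHommassestimate}.
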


\begin{proof}
By Ikeda \cite[Proposition~3.4]{Ikeda},  we have $m_\sigma(E)\leq m_\sigma(G)\delta_G(E)$ for any nonzero object $E$ in $\D$.
Combining with \cite[proof of Theorem~2.7]{DHKK} (cf.~Section~\ref{sec:openquestions}) and Proposition \ref{prop:ellcurveHommassestimate}, we obtain
\[
\limsup_{n\ra\infty}\frac{\log \delta_G(\Phi^nE)}{n}=
\limsup_{n\ra\infty}\frac{\log \mu_G(\Phi^nE)}{n}\leq
\limsup_{n\ra\infty}\frac{\log m_\sigma(\Phi^nE)}{n}\leq
\limsup_{n\ra\infty}\frac{\log \delta_G(\Phi^nE)}{n}
\]
for any nonzero object $E$.
This concludes the proof.
\end{proof}

\section{Basic properties of mass functions} \label{Append:Mass}
We defined the notion of mass functions on triangulated categories in Definition \ref{Def:Massfn}.
A nice property that mass functions have but Bridgeland stability conditions do not have is that they are functorial.

\begin{Def}
Let $F:\cC\ra\D$ be an exact functor between triangulated categories.
Let $\mu:\Ob(\D)\ra\R_{\geq0}$ be a mass function on $\D$.
Then one can define the pullback of the mass function via $F$:
\[
F^*\mu: \Ob(\cC) \ra \R_{\geq0}, \ \ \ E \mapsto \mu(F(E)),
\]
which is clearly a mass function on $\cC$.
\end{Def}

Another nice property of mass functions is that one can induce relative mass functions from mass functions.

\begin{Def}
Let $\D'$ be a thick triangulated subcategory of $\D$.
Let $\mu:\Ob(\D)\ra\R_{\geq0}$ be a mass function on $\D$.
Define the induced mass function $i_*^{\D'}\mu: \Ob(\D)\ra\R_{\geq0}$ by
$$
(i_*^{\D'}\mu)(E):=
\inf\left\{ \displaystyle
\sum_{A_i\notin\D'} \mu(A_i)
 \,\middle|\,
\begin{xy}
(0,5) *{0}="0", (20,5)*{\star}="1", (30,5)*{\dots}, (40,5)*{\star}="k-1", 
(60,5)*{E\oplus \star}="k",
(10,-5)*{A_1}="n1", (30,-5)*{\dots}, (50,-5)*{A_n}="nk",
\ar "0"; "1"
\ar "1"; "n1"
\ar@{-->} "n1";"0" 
\ar "k-1"; "k" 
\ar "k"; "nk"
\ar@{-->} "nk";"k-1"
\end{xy}
\, \right\}.
$$
\end{Def}

While it is not clear to us how to pushforward a mass function from $\D$ to a quotient $\D/\D'$,
the mass function $i_*^{\D'}\mu$ can serve as a replacement of this process.
In particular, one can see that the induced mass function $i_*^{\D'}\mu$ vanishes on $\D'$.
The proposition below shows the compatibility between the pullback of mass functions via quotient maps
and the induced relative mass functions.

\begin{Prop}
Let $\D'$ be a thick triangulated subcategory of $\D$,
and $q_{D'}: \D\ra \D/\D'$ be the quotient map.
Let $\mu:\Ob(\D/\D')\ra\R_{\geq0}$ be a mass function on $\D/\D'$.
Then
\[
i_*^{\D'} q_{\D'}^* \mu = q_{\D'}^* \mu.
\]
\end{Prop}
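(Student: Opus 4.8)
The plan is to establish the two inequalities $i_*^{\D'}q_{\D'}^*\mu\le q_{\D'}^*\mu$ and $i_*^{\D'}q_{\D'}^*\mu\ge q_{\D'}^*\mu$ pointwise on $\Ob(\D)$. For the first, I would observe that for \emph{any} mass function $\nu$ on $\D$ one has $i_*^{\D'}\nu\le\nu$: given $E\in\D$, the one-step tower $0\to E$ (i.e.\ with $\star=0$ and a single cone $A_1=E$) is one of the diagrams allowed in the definition of $i_*^{\D'}\nu$, and it contributes $\nu(E)$ to the infimum when $E\notin\D'$ and $0$ when $E\in\D'$; since $\nu\ge 0$, in either case this is $\le\nu(E)$. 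Applying this to $\nu=q_{\D'}^*\mu$ gives the first inequality. (Note $q_{\D'}^*\mu$ indeed vanishes on $\D'$, since $q_{\D'}(E)\cong 0$ for $E\in\D'$ and $\mu(0)=0$, so the relative mass function $i_*^{\D'}q_{\D'}^*\mu$ is being compared with a mass function that already vanishes on $\D'$.)

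For the reverse inequality, fix $E\in\D$ and a diagram of the shape appearing in the definition of $i_*^{\D'}$: a tower of exact triangles with top term $E\oplus\star$ and successive cones $A_1,\dots,A_n$. The Verdier quotient functor $q_{\D'}\colon\D\to\D/\D'$ is exact and additive, so applying it termwise gives a tower of the same shape in $\D/\D'$, with top term $q_{\D'}(E)\oplus q_{\D'}(\star)=q_{\D'}(E\oplus\star)$ and successive cones $q_{\D'}(A_1),\dots,q_{\D'}(A_n)$. Whenever $A_i\in\D'$ we have $q_{\D'}(A_i)\cong 0$, and a step in such a tower whose cone is the zero object is an isomorphism; deleting all these steps leaves a tower (again of the form allowed in the definition of $i_*^{\D'}$, now read in $\D/\D'$) realizing $q_{\D'}(E\oplus\star)$ with cones $\{q_{\D'}(A_i):i\in I\}$, where $I\subseteq\{\,i: A_i\notin\D'\,\}$ because $A_i\in\D'$ forces $q_{\D'}(A_i)=0$ and hence $i\notin I$. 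Iterating property~(1) of Definition~\ref{Def:Massfn} for $\mu$ along this tower, starting from $\mu(0)=0$, yields $\mu(q_{\D'}(E\oplus\star))\le\sum_{i\in I}\mu(q_{\D'}(A_i))$, while property~(3) gives $\mu(q_{\D'}(E))\le\mu(q_{\D'}(E)\oplus q_{\D'}(\star))=\mu(q_{\D'}(E\oplus\star))$. Combining these and using $\mu\ge 0$ to enlarge the index set from $I$ to $\{\,i:A_i\notin\D'\,\}$, we obtain
\[
(q_{\D'}^*\mu)(E)=\mu(q_{\D'}(E))\ \le\ \sum_{i:\,A_i\notin\D'}\mu(q_{\D'}(A_i))\ =\ \sum_{i:\,A_i\notin\D'}(q_{\D'}^*\mu)(A_i).
\]
Taking the infimum over all such diagrams for $E$ gives $(q_{\D'}^*\mu)(E)\le(i_*^{\D'}q_{\D'}^*\mu)(E)$, which together with the first inequality yields the claimed equality.

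The only step that is not a routine manipulation of the axioms is the passage to the quotient: one must check that $q_{\D'}$ carries a defining diagram for $i_*^{\D'}$ on $\D$ to a legitimate defining diagram on $\D/\D'$ — in particular that, after discarding the steps whose cones become zero, the remaining diagram still has the required shape — and that the discarded cones are precisely among those indexed by objects of $\D'$, so that no cone lying outside $\D'$ is lost. This uses only exactness of $q_{\D'}$ together with the fact that $q_{\D'}$ annihilates $\D'$; I do not expect any genuine obstacle beyond this bookkeeping.
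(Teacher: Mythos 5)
Your proof is correct and is essentially the paper's argument: both directions come down to observing that for any admissible tower the triangle inequality and nonnegativity give $(q_{\D'}^*\mu)(E)\le\sum_{A_i\notin\D'}(q_{\D'}^*\mu)(A_i)$, while the trivial one-step tower gives the opposite inequality. The only cosmetic difference is that you push the tower into $\D/\D'$ before applying the mass-function axioms, whereas the paper applies them directly in $\D$ to the pullback $q_{\D'}^*\mu$ (which is already a mass function on $\D$ vanishing on $\D'$), making your deletion-of-zero-cones bookkeeping unnecessary but harmless.
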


\begin{proof}
Since $q^*_{\D'}\mu$ vanishes on $\D'$, for any sequence of exact triangles 
\[
\xymatrix@C=.5em{
& 0 \ar[rrrr] &&&& \star \ar[rrrr] \ar[dll] &&&& \star
\ar[rr] \ar[dll] && \ldots \ar[rr] && \star
\ar[rrrr] &&&& E\oplus\star, \ar[dll]  &  \\
&&& A_1 \ar@{-->}[ull] &&&& A_2 \ar@{-->}[ull] &&&&&&&& A_n \ar@{-->}[ull] 
}
\]
we have
\[
(q_{\D'}^* \mu)(E) \leq \sum_i (q_{\D'}^* \mu)(A_i) = \sum_{A_i\notin\D'}(q_{\D'}^* \mu)(A_i).
\]
\end{proof}

\section{Computation in the quintic Calabi--Yau threefold case} \label{CY3}

We use the notations in Section \ref{eg:quintic}.
Let $H=\O(1)$ be the ample generator of $\Pic(X)$.
The powers $1,H,H^2,H^3$ form a basis of $H^{\mathrm{ev}}(X;\Q)$.
With respect to this basis, $[\Phi]$ corresponds to the following matrix
\[\left(\begin{matrix}  
6 & -\frac{20}{3} & 5 &-5\\
-1 &1&0&0\\
\frac{1}{2} &-1&1&0\\
-\frac{1}{6} & \frac{1}{2} &-1&1
\end{matrix}\right).\]
Its eigenvalues and corresponding eigenvectors are:
\begin{align*}
\lambda_1 &= \frac{1}{4} \left(   9+3 \sqrt{5} + \sqrt{110+54 \sqrt{5}}   \right) \\
\lambda_2 &= \frac{1}{4} \left(   9-3 \sqrt{5} + i\sqrt{-110+54 \sqrt{5}}   \right) \\
\lambda_3 &= \frac{1}{4} \left(   9-3 \sqrt{5} -i \sqrt{-110+54 \sqrt{5}}   \right) \\
\lambda_4 &=\frac{1}{4} \left(   9+3 \sqrt{5} - \sqrt{110+54 \sqrt{5}}   \right)
\end{align*}
and
\[v_1=\left(\begin{matrix} 
\frac{-3}{122} \left(  185 + 75 \sqrt{5} + \sqrt{ 104150 + 47070 \sqrt{5}}   \right) \\
\frac{1}{61}(60+54 \sqrt{5})\\
\frac{-3}{61}   \sqrt{ 575+426 \sqrt{5}}  \\
1
\end{matrix} \right),
v_2=\left(\begin{matrix} 
\frac{3}{122} \left(  -185 + 75 \sqrt{5} + i \sqrt{ -104150 + 47070 \sqrt{5}}   \right) \\
\frac{1}{61}(60-54 \sqrt{5})\\
\frac{-3}{61} i   \sqrt{ -575+426 \sqrt{5}}   \\
1
\end{matrix} \right)\]
\[v_3=\left(\begin{matrix} 
\frac{3}{122} \left(  -185 + 75 \sqrt{5} - i \sqrt{ -104150 + 47070 \sqrt{5}}   \right) \\
\frac{1}{61}(60-54 \sqrt{5})\\
\frac{3}{61} i   \sqrt{ -575+426 \sqrt{5}}\\
1
\end{matrix} \right), 
v_4=\left(\begin{matrix} 
\frac{-3}{122} \left(  185 + 75 \sqrt{5} - \sqrt{ 104150 + 47070 \sqrt{5}}   \right)  \\
\frac{1}{61}(60+54 \sqrt{5})\\
\frac{3}{61}   \sqrt{ 575+426 \sqrt{5}}\\
1
\end{matrix} \right)\]

The following lemma follows from straightforward but tedious computations.

\begin{Lem}
The span of $\{v_2,v_3,v_4\}$ over complex numbers $\C$ does not contain any rational vector.
\end{Lem}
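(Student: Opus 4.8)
\medskip

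The plan is to bypass the explicit coordinate computation with the unwieldy eigenvectors $v_2,v_3,v_4$, and instead use the fact that $T\coloneqq[\Phi]$ acts $\Q$-linearly on $H^{\mathrm{ev}}(X;\Q)\cong\Q^4$ with an \emph{irreducible} characteristic polynomial, so that $\Q^4$ admits no nonzero proper $T$-invariant $\Q$-subspace.

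First I would record the characteristic polynomial of $T$. From the displayed formulas for the eigenvalues one reads off $\lambda_1\lambda_4=\lambda_2\lambda_3=1$, $\lambda_1+\lambda_4=\tfrac{9+3\sqrt5}{2}$ and $\lambda_2+\lambda_3=\tfrac{9-3\sqrt5}{2}$, so over $\Q(\sqrt5)$ the characteristic polynomial factors as
\[
p(t)=\Bigl(t^2-\tfrac{9+3\sqrt5}{2}\,t+1\Bigr)\Bigl(t^2-\tfrac{9-3\sqrt5}{2}\,t+1\Bigr)=t^4-9t^3+11t^2-9t+1
\]
(alternatively one computes $p(t)$ straight from the $4\times 4$ matrix displayed above). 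I would then check that $p$ is irreducible over $\Q$: it has no rational root, since $p(\pm1)\neq0$ and the rational root theorem leaves only $\pm1$; and it has no factorization into monic rational quadratics --- the natural attempt $(t^2+at+1)(t^2+bt+1)$ forces $a+b=-9$ and $ab=9$, i.e.\ $a,b$ are the roots of $x^2+9x+9$, whose discriminant $45$ is not a square in $\Q$, and the remaining constant-term/sign cases are excluded the same way (or, more conceptually, by uniqueness of factorization in $\Q(\sqrt5)[t]$, since the two quadratic factors above are already irreducible over $\Q(\sqrt5)$).

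The structural consequence is that $\Q^4$ is a \emph{simple} module over the commutative ring $\Q[T]$: the minimal polynomial of $T$ divides the irreducible degree-$4$ polynomial $p$ and is nonconstant, hence equals $p$, so $\Q[T]\cong\Q[t]/(p(t))$ is a field and $\Q^4$ is a $1$-dimensional vector space over it; in particular $\Q^4$ has no $T$-invariant $\Q$-subspace besides $0$ and itself. Now let $W\coloneqq\mathrm{Span}_\C\{v_2,v_3,v_4\}$. The four eigenvalues are pairwise distinct ($\lambda_1,\lambda_4$ are real and irrational, while $\lambda_2,\lambda_3$ are non-real and complex-conjugate), so $T$ is diagonalizable over $\C$, $W$ is precisely the sum $\bigoplus_{j=2}^{4}\ker(T-\lambda_j)$, and thus $W$ is $T$-invariant with $\dim_\C W=3$. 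Then $W\cap\Q^4$ is a $T$-invariant $\Q$-subspace of $\Q^4$, hence equals $0$ or $\Q^4$; the latter is impossible, since $\mathrm{Span}_\C(\Q^4)=\C^4$ would then force $\C^4\subseteq W$, contradicting $\dim_\C W=3$. Therefore $W\cap\Q^4=0$, which is exactly the claim.

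I do not expect a genuine obstacle here: the whole argument rests on the elementary irreducibility of $p$ over $\Q$ together with the module-theoretic observation above, and the only slightly delicate point is ruling out the quadratic factorizations of $p$. If one prefers the hands-on route indicated in Section~\ref{eg:quintic}, one can instead write a hypothetical rational vector as $\alpha_2v_2+\alpha_3v_3+\alpha_4v_4$, expand it in the basis $1,H,H^2,H^3$, and solve the resulting linear system over $\Q(\sqrt5,i)$ to force $\alpha_2=\alpha_3=\alpha_4=0$; this is the ``tedious'' computation the statement alludes to, and the conceptual argument above is meant to replace it.
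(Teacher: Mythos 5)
Your argument is correct, and it takes a genuinely different route from the paper, which simply asserts that the lemma ``follows from straightforward but tedious computations'' (i.e.\ the brute-force linear-algebra check over $\Q(\sqrt5,i)$ that you describe in your last paragraph) without recording any details. Your replacement is cleaner and verifiable: the characteristic polynomial $t^4-9t^3+11t^2-9t+1$ is correct (it is reciprocal, with $s=t+1/t$ satisfying $s^2-9s+9=0$, which reproduces exactly the four displayed eigenvalues), it is irreducible over $\Q$, hence $\Q[T]\cong\Q[t]/(p)$ is a degree-$4$ field and $\Q^4$ is a simple $\Q[T]$-module, so the $T$-invariant $3$-dimensional subspace $W=\mathrm{Span}_\C\{v_2,v_3,v_4\}$ can only meet $\Q^4$ in $0$. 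This buys more than the paper's computation: it makes no use of the explicit eigenvectors at all, and it immediately yields the generalization alluded to at the end of Section~\ref{eg:quintic} whenever the characteristic polynomial of $[\Phi]$ on cohomology is irreducible over $\Q$. One small tightening: in ruling out quadratic factorizations you only write out the case $(t^2+at+1)(t^2+bt+1)$, and a general monic rational factorization has constant terms $c,d$ with $cd=1$ but $c\neq\pm1$ possible; your parenthetical appeal to unique factorization in $\Q(\sqrt5)[t]$ does cover all cases (after checking $110+54\sqrt5$ is not a square in $\Q(\sqrt5)$, which it is not), and an even quicker way is to note that any rational monic quadratic factor would have as roots two of the $\lambda_j$, yet no pair of the $\lambda_j$ has rational sum. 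Finally, as stated the lemma literally fails for the zero vector; what you prove, $W\cap\Q^4=0$, is the intended and the needed statement.
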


%
%
%
%
%
%
%
%
%


\end{document}